\documentclass[a4page]{article}

\usepackage[margin=1.5in]{geometry}

\usepackage{graphicx}%
\usepackage{multirow}%
\usepackage{amsmath,amssymb,amsfonts}%
\usepackage{amsthm}%
\usepackage{mathrsfs}%
\usepackage[title]{appendix}%
\usepackage{xcolor}%
\usepackage{textcomp}%
\usepackage{manyfoot}%
\usepackage{booktabs}%
\usepackage{algorithm}%
\usepackage{algorithmicx}%
\usepackage{algpseudocode}%
\usepackage{listings}%
\usepackage{mathabx} % 
\usepackage{anyfontsize}
\RequirePackage[colorlinks,citecolor=blue,urlcolor=blue,linkcolor=blue]{hyperref}
\usepackage[authoryear]{natbib}

\numberwithin{equation}{section}

\theoremstyle{thmstyleone}%

\newtheorem{theorem}{Theorem}[section]
\newtheorem{proposition}[theorem]{Proposition}
\newtheorem{assum}[theorem]{Assumption}% 
\newtheorem{cor}[theorem]{Corollary}% 
\newtheorem{lemma}[theorem]{Lemma}% 
\newtheorem{example}[theorem]{Example}%
\newtheorem{remark}[theorem]{Remark}%
\newtheorem{definition}[theorem]{Definition}%
\newtheorem{notation}[theorem]{Notation}

\newcommand{\Lp}[1]{\ensuremath{\mathbb{L}}^{#1}} 
\newcommand{\HS}[1]{\ensuremath{\left\Vert#1\right\Vert_{\mathrm{HS}}}} 
\newcommand{\HSgg}[1]{\ensuremath{\bigg\Vert#1\bigg\Vert_{\mathrm{HS}}}} 
\newcommand{\HSt}[1]{\ensuremath{\Vert#1\Vert_{\mathrm{HS}}}} 
 
\newcommand{\HSsct}[2]{\ensuremath{\langle#1,#2\rangle_{\text{HS}}}} 
\newcommand{\inte}{\thickspace\mathrm{d}} 
\newcommand{\indikator}{1\hspace{-0,9ex}1}
\newcommand{\Proj}[1]{\ensuremath{\mathrm{#1}}} 
\newcommand{\Projr}[1]{\ensuremath{\mathrm{P}_{>r}\left.\left(#1\right)\right|_{V_{>r}}}} 
\newcommand{\Projrgg}[1]{\ensuremath{\mathrm{P}_{>r}\bigg(#1\bigg)\bigg|_{V_{>r}}}}
\newcommand{\Projrt}[1]{\ensuremath{\mathrm{P}_{>r}(#1)|_{V_{>r}}}} 
\newcommand{\Projro}[1]{\ensuremath{\mathrm{P}_{>r}\left.#1\right|_{V_{>r}}}} 
\newcommand{\Projrogg}[1]{\ensuremath{\mathrm{P}_{>r}#1\bigg|_{V_{>r}}}} 
\newcommand{\tr}{\ensuremath{\mathrm{tr}}} 
\newcommand{\rank}{\ensuremath{\mathrm{rank}}} 
\newcommand{\avg}{\ensuremath{\mathrm{avg}}}
\newcommand{\spd}{\ensuremath{\mathrm{spd}}} 
\newcommand{\EW}[1]{\ensuremath{\mathbb{E}\left[#1\right]}} 
\newcommand{\EWest}[1]{\ensuremath{\mathbb{E}\big[#1\big]}}
\newcommand{\EWt}[1]{\ensuremath{\mathbb{E}[#1]}} 
\newcommand{\EWind}[2]{\ensuremath{\mathbb{E}_{#1}\left[#2\right]}} 
\newcommand{\EWcond}[2]{\ensuremath{\mathbb{E}\left[\left.#1 \right| #2\right]}} 
\newcommand{\EWcondt}[2]{\ensuremath{\mathbb{E}[#1 | #2 ]}} 
\newcommand{\Vari}[1]{\ensuremath{\mathbb{V}\hspace{-0.5pt}\mathrm{ar}\left(#1\right)}} 
\newcommand{\VariCond}[2]{\ensuremath{\mathbb{V}\hspace{-0.5pt}\mathrm{ar}\left(\left.#1\right|#2\right)}}
 
\newcommand{\PPind}[2]{\mathbb{P}_{#1}\left(#2\right)}
\newcommand{\PPindt}[2]{\mathbb{P}_{#1}(#2)}

\providecommand{\keywords}[1]
{
  \small	
  \textbf{Keywords:} #1
}

\title{Testing the rank of the spot covariance matrix of a multidimensional It\^o semi-martingale}

\author{ Janine Steck\textsuperscript{a} \\
\small{\textsuperscript{a}Institut für Mathematik, Humboldt-Universit\"{a}t zu Berlin,} \\ 
\small{\href{mailto:janine.steck@hu-berlin.de}{janine.steck{@}hu-berlin.de}}
}

\date{}

\begin{document}

\maketitle

\begin{abstract}
    This work develops a statistical test for the maximal rank of the deterministic instantaneous (or spot) covariance matrix of a continuous-time $\mathbb{R}^d$-valued It\^o semi-martingale $X(t)$ using high-frequency observations with a particular focus on the impact of an adapted drift. We explicitly account for the presence of an adapted drift process, which, as our results demonstrate, cannot be neglected, by introducing a re-centred covariance estimator instead of relying solely on a second moment estimator. Building on this estimator, we test the null hypothesis that the rank of the spot covariance matrix is at most $r<d$ for all $t$ against local alternatives in which the $(r+1)$th eigenvalue is greater than some vanishing signal detection rate. Critical values are derived in a non-asymptotic framework and can be significantly affected by a potential drift. However, the power analysis establishes asymptotic consistency for separation rates, which depend on the H\"older regularity of both the drift and the spot covariance matrix, as well as on a potential spectral gap $\underline{\lambda}_r \geq 0$ under the null hypothesis. Simulation results indicate that the covariance-based test achieves higher power across a wider range of alternatives compared to classical second moment-based procedures.
\end{abstract}

\noindent \small{\href{https://mathscinet.ams.org/mathscinet/msc/msc2020.html}{\textbf{MSC2020 subject classifications}}\textbf{:} 62G10, 62M07, 62M20, 60H10.}\\
\keywords{
empirical covariance matrix, rank test, non-parametric testing, signal detection rate, high-frequency observations, eigenvalue perturbation}

\section{Introduction}\label{sec1}
In this work we study the problem of inferring the rank of time-varying covariance matrices in a continuous-time framework. In statistics, and in particular in financial econometrics, the rank of a covariance matrix is closely linked to the number of latent risk factors driving asset price dynamics $X$ (see \cite{JacodPodolskij2013}, \cite{AITSAHALIA2017} and the references therein). We consider a $d$-dimensional continuous-time process $(X(t), \thinspace t \in [0,1])$ satisfying 
\begin{align*}
    \inte X(t) = b(t) \inte t + \sigma(t) \inte W(t), \quad t \in [0,1], 
\end{align*}
(see \eqref{eq:ItoSemi} for more details), observed at high frequency and discrete, synchronous time points $X(i/n)$ for $i = 0, \ldots, n$. Our aim is to infer the maximal rank of its \emph{instantaneous} or \emph{spot covariance matrix}. Equivalently, this corresponds to determining the minimal number of independent Brownian motions required to model the dynamics of $X$ (see \cite{Jacod2008}). In practice, high-frequency observations are available for assets such as stocks and currencies (see for instance \cite{JacodPodolskij2013}). 

\noindent\textbf{Motivation and comparison to the literature.} In mathematical finance, asset prices such as currencies and commodities are often modelled as multidimensional semi-martingales.  
This modelling framework is further justified by \cite{DelbaenSchachermayer1994}, who show that, under the assumption of \emph{no free lunch with vanishing risk} for simple integrands, the price process must be a semi-martingale. 

\indent The significance of investigating a drift term becomes apparent by considering the concept of a \emph{drift burst}. It was introduced by \cite{ChristensenKim2022Tdbh} and is defined as a short time interval in which the price paths of financial assets are locally dominated by the drift component of a continuous-time It\^o semi-martingale. The authors show that drift bursts are integral to price dynamics across equities, fixed income, currencies, and commodities. \cite{FloraReno} investigate the drift burst test statistic as a tool for detecting flash crashes, which are abrupt and severe price changes occurring over extremely short time intervals (see e.g.\ \cite{Golub2012High}), by identifying short-term V-shaped price reversals. \cite{Kolokolov02012026} introduces a non-parametric test for detecting drift bursts in pure-jump processes and apply it to identify intraday flash crashes and gradual jumps in cryptocurrency prices recorded at a high frequency. More generally, empirical evidence suggests that structural characteristics of It\^o semi-martingales often evolve gradually rater than abruptly. For instance \cite{HoffmannMichael2018Niog} show that gradual changes
in the jump characteristic of a discretely observed It\^o semi-martingale occur more often than abruptly ones. This provides additional motivation for considering smoothness assumptions on the drift component. 

\indent Moreover, \cite{Laurent_Shi_2022} state that a non-zero drift in continuous time need not result in a non-zero drift in discrete time, which is particularly relevant given that empirical data are observed discretely. While the drift term is often ignored in the high-frequency literature because diffusion dominates it asymptotically (see e.g.\ \cite{Andersen}; \cite{Barndorff-NielsenOleE.2002Eqvu}), its impact on estimators has been analysed in second-order asymptotics by \cite{BollerslevTim2020RS}.

\indent The preceding results in the literature indicate that the drift component can nevertheless play a non-trivial role in price dynamics and empirical modelling (cf.\ \cite{ChristensenKim2022Tdbh}; \cite{LAURENT2026}). Furthermore, \cite{LAURENT2026} draw on substantial empirical evidence to show that asset prices exhibit non-negligible drifts in both low-frequency (daily, weekly, monthly) and high-frequency settings. Moreover, the drift term can significantly affect volatility estimation accuracy as is confirmed by finite-sample theory (cf.\ \cite{Laurent2020Veajd}). 

\indent Our test setting is closest to that of \cite{ReissWinkelmann2023}, as we retain the high-frequency framework and signal-detection perspective while extending a Brownian martingale model to an It\^o semi-martingale model and adapt the test statistic to the re-centred estimator accordingly. This extension is non-trivial, as the fast convergence rates in rank detection, based on local averages over intervals of length $\mathcal{O}(n^{-1})$, imply that the drift component present in the Itô semi-martingale setting may contribute at the first-order asymptotic level. In addition, it also bears similarities to \cite{JacodPodolskij2013}, who study rank estimation and testing problems in joint semi-martingale models for $X(t)$ and $\sigma(t)$. However, their approach targets integer-valued rank and does not yield a clear concept of convergence rates or local alternatives as in our signal detection framework. Another connection to rank is provided by \cite{AITSAHALIA2017}, who show that the covariance structure of high-frequency asset return data exhibits a low-rank plus sparsity structure.   

\indent In addition to incorporating drift, another natural extension of the classical martingale setting would be to allow for noisy observations of the underlying process, as considered by \cite{reiss2026rank}. Despite the difference in the data structure, their testing problem is closely related to ours: they test the same null hypothesis that the rank of the spot covariance matrix is at most $r < d$ and provide non-asymptotic and asymptotic critical values. Furthermore, they achieve asymptotic consistency for a separation rate. 

\noindent\textbf{Model.} Motivated by these considerations, our work builds on and generalises the approach of them, who proposed a statistical methodology for testing the maximal rank of the spot covariance matrix $\Sigma(t)$ based on high-frequency observations under the assumption that $X(t)$ is a Brownian martingale. As a first step towards a more general continuous It\^o semi-martingale framework, we extend this setting by incorporating an adapted drift process while maintaining deterministic volatility. This leads to an It\^o semi-martingale model of the form
\begin{align} \label{eq:ItoSemi}
    \inte X(t) = b(t) \inte t + \sigma(t) \inte W(t), \quad t \in [0,1], 
\end{align}
defined on a filtered probability space $(\Omega, \mathcal{F}, (\mathcal{F}_t)_{t \geq 0}, \mathbb{P})$ with an adapted drift process $b(t) \in \mathbb{R}^d$ for all $ t \in [0,1]$ satisfying $\EWt{\int_{0}^{1}\Vert b(t) \Vert \inte t} < \infty$ and the fourth moment H\"older continuity condition, an $m$-dimensional standard Brownian motion $W(t)$, $t \in [0,1]$, and a deterministic spot volatility matrix $\sigma(t) \in \mathbb{R}^{d \times m}$ for all $t \in [0,1]$ satisfying $\int_{0}^{1} \tr(\sigma(t) \sigma(t)^{\top}) \inte t < \infty$. Define the \emph{spot covariance matrix} 
$$
    \Sigma(t) := \sigma(t)\sigma(t)^\top \in \mathbb{R}^{d \times d}, \quad t \in [0,1]. 
$$ 

\noindent\textbf{Estimator construction.} In this paper, we construct an estimator based on local averages of block-wise non-parametric covariance estimates, using discrete and synchronous high-frequency observations of the process \eqref{eq:ItoSemi}. \cite{ReissWinkelmann2023} assume $b \equiv 0$ and introduce the block-wise non-parametric \emph{second moment estimator} on blocks $I_k = [kh, (k+1)h]$, $k = 0,\ldots, h^{-1}-1$, for a small block length $h \in (0, 1]$ (with $nh, h^{-1} \in \mathbb{N}$) 
\begin{align} \label{eq:secmomest}
    \hat{\Sigma}^{kh}_{2\mathrm{nd}} := h^{-1} \sum_{i=1}^{nh} \left(\Delta X_{ik}\right)^{\otimes 2},
\end{align}
where $\Delta X_{ik} := X(kh + i/n) - X(kh + (i-1)/n)$ for $i = 1, \ldots,nh$, $k = 0, \ldots, h^{-1}-1$, and $v^{\otimes 2} := vv^\top$ for $v \in \mathbb{R}^{d}$. It is then shown that this estimates the block-averaged covariance matrix
\begin{align} \label{eq:avcov}
    \Sigma^{kh}_{\avg} := h^{-1} \int_{I_k} \Sigma(t) \inte t
\end{align}
with an error of order $\mathcal{O}_\mathbb{P}((nh)^{-1/2})$ if we assume $\Delta X_{ik} \sim \mathcal{N}(0,\int_{kh+(i-1)/n}^{kh+i/n} \Sigma(t) \inte t)$, where $i=1,\ldots, nh$ and $k = 0, \ldots, h^{-1}-1$. To explicitly account for the local covariance structure of $\Sigma$, we propose a block-wise \emph{re-centred estimator} 
\begin{align} \label{eq:covest}
    \hat{\Sigma}^{kh} := h^{-1} \sum_{i=1}^{nh} \bigg(\Delta X_{ik} - (nh)^{-1} \sum_{j=1}^{nh} \Delta X_{jk}\bigg)^{\otimes 2},
\end{align}
which mitigates drift-induced bias. 

\noindent\textbf{Testing problem and statistical insight.} Averaging spot covariance  matrices over blocks $I_k$, $k=0,\ldots,h^{-1}-1$, introduces a subtle challenge: the block-averaged covariance $\Sigma_{\avg}^{kh}$ defined in \eqref{eq:avcov} can have full rank even if $\rank(\Sigma(t)) \leq r < d$ holds for all $t \in [0,1]$. This is due to averaging spot covariance matrices with \emph{time-varying eigenspaces}. \cite{Jacod2008} emphasise that $\Sigma^{kh}_{\avg}$ provides no direct insight into the instantaneous rank of $\Sigma(t).$  However, while the $(r+1)$th eigenvalue $\lambda_{r+1}(\Sigma(t))$ may be non-zero it remains small for sufficiently small block size $h \in (0,1]$ if $\Sigma(t)$ varies smoothly in time. Our perturbation bounds show that the maximal size of $\lambda_{r+1}(\Sigma^{kh}_{\avg})$ depends not only on the H\"older regularity parameter $\beta_{\Sigma} \in (0,1]$ of $\Sigma$, but also critically on the existence of a \emph{spectral gap} $\underline{\lambda}_{r} := \inf_{t \in [0,1]} \lambda_r(\Sigma(t))$, noting that $\lambda_{r+1}(\Sigma(t)) = 0$ for all $t \in [0,1]$ under the null hypothesis, as well as on the H\"older regularity parameter $\beta_b \in (0,1]$ of the drift process $b$. Small eigenvalues $\lambda_{r+1}(\hat{\Sigma}^{kh})$ across the blocks $I_k$ should favour non-rejection of the null hypothesis, whereas large eigenvalues should lead to its rejection. Our modified test exploits this property, while paying particular attention to the choice of block sizes $h \in (0,1]$, non-asymptotic critical values and signal detection rates. 

\indent Similar to \cite{ReissWinkelmann2023, reiss2026rank}, we do not test the rank directly but construct a test statistic based on the $(r+1)$th largest eigenvalue aggregated over small blocks of size $h \in (0, 1]$. However, our statistic is adapted to the re-centred estimator, which allows us to control the effect of potential drift terms while preserving the core idea of the original test. A theoretical contribution is that the behaviour of these eigenvalues depends not only on the H\"older regularity of the covariance matrix $\Sigma$ but also on the H\"older regularity of the drift process $b$. This interplay requires a refined perturbation analysis. 

\noindent\textbf{Main contributions.} We extend the rank testing framework to It\^o semi-martingales and introduce H\"older regularity assumptions on the drift process $b$. Our analysis cannot exploit the convenient distributional properties that arise when the estimator is a convex combination of independent Wishart matrices, as the re-centring step introduces dependence. It is therefore based on a systematic decomposition of the re-centred estimator, combined with the use of conditional expectations to control its stochastic behaviour. In particular, the assumption of H\"older-regularity of the drift $b$ plays a crucial role in this control. 

\indent Under the null hypothesis, the leading convergence rates coincide with those established in Theorem~3.3 of \cite{ReissWinkelmann2023}, due to the refined control of the re-centred estimator. These rates depend not only on the regularity of the covariance matrix function $\Sigma$ and the existence of a spectral gap $\underline{\lambda}_r$, but also on the H\"older regularity of the drift process $b$, which distinguishes our setting from theirs. However, under local alternatives, the signal detection rates deteriorate compared to those obtained in Corollary 3.7 of the same reference. Moreover, in the special case of a constant drift, our method requires a larger number of observations, namely of order $\min\{n^{-1/5}, \underline{\lambda}_{r}^{-2/3} n^{-1/3}\}$ than in their approach, where even block sizes of order $n^{-1}$ are admissible. This highlights a trade-off between improved control of the test statistic reflected in smaller (numerical) critical values (cf.\ Table~\ref{tab:crit}) and statistical efficiency, as our method requires a larger sample size. 
\begin{figure}[t]
    \centering
    \includegraphics[width=\linewidth]{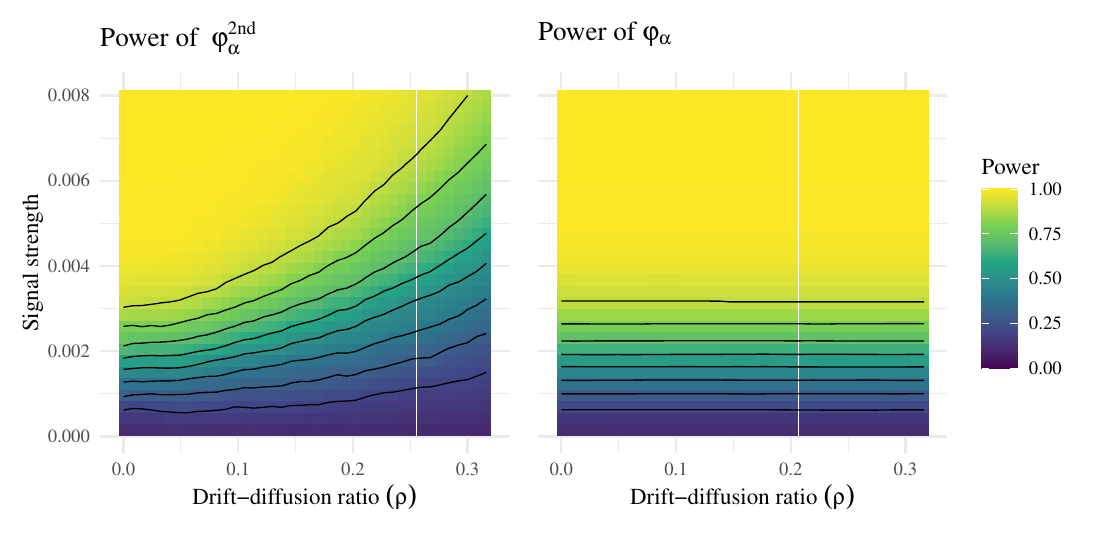}
    \caption{Power of the rank test as a function of signal strength and drift-diffusion ratio $\rho$ (see \eqref{eq:rho}) in Example~\ref{ex:3.10MLVaryingLinDrift} with $\underline{\lambda}_1 = 0.5$ and $\beta_{\Sigma} = 0.5$. \emph{Left:} Rank test at level $\alpha = 0.1$ based on the second moment estimator defined in \eqref{eq:testsec}. \emph{Right:} Rank test at level $\alpha = 0.1$ based on the covariance estimator defined in \eqref{eq:Test}. See Section \ref{sec_nu} for the parameters chosen in this simulation.}
    \label{fig:intro}
\end{figure}
Overall, the covariance-based test appears substantially more robust to increasing drift contributions in the Itô semi-martingale, maintaining a nearly unchanged power, whereas the power of the second moment-based test deteriorates markedly (see Fig \ref{fig:intro}).

\noindent\textbf{Organisation of the paper.} In this section, we continue by introducing some notation. We then present the testing problem in Section~\ref{sec:test}. The null hypothesis is formulated in Subsection~\ref{sec:h0}, while the corresponding general results on its behaviour under the null hypothesis are established in Subsection~\ref{sec:mainh0}. Results for the population version of the test statistics are stated in Subsection~\ref{sec:poph0}.
In Subsection~\ref{sec:alternative}, local alternatives are considered. Finally, Section~\ref{sec_nu} visualises the results obtained in the previous section compared to the performance of the second moment estimator. More precisely, our simulation results indicate that, in the presence of a non-zero drift, the second moment-based test exhibits a substantial loss of power relative to our covariance-based test. Proofs of the statements of Subsections~\ref{sec:mainh0} and \ref{sec:alternative} are given in \hyperref[app:proofs]{Appendix}. 

\paragraph*{Notation.} 
For all $a, b \in \mathbb{R}$, write $a \vee  b := \max\{ a, b\}$ and $a \wedge  b := \min\{ a, b\}$. Further, we define $\ldbrack a, b \rdbrack := [a,b] \cap \mathbb{N}_{0}$ for $a, \, b \in \mathbb{R}_{\geq 0}$ with $a < b$, where $\mathbb{N}_{0} := \mathbb{N} \cup \{0\}$ and $\mathbb{R}_{\geq 0} := [0, \infty)$. 

\indent Let $(a_n)_{n \geq 1}$, $(b_n)_{n \geq 1}$ be two sequences. We write $a_n \lesssim b_n$ if there exists a constant $C>0$, independent of $n$ (but possibly depending on the dimension $d$) such that $a_n \leq C b_n$ for all $n \in \mathbb{N}$ (or other parameters for the asymptotics). The notation $a_n \sim b_n$ means that $a_n \lesssim b_n$ and $b_n \lesssim a_n$. Throughout, constants may depend on the fixed dimension $d$ without explicit mention.

\indent In what follows, $\indikator_d$ denotes the identity matrix in $\mathbb{R}^{d \times d}$. We set $v^{\otimes 2} = vv^{\top} \in \mathbb{R}^{d \times d}$ for any vector $v \in \mathbb{R}^{d}$. We denote by $v \odot w$ the symmetric sum $vw^{\top} + wv^{\top}$ for any vectors $v,w \in \mathbb{R}^{d}$. We write $\mathbb{R}^{d \times d}_{\mathrm{sym}}$ and $\mathbb{R}^{d \times d}_{\spd}$ for $\{S \in \mathbb{R}^{d \times d}\, | \, S = S^\top\}$ and $\{S \in \mathbb{R}^{d \times d}_{\mathrm{sym}} \, | \, S\text{ positive semi-definite}\}$, respectively. For $A, \, B \in \mathbb{R}^{d \times d}_{\mathrm{sym}}$ the partial order $A \preccurlyeq B$ says that $B-A \in \mathbb{R}^{d \times d}_{\spd}$. Given $A \in \mathbb{R}^{d \times d}_{\mathrm{sym}}$, we denote the ordered eigenvalues by $\lambda_{\mathrm{max}}(A) := \lambda_1(A) \geq \ldots \geq \lambda_d(A) =: \lambda_{\mathrm{min}}(A)$ (according to their multiplicities), $\tr(A) := \sum_{i=1}^{d} \lambda_{i}(A)$ and the remaining $\tr_{>r}(A) := \sum_{i=r+1}^{d} \lambda_{i}(A)$. The Hilbert--Schmidt (or Frobenius) inner product between matrices $A, \, B \in \mathbb{R}^{d \times d}$ is defined as $\HSsct{A}{B} = \tr(B^{\top}A)$ with Hilbert--Schmidt norm $\HSt{A} = \HSsct{A}{A}^{1/2}$, and the spectral norm is given by $\Vert A \Vert = \max_{v \in \mathbb{R}^d, \, \Vert v \Vert = 1} \Vert Av \Vert$, where $A \in \mathbb{R}^{d \times d}$. 

\indent By $C(U;V)$ we denote the continuous functions from $U$ into $V$, where $U,\ V$ are metric spaces. Let us recall the definition of matrix-valued $\beta_{\Sigma}$-H\"older balls of radius $L_{\Sigma} > 0$ with respect to the spectral norm. For $\beta_{\Sigma} \in (0,1]$, we define
$$
    \mathcal{C}^{\beta_{\Sigma}}\left(L_{\Sigma}\right) := \left\{\left.\Sigma:[0,1] \rightarrow \mathbb{R}^{d \times d}_{\spd} \, \right| \, \Vert \Sigma(t) - \Sigma(s) \Vert \leq L_{\Sigma}|t - s|^{\beta_{\Sigma}} \text{ for all } t,\, s \in [0,1] \right\}.
$$

\section{The testing problem}\label{sec:test}
\indent We briefly recall the underlying process, the estimator of interest in our setting and the realised covariance matrix. To this end, consider the $d$-dimensional Itô semi-martingale model
\begin{align*} 
    \inte X(t) = b(t) \inte t + \sigma(t) \inte W(t), \quad t \in [0,1], 
\end{align*}
defined on a filtered probability space $(\Omega, \mathcal{F}, (\mathcal{F}_t)_{t \geq 0}, \mathbb{P})$, where $b(t) \in \mathbb{R}^d$ is an adapted drift process satisfying
$\mathbb{E}[\int_0^1 \|b(t)\| \inte t] < \infty$ and the fourth moment H\"older continuity condition, $W(t)$, $t \in [0,1]$, is an $m$-dimensional standard Brownian motion, and $\sigma(t) \in \mathbb{R}^{d \times m}$ is a deterministic spot volatility matrix satisfying
$\int_0^1 \tr(\sigma(t)\sigma(t)^\top)\inte t < \infty.$ 

\indent To construct our test, we partition the interval $[0,1]$ into $h^{-1}$ blocks of equal length $h \in (0,1]$, assuming that both $nh$ and $h^{-1}$ are integers. Each block is defined as $I_k := [kh, (k+1)h]$ for $k \in \ldbrack 0, h^{-1} - 1 \rdbrack$. 

\indent The re-centred estimator is defined by 
$$
    \hat{\Sigma}^{kh} := h^{-1} \sum_{i=1}^{nh}\bigg(\Delta X_{ik} - (nh)^{-1} \sum_{j=1}^{nh} \Delta X_{jk} \bigg)^{\otimes 2},
$$
where $\Delta X_{ik} := X(kh + i/n) - X(kh + (i-1)/n)$ for $i \in \ldbrack 1, nh \rdbrack$, $k \in \ldbrack 0, h^{-1}-1 \rdbrack$ and the realised covariance matrix is defined by 
$$
    \Sigma_{\avg}^{kh} = h^{-1} \int_{I_k} \Sigma(t) \inte t, 
$$
where $k \in \ldbrack 0, h^{-1}-1\rdbrack$. Both quantities are defined and analysed block-wise.  

\indent Our objective is to test the null hypothesis that the rank of the time-varying covariance matrix $\Sigma(t)$ does not exceed some arbitrary but fixed rank $r \in \ldbrack 0, d-1 \rdbrack$ at any time point $t \in [0,1]$, i.e.\
$$
    \mathcal{H}_0: \thickspace \max_{t \in [0,1]} \rank\left(\Sigma(t)\right) \leq r.
$$
To this end, we introduce the statistic
\begin{align}\label{eq:Statistic}
    T_{n,h}:=\sum_{k=0}^{h^{-1}-1} h \lambda_{r+1}\big(\hat{\Sigma}^{kh}\big),
\end{align}
which aggregates the $(r+1)$th largest eigenvalues of the estimated covariance matrix across all blocks. A level-$\alpha$ test $\varphi_\alpha$ with $\alpha \in (0,1)$ is then defined as
\begin{align}\label{eq:Test}
    \varphi_{\alpha} :=  \indikator\left(T_{n,h} > \kappa_{\alpha}\right),
\end{align}
where $\kappa_{\alpha}$ denotes the critical value corresponding to the desired significance level $\alpha$. Each block must contain at least $r+1$ increments; otherwise, $\lambda_{r+1}(\hat{\Sigma}^{kh}) = 0$ by definition. 

\subsection{The null hypothesis}\label{sec:h0}

We are interested in testing whether the covariance function $\Sigma : [0,1] \rightarrow \mathbb{R}^{d \times d}$ has rank at most $r$ on the interval $[0,1]$, where $r \in \ldbrack0,d-1\rdbrack$ is fixed. A natural test statistic is obtained by aggregating, over all $h^{-1}$ blocks, the $(r+1)$th largest eigenvalues of the local covariance estimator $\hat{\Sigma}^{kh}$, defined in \eqref{eq:Statistic}. 

\indent Before formally defining the null hypothesis, we first introduce a notion of Hölder regularity for the drift process $b$.

\begin{assum}\label{ass:hölderad}
    Let $b: [0,1] \rightarrow \mathbb{R}^{d}$ be an adapted drift process such that for all $0 \leq t \leq s \leq 1$, there exists a constant $L_b \geq 0$ and an exponent $\beta_b \in (0,1]$ such that
    \begin{align} \label{eq:HölderAd}
        \EW{\Vert b(s) - b(t) \Vert^4}^{1/4} \leq L_b |s-t|^{\beta_b}.
    \end{align}
    We denote by $\mathcal{H}^{\beta_b}(L_b)$ the class of all adapted $\mathbb{R}^{d}$-valued processes $b$ such that $b$ satisfies the H\"older-regularity condition \eqref{eq:HölderAd} and $\EWt{\int_{0}^{1} \Vert b(t) \Vert \inte t} < \infty$ .
\end{assum}   

\indent We now formally state the null hypothesis to be tested.

\begin{definition}\label{defi:null}
    Throughout, we consider the null hypothesis that the spot covariance matrix $\Sigma(t)$ has at most rank $r\in\ldbrack 0, d-1\rdbrack$ for all $t \in [0,1]$. Moreover, we assume H\"older-regularity conditions on $b$ and $\Sigma$, with exponents $\beta_b, \, \beta_{\Sigma} \in (0,1]$, constants $L_b \geq 0, \ L_{\Sigma} > 0$, and, potentially, a spectral gap $\underline{\lambda}_r>0$. We set 
    \begin{align*}
        \mathcal{H}_0 
        &:= \mathcal{H}_0(r, \beta_b, \beta_{\Sigma}, L_b, L_{\Sigma}) \\
        &:= \left\{(\Sigma, b) \in C \left([0,1]; \mathbb{R}^{d \times d}_{\spd}\right) \times \mathcal{H}^{\beta_b}(L_b) \, \left| \, \sup_{t\in[0,1]} \mathrm{rank}(\Sigma(t)) \leq r, \, \Sigma \in \mathcal{C}^{\beta_{\Sigma}}\left(L_{\Sigma}\right) \right. \right\}, \\
        \mathcal{H}_0^{\mathrm{gap}} &:= \mathcal{H}_0^{\mathrm{gap}}(r, \beta_{b}, \beta_{\Sigma}, L_b, L_{\Sigma}, \underline{\lambda}_r) \\
        &:= \left\{(\Sigma, b) \in C \left([0,1]; \mathbb{R}^{d \times d}_{\spd}\right) \times \mathcal{H}^{\beta_b}(L_b) \, \left| \, \sup_{t\in[0,1]} \mathrm{rank}(\Sigma(t)) \leq r, \, \Sigma \in \mathcal{C}^{\beta_{\Sigma}}\left(L_{\Sigma}\right), \right. \right.\\
        &\hspace{8cm}\qquad \left.\inf_{t \in [0,1]} \lambda_r(\Sigma(t)) \geq \underline{\lambda}_r \right\}.
     \end{align*}
\end{definition}

\newpage
\begin{remark} \leavevmode
    \begin{enumerate}
        \item[(a)] In addition to the H\"older-regularity condition on $\Sigma$ with parameters $\beta_{\Sigma}$ and $L_{\Sigma}$, we impose under the null hypothesis a spectral gap condition on the $r$th eigenvalue of the covariance matrix $\Sigma$, quantified by a lower bound $\underline{\lambda}_r > 0$. For completeness, we also allow for the degenerate case $\underline{\lambda}_r = 0$, in which no such constraint is imposed. 
        \item[(b)] As we also incorporate a drift term, minimal regularity assumptions on $b$ are natural; see Definition~\ref{defi:null}. These assumptions will play a role when deriving the non‑asymptotic critical values for our test \eqref{eq:Test}. The drift term $b$ is included in the null hypothesis solely to ensure that the model is well defined; it remains a nuisance parameter.
        \item[(c)] The influence of the parameters becomes apparent through the inclusion 
    $$
        \mathcal{H}_0^{\mathrm{gap}}(r, \beta_b, \beta_{\Sigma}, L_b,  L_{\Sigma}, \underline{\lambda}_r) \subseteq \mathcal{H}_0^{\mathrm{gap}}(r, \beta_b^{\prime}, \beta_{\Sigma}^{\prime}, L_{b}^{\prime}, L_{\Sigma}^{\prime}, \underline{\lambda}_{r}^{\prime})
    $$ 
    whenever $\beta_b \geq \beta_b^{\prime}$, $\beta_{\Sigma} \geq \beta_{\Sigma}^{\prime}$, $L_b \leq L_b^{\prime}$, $L_{\Sigma} \leq L_{\Sigma}^{\prime}$ and $\underline{\lambda}_r \geq \underline{\lambda}_{r}^{\prime}$. 
    \end{enumerate}  
\end{remark}

Before we can determine the critical values $\kappa_{\alpha}$ for the level-$\alpha$ test $\varphi_{\alpha}$, we introduce an appropriate projection. It specifies the subspace onto which the reduction is applied. Let $k \in \ldbrack 0, h^{-1} -1 \rdbrack$ and $r \in \ldbrack 0, d-1 \rdbrack$. For each block $I_k$ we consider the corresponding eigenspace
$$
    V_{>r} := \operatorname{span}(v_{r+1}, \ldots, v_d),
$$
where $v_{r+1}, \ldots, v_d$ are the eigenvectors corresponding to the $(d-r)$ smallest eigenvalues
$$
    \lambda_{r+1}\left(\Sigma_{\avg}^{kh}\right) \geq \ldots \geq \lambda_d \left(\Sigma_{\avg}^{kh}\right) \geq 0
$$
of $\Sigma_{\avg}^{kh}$. Let $\Proj{P}_{>r}$ denote the orthogonal projection onto $V_{>r}$. For any matrix
$$
    S \in \big\{\hat{\Sigma}^{kh}, \Sigma_{\avg}^{kh}, \Sigma(t)\big\},
$$
we define the $(d-r)\times(d-r)$ lower minor as
$$
    S_{>r} := \Projro{S}.
$$
This construction allows us to restrict the analysis to the relevant subspace. By the Cauchy interlacing theorem (see e.g.\ \cite{Johnstone2001} or \cite{Tao2012}), we obtain
\begin{align}\label{eq:TUB}
    T_{n,h} = \sum_{k=0}^{h^{-1}-1} h\lambda_{r+1}\big(\hat{\Sigma}^{kh}\big) \leq \sum_{k=0}^{h^{-1}-1} h \lambda_{\max}\big(\hat{\Sigma}_{>r}^{kh}\big).
\end{align}
Applying the triangle inequality then yields
\begin{align}\label{Eq:EigenvalueProjection}
    \lambda_{\max}\big(\hat{\Sigma}_{>r}^{kh}\big) = \big\Vert \hat{\Sigma}_{>r}^{kh} \big\Vert\leq \big\Vert\EWest{\hat{\Sigma}_{>r}^{kh}}\big\Vert + \big\Vert\hat{\Sigma}_{>r}^{kh} - \EWest{\hat{\Sigma}_{>r}^{kh}}\big\Vert.
\end{align}

\subsection{General results under the null hypothesis} \label{sec:mainh0}
\indent In this section we calibrate the test under the null hypothesis. To this end, we impose an assumption on the relationship between the sample size $n$ and the block length $h$, ensuring that the block length does not decrease faster than the inverse sample size.
\begin{assum}\label{ass:nh}
    We assume that $n^{-1} \lesssim h$. 
\end{assum}
\begin{remark}
    The condition $h \geq n^{-1}$ is minimal, since intervals of length $h < n^{-1}$ do not contain any local observations on the sampling scale.
\end{remark}

By Assumption \ref{ass:nh}, we derive an upper bound for the bias, as stated in the following proposition.
 
\begin{proposition}[Upper bound for the bias] \label{prop:biash0ad}
    Grant Assumption~\ref{ass:nh}. Let $k \in \ldbrack 0, h^{-1}-1\rdbrack$ and let $(\Sigma, b) \in \mathcal{H}_0$ or $(\Sigma,b)\in\mathcal{H}_0^{\mathrm{gap}}$, depending on the existence of the spectral gap. Then we obtain  
    \begin{align*}
        \sum_{k=0}^{h^{-1}-1} h \big\Vert \EWest{\hat{\Sigma}_{>r}^{kh}}\big\Vert &\lesssim \bigg(\sum_{k=0}^{h^{-1}-1} h \lambda_{r+1}\big(\Sigma_{\avg}^{kh}\big)\bigg) + L_b^2 n^{-1}h^{2\beta_b}.
    \end{align*}
\end{proposition}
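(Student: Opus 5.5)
The plan is to split each increment into a drift part and a martingale part, and to get rid of the cross terms through a positive semi-definite domination rather than by computing them. Write $t_{ik} := kh + i/n$ and
$$
\Delta X_{ik} = B_{ik} + M_{ik}, \qquad B_{ik} := \int_{t_{(i-1)k}}^{t_{ik}} b(s)\inte s, \qquad M_{ik} := \int_{t_{(i-1)k}}^{t_{ik}} \sigma(s)\inte W(s).
$$
Let $\bar B_k := (nh)^{-1}\sum_j B_{jk}$ and $\bar M_k := (nh)^{-1}\sum_j M_{jk}$. Because $b$ is only adapted, the cross terms $\EWt{(B_{ik}-\bar B_k)(M_{ik}-\bar M_k)^\top}$ need not vanish. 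For any vectors $x,y$ we have $(x+y)^{\otimes 2} \preccurlyeq 2x^{\otimes 2} + 2y^{\otimes 2}$. Applied to each summand of $\hat\Sigma^{kh}$, this gives
$$
\EWest{\hat{\Sigma}^{kh}} \preccurlyeq 2\,\EW{h^{-1}\sum_{i=1}^{nh}(M_{ik}-\bar M_k)^{\otimes 2}} + 2\,\EW{h^{-1}\sum_{i=1}^{nh}(B_{ik}-\bar B_k)^{\otimes 2}}.
$$
The projection $\Proj{P}_{>r}$ is deterministic, since it depends only on $\Sigma_{\avg}^{kh}$. Hence it commutes with the expectation and preserves the Loewner order. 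Since the spectral norm is monotone on positive semi-definite matrices, it suffices to bound the two projected terms separately.

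\emph{Martingale term.} The $M_{ik}$ are independent, centred and Gaussian with covariances $\int_{t_{(i-1)k}}^{t_{ik}}\Sigma(s)\inte s$. Expanding the square gives
$$
\EW{h^{-1}\sum_{i}(M_{ik}-\bar M_k)^{\otimes 2}} = \Sigma_{\avg}^{kh} - h^{-1}(nh)\,\mathrm{Cov}(\bar M_k) \preccurlyeq \Sigma_{\avg}^{kh}.
$$
Projecting onto $V_{>r}$, the norm is therefore bounded by $\Vert\Projro{\Sigma_{\avg}^{kh}}\Vert = \lambda_{r+1}(\Sigma_{\avg}^{kh})$, because $V_{>r}$ is spanned by the eigenvectors of $\Sigma_{\avg}^{kh}$ with the smallest $d-r$ eigenvalues.

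\emph{Drift term.} The point of re-centring is that it removes the block-level drift. I would write
$$
B_{ik}-\bar B_k = (nh)^{-1}\sum_{j=1}^{nh}\bigl(B_{ik}-B_{jk}\bigr),
$$
where each difference $B_{ik}-B_{jk} = \int_0^{1/n}\bigl(b(t_{(i-1)k}+u)-b(t_{(j-1)k}+u)\bigr)\inte u$ involves time points at distance at most $h$. By Jensen's inequality in $j$ and in $u$, followed by Lyapunov's inequality $\EWt{\Vert\cdot\Vert^2}^{1/2}\le\EWt{\Vert\cdot\Vert^4}^{1/4}$ and Assumption~\ref{ass:hölderad},
$$
\EWest{\Vert B_{ik}-\bar B_k\Vert^2} \le n^{-2}\sup_{|s-t|\le h}\EWest{\Vert b(s)-b(t)\Vert^2} \le n^{-2}L_b^2h^{2\beta_b}.
$$
Using $\Vert v^{\otimes 2}\Vert=\Vert v\Vert^2$ and the fact that projection does not increase the norm, the projected drift term is at most $h^{-1}\cdot nh\cdot n^{-2}L_b^2h^{2\beta_b} = L_b^2n^{-1}h^{2\beta_b}$.

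Combining the two bounds yields $\Vert\EWest{\hat\Sigma^{kh}_{>r}}\Vert \le 2\lambda_{r+1}(\Sigma_{\avg}^{kh}) + 2L_b^2n^{-1}h^{2\beta_b}$. Multiplying by $h$ and summing over the $h^{-1}$ blocks gives the claim. Assumption~\ref{ass:nh} enters only to ensure that each block contains at least one increment, so that $nh\ge 1$. The spectral gap plays no role here; it becomes relevant only when $\lambda_{r+1}(\Sigma_{\avg}^{kh})$ itself is bounded later.

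The step I expect to need the most care is the treatment of the cross terms, which are nonzero because $b$ is adapted rather than independent of $W$. The Loewner-order domination sidesteps them at the cost of a factor $2$, and that factor is absorbed into $\lesssim$. A secondary point is to state the Jensen step carefully, so that the two $n^{-1}$ factors appear: one from the length of the integration interval and one from squaring.
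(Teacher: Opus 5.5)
Your proof is correct, and it takes a genuinely different and considerably shorter route than the paper.

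The paper splits each increment into the conditional drift $B_{ik}^{\mathrm{cond}}$ and the remainder $\Delta\check X_{ik}$, which gives the three terms $\mathcal{T}_{1,>r},\mathcal{T}_{2,>r},\mathcal{T}_{3,>r}$. It then splits $\mathcal{T}_{1,>r}$ again into a Gaussian part, a part built from $B_{ik}-B_{ik}^{\mathrm{cond}}$, and a cross term. Both cross terms are bounded by Cauchy--Schwarz, using Lemma~\ref{lem:BcondAvBcondsquared} and \eqref{ineq:BbarB}. This produces mixed contributions of order $L_b n^{-1/2}h^{\beta_b}(\tr_{>r}(\Sigma_{\avg}^{kh}))^{1/2}$. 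At the end these have to be absorbed via $\tr_{>r}\le (d-r)\lambda_{r+1}$ and, implicitly, $2ab\le a^2+b^2$, and Assumption~\ref{ass:nh} is invoked to dominate factors such as $(nh)^{1/2}+1$.

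Your inequality $(x+y)^{\otimes 2}\preccurlyeq 2x^{\otimes 2}+2y^{\otimes 2}$ is exactly the matrix version of that AM--GM step, applied pathwise before taking expectations. As a result the cross terms never have to be estimated at all. By keeping the compression on the Gaussian part and discarding it on the drift part, you obtain $\lambda_{r+1}(\Sigma_{\avg}^{kh})$ and $L_b^2n^{-1}h^{2\beta_b}$ directly, and your drift estimate coincides with \eqref{ineq:BbarB}.

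Your route buys several things:
\begin{itemize}
\item brevity and an explicit constant $2$;
\item no conditional expectations of $b$, since your drift bound involves only second moments of increments of $b$, reflecting that re-centring removes any time-constant drift component;
\item no reliance on Assumption~\ref{ass:nh} beyond $nh\ge 1$.
\end{itemize}
The paper's decomposition keeps the exact leading factor $1-(nh)^{-1}$ on the Gaussian part and makes the drift--diffusion interaction visible term by term. However, those cross terms are in the end controlled only by Cauchy--Schwarz, so it yields no better rate than yours.
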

  
Having understood the expectation under the null hypothesis, we now turn to the deviations. The next result provides an upper bound for the stochastic error.

\begin{proposition}[Upper bound for the stochastic error] \label{prop:stocherrorh0ad}
    Let Assumption~\ref{ass:nh} hold. If $k \in \ldbrack 0, h^{-1}-1\rdbrack$ and if $(\Sigma, b) \in \mathcal{H}_0$ or $(\Sigma, b) \in \mathcal{H}_{0}^{\mathrm{gap}}$, depending on whether a spectral gap exists, then we have
    \begin{align*}
        \sum_{k=0}^{h^{-1}-1} h\EWest{\big\Vert \hat{\Sigma}_{>r}^{kh} - \EWest{\hat{\Sigma}_{>r}^{kh}} \big\Vert} \lesssim \bigg(\sum_{k=0}^{h^{-1}-1} h \lambda_{r+1}\big(\Sigma_{\avg}^{kh}\big)\bigg) + L_b^2 n^{-1}h^{2\beta_b}.
    \end{align*}
\end{proposition}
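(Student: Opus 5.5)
Throughout we fix a block $k$, write $P:=\Proj{P}_{>r}$ for the projection onto $V_{>r}$ of that block, and work with the centred increments $Y_i := \Delta X_{ik} - \bar{\Delta X}_k$, where $\bar{\Delta X}_k:=(nh)^{-1}\sum_{j}\Delta X_{jk}$. Splitting each increment into its drift and diffusion parts, $\Delta X_{ik} = B_i + M_i$ with
\[
B_i := \int_{kh+(i-1)/n}^{kh+i/n} b(t)\inte t, \qquad M_i := \int_{kh+(i-1)/n}^{kh+i/n}\sigma(t)\inte W(t),
\]
we write $\tilde B_i$ and $\tilde M_i$ for the re-centred versions. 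This gives the decomposition
\[
\hat\Sigma^{kh}_{>r} = h^{-1}\sum_i (P\tilde M_i)^{\otimes2} + h^{-1}\sum_i P\tilde M_i\odot P\tilde B_i + h^{-1}\sum_i (P\tilde B_i)^{\otimes 2} =: A_1+A_2+A_3 .
\]
Since $\Vert Z-\EWt{Z}\Vert \le \Vert Z\Vert + \Vert\EWt{Z}\Vert$, the expectation of the stochastic error is at most $2\sum_j\EWt{\Vert A_j-\EWt{A_j}\Vert}$ up to constants. We bound the three terms separately; $A_3$ and $A_2$ are crude, and $A_1$ requires care.

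\textbf{Drift term $A_3$.} Because re-centring removes any constant drift, we have $\tilde B_i = \int_{\cdot}(b(t)-\bar b)\inte t$ with $\bar b$ a block average. Hölder's inequality and Assumption~\ref{ass:hölderad} then give $\EWt{\Vert\tilde B_i\Vert^4}^{1/4} \lesssim L_b n^{-1}h^{\beta_b}$. Hence
\[
\EWt{\Vert A_3\Vert} \le h^{-1}\sum_i\EWt{\Vert\tilde B_i\Vert^2} \lesssim h^{-1}\, nh\, L_b^2 n^{-2}h^{2\beta_b} = L_b^2n^{-1}h^{2\beta_b},
\]
and after averaging over $k$ this is exactly the second summand of the bound.

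\textbf{Gaussian term $A_1$.} The $M_i$ are independent Gaussian vectors, since $\sigma$ is deterministic. So $A_1$ is a quadratic form in Gaussians, $h^{-1}\,\mathbf{M}^\top(\indikator_{nh}-(nh)^{-1}\mathbf{1}\mathbf{1}^\top)\mathbf{M}$, whose centring matrix has spectral norm at most one. Its expectation satisfies
\[
\Vert\EWt{A_1}\Vert \le h^{-1}\sum_i \Vert P\,\EWt{M_i^{\otimes2}}P\Vert \le \Vert P\Sigma^{kh}_{\avg}P\Vert = \lambda_{r+1}(\Sigma^{kh}_{\avg}),
\]
because $P$ projects onto the eigenspace of the smallest eigenvalues. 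We then bound $\EWt{\Vert A_1\Vert}$ itself by $\EWt{\tr(A_1)} \le \tr(P\Sigma^{kh}_{\avg}P) \le (d-r)\lambda_{r+1}(\Sigma^{kh}_{\avg})$. This works because $A_1\succcurlyeq 0$, its trace is bounded by that of the uncentred second moment, and the dimension is fixed. So we do not need any concentration argument, only positivity and the trace bound.

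\textbf{Cross term $A_2$.} By Cauchy--Schwarz,
\[
\Vert A_2\Vert \le 2\big(h^{-1}\textstyle\sum_i\Vert P\tilde M_i\Vert^2\big)^{1/2}\big(h^{-1}\sum_i\Vert\tilde B_i\Vert^2\big)^{1/2}.
\]
Applying Cauchy--Schwarz again in expectation and then AM--GM gives
\[
\EWt{\Vert A_2\Vert} \lesssim \lambda_{r+1}(\Sigma^{kh}_{\avg}) + L_b^2n^{-1}h^{2\beta_b},
\]
using the two bounds already obtained. The term $\Vert\EWt{A_2}\Vert$ is bounded by the same quantity. The adaptedness of $b$, which correlates $\tilde B_i$ with $M_i$, causes no trouble here because only norms enter.

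Summing $h\,\cdot$ over $k$ yields the claim. The main obstacle I expect is the drift estimate: making precise that re-centring reduces $\tilde B_i$ to Hölder increments of $b$ of size $h^{\beta_b}$ rather than $\Vert b\Vert$. I would write
\[
\tilde B_i = \int_{I_{ik}}(b(t)-b(kh))\inte t - (nh)^{-1}\sum_j\int_{I_{jk}}(b(t)-b(kh))\inte t
\]
and apply Minkowski's inequality with the fourth-moment Hölder condition, using $|t-kh|\le h$. Assumption~\ref{ass:nh} ensures each block contains at least one increment, so the sums are non-degenerate.
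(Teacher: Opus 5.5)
Your proof is correct, and it takes a genuinely different, more elementary route than the paper.

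The paper controls the second moment $\mathbb{E}[\Vert\hat\Sigma^{kh}_{>r}-\mathbb{E}[\hat\Sigma^{kh}_{>r}]\Vert^2]$. It re-centres the increments at $n^{-1}b(kh)$ and splits the fluctuation into the six centred terms $\mathcal{T}_{4,>r},\dots,\mathcal{T}_{9,>r}$. These are bounded using the Gaussian fourth-moment identity (Lemma~\ref{lem:HSEukl}, Corollary~\ref{cor:RVwithNormalMultiplikation}) and the fourth-moment H\"older condition. The $(nh)^{-1}$ factors are absorbed via Assumption~\ref{ass:nh}, and only at the end does the paper pass to $L^1$ by Jensen.

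You instead use $\mathbb{E}\Vert Z-\mathbb{E}Z\Vert\le 2\,\mathbb{E}\Vert Z\Vert$ together with positivity. The Gaussian part is then handled by a trace bound, and the drift parts need only second moments of $\tilde B_i$. So your argument needs only the $L^2$ form of the H\"older condition and never has to absorb $(nh)^{-1}$ factors. The price is that your route can never give more than the bias order. A variance computation could in principle exhibit a concentration gain of order $(nh)^{-1/2}$. However, the paper discards exactly this gain when it uses $\sum_i a_i^2\le(\sum_i a_i)^2$ in bounding $\mathcal{T}_{4,>r}$, so for the stated bound nothing is lost.

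Your idea can be pushed one step further. Since $\hat\Sigma^{kh}_{>r}$ is itself positive semi-definite, $\mathbb{E}\Vert\hat\Sigma^{kh}_{>r}-\mathbb{E}[\hat\Sigma^{kh}_{>r}]\Vert\le 2\,\tr(\mathbb{E}[\hat\Sigma^{kh}_{>r}])\le 2(d-r)\Vert\mathbb{E}[\hat\Sigma^{kh}_{>r}]\Vert$. Hence the proposition is a direct corollary of Proposition~\ref{prop:biash0ad}. Equivalently, you could bound $\tr(\hat\Sigma^{kh}_{>r})\le 2h^{-1}\sum_i(\Vert P\tilde M_i\Vert^2+\Vert\tilde B_i\Vert^2)$ and skip the separate treatment of the cross term $A_2$.

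One small slip: in your display for $\Vert\mathbb{E}A_1\Vert$, the step $h^{-1}\sum_i\Vert P\,\mathbb{E}[M_i^{\otimes 2}]P\Vert\le\Vert P\Sigma^{kh}_{\avg}P\Vert$ goes the wrong way in general. A sum of norms dominates the norm of the sum when the eigendirections of $P\Sigma(t)P$ rotate within a block. The conclusion still holds because $0\preccurlyeq\mathbb{E}A_1=(1-(nh)^{-1})P\Sigma^{kh}_{\avg}P$, and your trace bound on $\mathbb{E}\Vert A_1\Vert$ makes that display unnecessary anyway.
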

Before we state the proof ideas, we introduce the following notation. 
\begin{notation}\label{defi:form}
    Let $i \in \ldbrack 1, nh \rdbrack$ and $k \in \ldbrack 0, h^{-1}-1 \rdbrack$. For convenience, let $I_{ik} := [kh + (i-1)/n,\; kh + i/n]$ and $I_{k} := [kh, (k+1)h]$. We use the abbreviation $\mathcal{F}_{i-1, k} := \mathcal{F}_{kh + (i-1)/n}$.
    In the following, the object of interest is defined first, followed by its block-wise average. We have
    \begin{align*}
        B_{ik} &:= \int_{I_{ik}} b(t) \inte t, &\quad \overline{B_k} &:= (nh)^{-1} \sum_{j=1}^{nh} B_{jk}, \\ 
        B_{ik}^{\mathrm{cond}}&:=\int_{I_{ik}} \EWcond{b(t)}{\mathcal{F}_{i-1,k}} \inte t, &\quad \overline{B_{k}^{\mathrm{cond}}} &:= (nh)^{-1} \sum_{j=1}^{nh} B_{jk}^{\mathrm{cond}}, \\
        \Delta X_{ik} &:= B_{ik} + \int_{I_{ik}} \sigma(t) \inte W(t), &\quad \overline{\Delta X_{k}} &:= (nh)^{-1} \sum_{j=1}^{nh} \Delta X_{jk}, \\ 
        \Delta \tilde{X}_{ik} &:= \Delta X_{ik} - B_{ik}, &\quad \overline{\Delta \tilde{X}_k} &:= (nh)^{-1} \sum_{j=1}^{nh} \Delta \tilde{X}_{jk}, \\
        \Delta \check{X}_{ik} &:= \Delta X_{ik} - B^{\mathrm{cond}}_{ik}, &\quad \overline{\Delta \check{X}_k} &:= (nh)^{-1} \sum_{j=1}^{nh} \Delta \check{X}_{jk}.
    \end{align*}
\end{notation}
\begin{proof}[Proof of Proposition \ref{prop:biash0ad} based on an error decomposition.]
    Throughout the proof, let $k \in \ldbrack 0, h^{-1}-1 \rdbrack$. We start by decomposing $\EWt{\hat{\Sigma}^{kh}_{>r}}$ as 
    \begin{align}
         h \big\Vert\EWest{\hat{\Sigma}_{>r}^{kh}}\big\Vert 
         &\leq \left\Vert \EW{\mathcal{T}_{1, >r}} \right\Vert + \left\Vert \EW{\mathcal{T}_{2,>r}} \right\Vert + 2\left\Vert \EW{\mathcal{T}_{3,>r}}\right\Vert, \label{eq:biasdecdd}
    \end{align}
    where 
    \begin{align}
        \mathcal{T}_{1, >r} &:= \Projrgg{\sum_{i=1}^{nh}\left(\Delta \check{X}_{ik} - \overline{\Delta \check{X}_{k}}\right)^{\otimes 2}}, \label{eq:T1Proj}\\
        \mathcal{T}_{2,>r} &:= \Projrgg{\sum_{i=1}^{nh}\left(B_{ik}^{\mathrm{cond}} - \overline{B_{k}^{\mathrm{cond}}}\right)^{\otimes 2}}, \label{eq:T2Proj}\\
        \mathcal{T}_{3,>r} &:= \Projrgg{\sum_{i=1}^{nh} \left(B_{ik}^{\mathrm{cond}} - \overline{B_{k}^{\mathrm{cond}}}\right)\left(\Delta \check{X}_{ik} - \overline{\Delta \check{X}_{k}}\right)^\top}. \label{eq:T3Proj}
    \end{align}
    Using this decomposition, we get the following upper bounds.  
    \begin{align}
        \left\Vert \EW{\mathcal{T}_{1, >r}} \right\Vert &\lesssim h \lambda_{r+1}\big(\Sigma_{\avg}^{kh}\big) + L_b n^{-1/2}h^{\beta_b + 1}\big(\tr_{>r}\big(\Sigma_{\avg}^{kh}\big)\big)^{1/2} + L_b^2 n^{-1}h^{2\beta_b + 1}, \label{eq:EWT1Proj} \\
        \left\Vert \EW{\mathcal{T}_{2, >r}} \right\Vert &\lesssim L_b^2n^{-1}h^{2\beta_b+1}, \label{eq:EWT2Proj} \\
        \left\Vert \EW{\mathcal{T}_{3, >r}} \right\Vert &\lesssim L_b n^{-1/2}h^{\beta_b + 1} \big(\tr_{>r}\big(\Sigma_{\avg}^{kh}\big)\big)^{1/2} + L_b^2 n^{-1} h^{2\beta_b+1}.\label{eq:EWT3Proj}
    \end{align}
    The upper bounds are obtained by standard arguments involving several technical steps. Detailed derivations, together with the final combination of these upper bounds yielding the claim, are provided in Appendix~\ref{app:biash0ad}.
\end{proof}
\begin{proof}[Proof of Proposition \ref{prop:stocherrorh0ad} based on an error decomposition.]
For the re-centred estimator $\hat{\Sigma}^{kh}$, the squared deviation from its expectation admits the decomposition 
    \begin{align*}
        &\EWest{\big\Vert \hat{\Sigma}_{>r}^{kh} - \EWest{\hat{\Sigma}_{>r}^{kh}} \big\Vert^2}\\
        &\leq 6 \left( \EW{\left\Vert \mathcal{T}_{4,>r} \right\Vert^2 + \left\Vert \mathcal{T}_{5,>r}\right\Vert^2 + 2\left\Vert \mathcal{T}_{6, >r}\right\Vert^2} + n^{-2}h^{-4} \EW{\left\Vert \mathcal{T}_{7, >r} \right\Vert^2 + \left\Vert \mathcal{T}_{8, >r} \right\Vert^2 + \left\Vert \mathcal{T}_{9, >r} \right\Vert^2} \right),
    \end{align*}
    where 
    \begin{align}
        \mathcal{T}_{4, >r} &:= h^{-1}\sum_{i=1}^{nh} \bigg(\Projro{\big(\Delta \tilde{X}_{ik} \big)^{\otimes 2}} - \EW{\Projro{\big(\Delta \tilde{X}_{ik}\big)^{\otimes2}}}\bigg), \label{eq:T4Proj}\\
        \mathcal{T}_{5, >r} &:= h^{-1}\sum_{i=1}^{nh} \bigg(\Projrogg{\bigg(\int_{I_{ik}} \left(b(t) - b(kh)\right) \inte t\bigg)^{\otimes2}} \notag\\
        &\phantom{:=}\qquad\qquad\qquad\qquad\qquad\qquad\qquad- \mathbb{E}\bigg[\Projrogg{\bigg(\int_{I_{ik}} \left(b(t) - b(kh)\right) \inte t\bigg)^{\otimes2}}\bigg]\bigg), \label{eq:T5Proj}\\
        \mathcal{T}_{6,>r} &:= h^{-1}\sum_{i=1}^{nh} \bigg(\Projrgg{\big(\Delta \tilde{X}_{ik} \big)\bigg(\int_{I_{ik}} \left(b(t) - b(kh)\right) \inte t\bigg)^{\top}} \notag\\
        &\phantom{:=}\qquad\qquad\qquad\qquad\quad\,- \mathbb{E}\bigg[\Projrgg{\big(\Delta \tilde{X}_{ik} \big)\left(\int_{I_{ik}} \left(b(t) - b(kh)\right) \inte t\right)^{\top}}\bigg]\bigg), \label{eq:T6Proj}\\
        \mathcal{T}_{7, >r} &:= \Projro{\left(nh \overline{\Delta \tilde{X}_k}\right)^{\otimes 2}} - \mathbb{E}\bigg[\Projro{\left(nh \overline{\Delta \tilde{X}_k}\right)^{\otimes2}}\bigg], \label{eq:T7Proj}\\
        \mathcal{T}_{8, >r} &:= \Projrogg{\left(\int_{I_k} \left(b(t)-b(kh)\right) \inte t\right)^{\otimes2}} - \mathbb{E}\bigg[\Projrogg{\left(\int_{I_k} \left(b(t)-b(kh)\right) \inte t\right)^{\otimes2}}\bigg], \label{eq:T8Proj}\\
        \mathcal{T}_{9, >r} &:= \Projrgg{\left(nh \overline{\Delta \tilde{X}_{k}} \right) \bigg(\int_{I_k} \left(b(t)-b(kh) \right) \inte t\bigg)^{\top}} \notag\\ 
        &\phantom{:=}- \mathbb{E}\bigg[\Projrgg{\left(nh \overline{\Delta \tilde{X}_{k}} \right) \bigg(\int_{I_k} \left(b(t)-b(kh)\right) \inte t\bigg)^{\top}}\bigg] \label{eq:T9Proj}
    \end{align}
    (see Appendix~\ref{app:stocherroroh0add} for more details). Moreover, we obtain the following upper bounds for each term separately
    \begin{align}
        \EW{\left\Vert\mathcal{T}_{4,>r}\right\Vert^2} &\lesssim \big(\tr_{>r}\big(\Sigma_{\avg}^{kh}\big)\big)^2, \label{eq:EWT4Proj} \\
        \EW{\Vert\mathcal{T}_{5,>r}\Vert^2} &\leq L_b^4 n^{-2}h^{4 \beta_b}, \label{eq:EWT5Proj} \\
        \EW{\left\Vert\mathcal{T}_{6,>r}\right\Vert^2} &\lesssim L_b^2 n^{-1} h^{2 \beta_b} \tr_{>r}\big(\Sigma_{\avg}^{kh}\big), \label{eq:EWT6Proj} \\
        \EW{\left\Vert \mathcal{T}_{7,>r}\right\Vert^2} &\lesssim h^2\big(\tr_{>r}\big(\Sigma_{\avg}^{kh}\big)\big)^2 \label{eq:EWT7Proj}, \\
        \EW{\left\Vert \mathcal{T}_{8,>r} \right\Vert^2} &\leq L_b^4 h^{4(\beta_b + 1)}, \label{eq:EWT8Proj} \\
        \EW{\left\Vert\mathcal{T}_{9,>r}\right\Vert^2} &\lesssim L_b^2 h^{2\beta_b+ 3} \tr_{>r}\big(\Sigma_{\avg}^{kh}\big). \label{eq:EWT9Proj} 
    \end{align}
    The upper bounds are obtained by standard arguments involving several technical steps. Their detailed derivation and final combination to conclude the claim are therefore deferred to Appendix~\ref{app:stocherroroh0add}.
\end{proof}

Next, we establish several useful properties and recall a necessary mathematical prerequisite.

\begin{remark} Let $i \in \ldbrack 1, nh \rdbrack$ and $k \in \ldbrack 0, h^{-1}-1 \rdbrack$. 
    \begin{enumerate}
        \item The increments $\Delta \tilde{X}_{ik} \sim \mathcal{N}(0,\int_{I_{ik}}\Sigma(t) \inte t)$ coincide with the centred increments considered by \cite{ReissWinkelmann2023}. 
        \item Replacing $\Delta X_{ik}$ by $\Delta \tilde{X}_{ik}$ in the second moment estimator \eqref{eq:secmomest} we obtain 
        $$
            \mathbb{E}\bigg[h^{-1} \sum_{i=1}^{nh} \big(\Delta \tilde{X}_{ik}\big)^{\otimes 2}\bigg] = h^{-1} \int_{I_{k}} \Sigma(t) \inte t = \Sigma_{\avg}^{kh}. 
        $$
        \item Let $t_{i, k} := kh + i/n$. The increments $\Delta \check{X}_{ik}$ form a martingale difference sequence with respect to the canonical filtration $\mathcal{G}^{W}_{t} := \sigma(W(s),\ s \leq t)$, i.e.\
        $$
            \EW{\big\Vert \Delta \check{X}_{ik} \big\Vert} < \infty \text{ and } \EWcond{\Delta \check{X}_{ik}}{\mathcal{G}^{W}_{t_{i-1, k}}} = 0 \text{ almost surely}. 
        $$
        For a deeper discussion of martingale difference sequences, we refer the reader to \cite[Chapter 16]{DavidsonJames2021Slt:}. 
    \end{enumerate}
\end{remark}

\subsection{Rates for the population version of the test statistics}\label{sec:poph0}
As the next step towards deriving the critical value $\kappa_{\alpha}$ in \eqref{eq:Test}, we provide a general bound for the population version of the test statistics $T_{n,h}$ defined in \eqref{eq:Statistic}. 

\indent The behaviour of test statistics $T_{n,h}$ from \eqref{eq:Statistic} exhibits a bias-variance trade-off depending on the block length $h \in (0,1]$. In particular, a large $h$ may artificially inflate the $(r+1)$th largest eigenvalue. Even if $\lambda_{r+1}(\Sigma(t)) = 0$ for all $t \in [0,1]$, the averaged covariance $\Sigma_{\avg}^{kh} = h^{-1}\int_{I_{k}}\Sigma(t) \inte t$, $k \in \ldbrack 0, h^{-1}-1\rdbrack$, may still satisfy $\lambda_{r+1}(\Sigma^{kh}_{\avg}) > 0$. This effect follows directly from the variational characterisation of eigenvalues: averaging positive semi-definite matrices can only increase the rank (see for instance \cite[Proposition 1.3.4]{Tao2012} and \cite[Lemma 2.2]{ReissWinkelmann2023}). Thus, to understand the behaviour of our test statistic under the null hypothesis, it is essential to quantify how large the eigenvalue $\lambda_{r+1}(\Sigma_{\avg}^{kh})$ may become under the constraint $\rank(\Sigma(t)) = r$ for all $t \in [0,1]$. 
\begin{lemma} \label{cor:MCor3.2} 
    For any $\Sigma \in \mathbb{R}^{d \times d}_{\spd}$ with maximal rank $r \in \ldbrack 0, d-1 \rdbrack$ we have 
   \begin{align*}
        \sum_{k=0}^{h^{-1}-1}h\lambda_{r+1}(\Sigma_{\avg}^{kh}) \leq \begin{cases}
             L_{\Sigma}h^{\beta_{\Sigma}} &\text{under $\mathcal{H}_0(r, \beta_b, \beta_{\Sigma}, L_b,  L_{\Sigma})$},\\
             2\underline{\lambda}_r^{-1} L_{\Sigma}^2h^{2\beta_{\Sigma}} &\text{under $\mathcal{H}_0^{\mathrm{gap}}(r, \beta_b, \beta_{\Sigma}, L_b, L_{\Sigma}, \underline{\lambda}_r)$.}
        \end{cases}
    \end{align*}
\end{lemma}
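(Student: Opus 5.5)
It suffices to prove a pointwise bound for each block $I_k$ and then sum with weights $h$. Fix $k$ and write $\Sigma_{\avg}^{kh} = h^{-1}\int_{I_k}\Sigma(t)\inte t$. The idea is to compare $\Sigma_{\avg}^{kh}$ with a single rank-$r$ matrix. The natural reference point is $\Sigma(t_0)$ for some $t_0\in I_k$, say the left endpoint $kh$; under the null this matrix has rank at most $r$.

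\emph{Case without gap.} By Weyl's inequality, $\lambda_{r+1}(\Sigma_{\avg}^{kh}) \le \lambda_{r+1}(\Sigma(t_0)) + \Vert \Sigma_{\avg}^{kh} - \Sigma(t_0)\Vert$, and the first term vanishes. Using H\"older continuity,
\[
\Vert \Sigma_{\avg}^{kh} - \Sigma(t_0)\Vert \le h^{-1}\int_{I_k}\Vert\Sigma(t)-\Sigma(t_0)\Vert\inte t \le L_\Sigma h^{\beta_\Sigma}.
\]
Summing $h$ times this over the $h^{-1}$ blocks gives $L_\Sigma h^{\beta_\Sigma}$. (Choosing $t_0$ to be the midpoint would improve the constant, but it is not needed.)

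\emph{Case with spectral gap.} Here I use a second-order perturbation argument through the variational characterisation. Let $P$ be the orthogonal projection onto the orthogonal complement $V^\perp$ of the range of $\Sigma(t_0)$; this complement has dimension $d-r$. By Courant--Fischer,
\[
\lambda_{r+1}(\Sigma_{\avg}^{kh}) \le \max_{v\in V^\perp,\,\Vert v\Vert=1} v^\top \Sigma_{\avg}^{kh} v = \Big\Vert P\Big(h^{-1}\int_{I_k}\Sigma(t)\inte t\Big)P\Big\Vert \le h^{-1}\int_{I_k}\Vert P\Sigma(t)P\Vert\inte t .
\]
So it remains to show that $\Vert P\Sigma(t)P\Vert \le 2\underline{\lambda}_r^{-1}\Vert\Sigma(t)-\Sigma(t_0)\Vert^2$ for every $t$. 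Fix $t$, let $Q_t$ be the projection onto the range of $\Sigma(t)$ (of rank at most $r$), and set $E=\Sigma(t)-\Sigma(t_0)$. For a unit vector $v\in V^\perp$ we have $v^\top\Sigma(t)v = v^\top E v$. Moreover $\Sigma(t)=\Sigma(t)^{1/2}\Sigma(t)^{1/2}$, so
\[
v^\top\Sigma(t)v=\Vert\Sigma(t)^{1/2}v\Vert^2 \le \lambda_1(\Sigma(t))\Vert Q_tv\Vert^2 .
\]
This route does not obviously give the stated constant, so I use Davis--Kahan instead. The $r$-dimensional eigenspaces of $\Sigma(t_0)$ and $\Sigma(t)$ are separated by a gap of at least $\underline{\lambda}_r$: the $r$th eigenvalue of $\Sigma(t_0)$ is $\ge\underline{\lambda}_r$, and the $(r+1)$th eigenvalue of $\Sigma(t)$ is $0$. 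Davis--Kahan then gives $\Vert Q_t v\Vert \le \Vert Q_t P\Vert \le \Vert E\Vert/\underline{\lambda}_r$. Writing $\Sigma(t)=Q_t\Sigma(t)Q_t$ and $\Sigma(t_0)Q_tv = Q_tv$-component considerations, a cleaner identity is
\[
v^\top\Sigma(t)v=v^\top Q_t\Sigma(t)Q_tv = v^\top Q_t\big(\Sigma(t_0)+E\big)Q_t v .
\]
Since $\Sigma(t_0)v=0$ and $Pv=v$, the term $\Sigma(t_0)$ can be handled via $E P$ and $P E$. The bound to aim for is
\[
v^\top\Sigma(t)v\le 2\Vert E\Vert\,\Vert Q_tP\Vert \le 2\Vert E\Vert^2/\underline{\lambda}_r .
\]
A more direct route, which I would try first, is the Sylvester-equation identity: $\Sigma(t)P = EP$ (because $\Sigma(t_0)P=0$), so $P\Sigma(t)P = PEP$. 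Combined with $\Sigma(t)\succeq0$ and $\operatorname{rank}\Sigma(t)\le r$, the Schur complement of $\Sigma(t)$ in the basis $\mathrm{range}(\Sigma(t_0))\oplus V^\perp$ vanishes. This gives
\[
P\Sigma(t)P = P\Sigma(t)(1-P)\,\big[(1-P)\Sigma(t)(1-P)\big]^{-1}(1-P)\Sigma(t)P .
\]
The off-diagonal block equals $(1-P)EP$ and has norm $\le\Vert E\Vert$. The inverse of the upper block has norm at most $(\lambda_r(\Sigma(t_0))-\Vert E\Vert)^{-1}$ by Weyl.

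If $\Vert E\Vert\le\underline{\lambda}_r/2$, the inverse has norm $\le 2/\underline{\lambda}_r$, so $\Vert P\Sigma(t)P\Vert\le 2\Vert E\Vert^2/\underline{\lambda}_r$. Otherwise the first-case bound $\Vert P\Sigma(t)P\Vert\le\Vert E\Vert$ already satisfies $\Vert E\Vert< 2\Vert E\Vert^2/\underline{\lambda}_r$. With $\Vert E\Vert\le L_\Sigma h^{\beta_\Sigma}$, summing over the blocks yields $2\underline{\lambda}_r^{-1}L_\Sigma^2h^{2\beta_\Sigma}$.

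The main obstacle is this Schur-complement step: justifying that the upper block is invertible and that the Schur complement vanishes at rank exactly $r$. This uses $\lambda_r(\Sigma(t))\ge\underline{\lambda}_r>0$ together with rank $\le r$. The invertibility requires care in the regime $\Vert E\Vert\ge\lambda_r(\Sigma(t_0))$, which is exactly why the argument splits into the two regimes above.
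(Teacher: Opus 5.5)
Your proof is correct, but it does not follow the paper's route. The paper gives no argument of its own and simply invokes Corollary~3.2 of \cite{ReissWinkelmann2023}, where the bound comes from a perturbation bound for $\lambda_{r+1}$ of averages of rank-$r$ matrices. You prove it directly.

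\begin{itemize}
\item \emph{No gap.} You apply Weyl's inequality against the rank-$r$ reference $\Sigma(kh)$.
\item \emph{Gap.} You apply Courant--Fischer on $V^\perp=\ker\Sigma(kh)$. This reduces the claim to the pointwise bound $\Vert P\Sigma(t)P\Vert\le 2\underline{\lambda}_r^{-1}\Vert E\Vert^2$.
\item \emph{Pointwise bound.} When $\Vert E\Vert\le\underline{\lambda}_r/2$, you use the Schur-complement identity $P\Sigma(t)P=B^\top A^{-1}B$ with $B=(1-P)EP$. Otherwise you use $P\Sigma(t)P=PEP$.
\end{itemize}

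All steps check out, and the case split is exactly what produces the constant $2$. The citation buys brevity. Your version is elementary and shows explicitly where the H\"older condition and the gap enter.

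For a final write-up, make three changes.
\begin{itemize}
\item Delete the abandoned detours: the $\Sigma(t)^{1/2}$ route and the garbled Davis--Kahan sentence.
\item Fix your closing remark about which hypotheses are used. The Schur step uses the gap only at $t_0$. It is needed for invertibility of the upper block $A=(1-P)\Sigma(t)(1-P)|_{\mathrm{range}\,\Sigma(t_0)}$, via $\lambda_r(\Sigma(t_0))\ge\underline{\lambda}_r$ and Weyl. The vanishing of the Schur complement needs only $\rank\,\Sigma(t)\le r$, by rank additivity $\rank\,\Sigma(t)=\rank A+\rank(P\Sigma(t)P-B^\top A^{-1}B)$. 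Neither $\lambda_r(\Sigma(t))\ge\underline{\lambda}_r$ nor rank exactly $r$ at $t$ is needed.
\item Consider reviving the Davis--Kahan route you dropped: it closes, with a better constant and no case split. Since $\Sigma(t_0)P=0$ and $Q_t\Sigma(t)=\Sigma(t)$, one has $\Sigma(t)P=Q_tEP$. Then $Q_tP=\Sigma(t)^{+}Q_tEP$ gives $\Vert Q_tP\Vert\le\Vert E\Vert/\lambda_r(\Sigma(t))$. Together with $P\Sigma(t)P=PQ_tEP$ this yields $\Vert P\Sigma(t)P\Vert\le\underline{\lambda}_r^{-1}\Vert E\Vert^2$, with the gap used at $t$ rather than $t_0$.
\end{itemize}
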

\begin{proof}
    This can be found in \cite[Corollary 3.2]{ReissWinkelmann2023}.
\end{proof}

Combining the eigenvalue control from the preceding lemma with the bounds established in the previous subsection, we obtain an upper bound for the population version of the test statistic.

\begin{theorem}\label{thm:UBad}
    Let $\Sigma$ be a symmetric matrix with maximal rank $r \in \ldbrack0, d-1 \rdbrack$ on $I_k$, $k \in \ldbrack 0, h^{-1}-1\rdbrack$. Grant $1 \geq \beta_b \geq \beta_{\Sigma} > 0$. For the population version of the test statistics $T_{n,h}$ in \eqref{eq:Test} this yields 
    $$\EW{T_{n,h}} \lesssim \begin{cases}
             h^{\beta_{\Sigma}} \big(L_{\Sigma} + L_b^2 n^{-1}h^{\beta_{b}}\big) &\text{under $\mathcal{H}_0 \left(r, \beta_b, \beta_{\Sigma}, L_b, L_{\Sigma}\right)$},\\
             h^{2\beta_{\Sigma}}\big(\underline{\lambda}_r^{-1}L_{\Sigma}^2 + L_b^2 n^{-1}\big) &\text{under $\mathcal{H}_0^{\mathrm{gap}}(r, \beta_b, \beta_{\Sigma}, L_b, L_{\Sigma}, \underline{\lambda}_r)$}.
        \end{cases} $$
\end{theorem}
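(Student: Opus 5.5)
We combine the interlacing bound \eqref{eq:TUB}, the triangle inequality \eqref{Eq:EigenvalueProjection}, Propositions~\ref{prop:biash0ad} and~\ref{prop:stocherrorh0ad}, and Lemma~\ref{cor:MCor3.2}.

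\textbf{Step 1: reduce to the two propositions.} Taking expectations in \eqref{eq:TUB} and using \eqref{Eq:EigenvalueProjection} blockwise, together with linearity and monotonicity of expectation, gives
\begin{align*}
\EW{T_{n,h}} \leq \sum_{k=0}^{h^{-1}-1} h \big\Vert \EWest{\hat{\Sigma}_{>r}^{kh}}\big\Vert + \sum_{k=0}^{h^{-1}-1} h\EWest{\big\Vert \hat{\Sigma}_{>r}^{kh} - \EWest{\hat{\Sigma}_{>r}^{kh}} \big\Vert}.
\end{align*}
Both propositions apply here. Assumption~\ref{ass:nh} is needed for this, and we regard it as part of the standing setting. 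Together they yield
\begin{align*}
\EW{T_{n,h}} \lesssim \sum_{k=0}^{h^{-1}-1} h \lambda_{r+1}\big(\Sigma_{\avg}^{kh}\big) + L_b^2 n^{-1}h^{2\beta_b}.
\end{align*}

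\textbf{Step 2: bound the eigenvalue sum.} Insert Lemma~\ref{cor:MCor3.2}. Under $\mathcal{H}_0$ the sum is at most $L_\Sigma h^{\beta_\Sigma}$. Under $\mathcal{H}_0^{\mathrm{gap}}$ it is at most $2\underline{\lambda}_r^{-1}L_\Sigma^2 h^{2\beta_\Sigma}$.

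\textbf{Step 3: absorb the drift term using $\beta_b\ge\beta_\Sigma$ and $h\le 1$.}
\begin{itemize}
\item Under $\mathcal{H}_0$, write $h^{2\beta_b}=h^{\beta_b}h^{\beta_b}\le h^{\beta_\Sigma}h^{\beta_b}$. Factoring out $h^{\beta_\Sigma}$ gives $h^{\beta_\Sigma}(L_\Sigma+L_b^2n^{-1}h^{\beta_b})$.
\item Under $\mathcal{H}_0^{\mathrm{gap}}$, use $h^{2\beta_b}\le h^{2\beta_\Sigma}$. Factoring out $h^{2\beta_\Sigma}$ gives $h^{2\beta_\Sigma}(\underline{\lambda}_r^{-1}L_\Sigma^2+L_b^2n^{-1})$.
\end{itemize}
The constant $2$ is absorbed into $\lesssim$.

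\textbf{Where the difficulty lies.} Given the two propositions, none of these steps is hard. The only care needed is to verify that the hypotheses of the theorem place $(\Sigma,b)$ in $\mathcal{H}_0$ or $\mathcal{H}_0^{\mathrm{gap}}$, so that the propositions and the lemma apply. The substantive work sits in the bias and stochastic-error bounds, i.e.\ in controlling the cross terms $\mathcal{T}_3$, $\mathcal{T}_6$ and $\mathcal{T}_9$. Those mix $L_b n^{-1/2}h^{\beta_b}(\tr_{>r}\Sigma_{\avg}^{kh})^{1/2}$-type contributions, which must be split by $2ab\le a^2+b^2$. The relation $\tr_{>r}(\Sigma_{\avg}^{kh})\le (d-r)\lambda_{r+1}(\Sigma_{\avg}^{kh})$ then returns them to the two displayed terms. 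This is already contained in the propositions, which we take as given.
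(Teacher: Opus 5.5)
Your proposal is correct and follows the paper's proof step for step. You combine \eqref{eq:TUB} and \eqref{Eq:EigenvalueProjection} with Propositions~\ref{prop:biash0ad} and~\ref{prop:stocherrorh0ad}, insert Lemma~\ref{cor:MCor3.2}, and absorb $L_b^2 n^{-1}h^{2\beta_b}$ using $h^{\beta_b}\le h^{\beta_\Sigma}$; your remark that Assumption~\ref{ass:nh} must be in force (the theorem omits it, but the propositions need it) is a fair clarification of what the paper leaves implicit.
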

\begin{remark}
    The mild assumption $\beta_b \geq \beta_{\Sigma}$ ensures that the drift process $b$ is at least as smooth as the deterministic covariance function $\Sigma$. If $\beta_b < \beta_{\Sigma}$, the same proof strategy applies, with the only difference arising in the final step, where the leading rate can no longer be identified. Since the qualitative behaviour remains unchanged, we impose this assumption for simplicity.
\end{remark}
\begin{proof}[Proof of Theorem~\ref{thm:UBad}]
    Combining \eqref{eq:TUB} with \eqref{Eq:EigenvalueProjection} and the results in Propositions~\ref{prop:biash0ad} and \ref{prop:stocherrorh0ad}, it follows that 
    \begin{align*}
        \EW{T_{n,h}} &\leq \sum_{k=0}^{h^{-1}-1} h \big(\big\Vert\EWest{\hat{\Sigma}_{>r}^{kh}} \big\Vert + \EWest{\big\Vert \hat{\Sigma}_{>r}^{kh} - \EWest{\hat{\Sigma}_{>r}^{kh}}\big\Vert}\big) \\
        &\lesssim \bigg(\sum_{k=0}^{h^{-1}-1} h \lambda_{r+1}\big(\Sigma_{\avg}^{kh}\big)\bigg) + L_b^2 n^{-1}h^{2\beta_b}.
    \end{align*}
    An application of Lemma~\ref{cor:MCor3.2} yields 
    \begin{align*}
        \EW{T_{n,h}} 
        &\lesssim \begin{cases}
                 L_{\Sigma}h^{\beta_{\Sigma}} + L_b^2 n^{-1}h^{2\beta_b} &\text{under $\mathcal{H}_0(r, \beta_b, \beta_{\Sigma}, L_b, L_{\Sigma})$},\\
                 \underline{\lambda}_r^{-1}L_{\Sigma}^2h^{2\beta_{\Sigma}} + L_b^2 n^{-1}h^{2\beta_b} &\text{under $\mathcal{H}_0^{\mathrm{gap}}(r, \beta_b,  \beta_{\Sigma}, L_b, L_{\Sigma}, \underline{\lambda}_r)$.}
            \end{cases}
    \intertext{The assumption $\beta_b \geq \beta_{\Sigma}$, i.e.\ the drift process $b$ is smoother than the covariance function $\Sigma$, implies $h^{\beta_b} \leq h^{\beta_{\Sigma}}$, and hence}
         \EW{T_{n,h}} &\lesssim \begin{cases}
                 h^{\beta_{\Sigma}} \big(L_{\Sigma} + L_b^2 n^{-1}h^{\beta_{b}}\big) &\text{under $\mathcal{H}_0(r, \beta_b, \beta_{\Sigma}, L_b, L_{\Sigma})$},\\
                 h^{2\beta_{\Sigma}}\big(\underline{\lambda}_r^{-1}L_{\Sigma}^2 + L_b^2 n^{-1}\big) &\text{under $\mathcal{H}_0^{\mathrm{gap}}(r, \beta_b, \beta_{\Sigma}, L_b, L_{\Sigma}, \underline{\lambda}_r)$},
            \end{cases}
    \end{align*}
    which proves the theorem. 
\end{proof}

\begin{remark}
    In the drift-free setting, the second moment estimator attains the same leading convergence rates (see \cite[Theorem 3.3]{ReissWinkelmann2023}). The presence of a drift affects the asymptotic behaviour only when it is non-constant. In this case, i.e.\ when $L_b > 0$, an additional drift-induced term appears in the rate, which vanishes as $n \rightarrow \infty$ and $h \rightarrow 0$. 
    \indent These results are based on prior knowledge of the regularity of $b$ and $\Sigma$ and the existence of a spectral gap $\underline{\lambda}_r$ given by the null hypothesis.
\end{remark}

The results so far allow to control the level-$\alpha$ test $\varphi_{\alpha}$ under the null hypotheses without and with a spectral gap. We can derive non-asymptotic critical values. 

\begin{theorem}\label{thm:H0adapted}
    Grant $1 \geq \beta_b \geq \beta_{\Sigma} > 0$ and Assumption~\ref{ass:nh}. Fix $\alpha \in (0,1)$ and consider the test $\varphi_{\alpha}$ from \eqref{eq:Test} with critical value $\kappa_\alpha$ defined for suitable constants $c_1,c_2 > 0$ as follows: 
    \begin{enumerate}
        \item Without a minimal spectral gap, take the critical value  
        $$
            \kappa_{\alpha} = h^{\beta_{\Sigma}}\left(c_1 L_{\Sigma} + c_2 L_b^2 n^{-1}h^{\beta_b}\right) \alpha^{-1}.
        $$ 
        Then $\varphi_{\alpha}$ has level $\alpha$ uniformly over $\mathcal{H}_0(r,\beta_b, \beta_{\Sigma}, L_b, L_{\Sigma})$: 
        $$
            \sup_{(\Sigma, b) \in \mathcal{H}_0(r,\beta_b, \beta_{\Sigma}, L_b, L_{\Sigma})} \PPindt{(\Sigma, b)}{\varphi_{\alpha} = 1} \leq \alpha.
        $$ \label{thm:critadi}
        \item Assume the minimal spectral gap $\underline{\lambda}_r>0$. With critical value 
        $$
            \kappa_{\alpha} = h^{2\beta_{\Sigma}}\left(c_1  \underline{\lambda}_r^{-1} L_{\Sigma}^2 + c_2 L_b^2 n^{-1}\right)\alpha^{-1},
        $$ 
        the test $\varphi_{\alpha}$ has level $\alpha$ uniformly over $\mathcal{H}_0^{\mathrm{gap}}(r, \beta_b, \beta_{\Sigma}, L_b, L_{\Sigma}, \underline{\lambda}_r)$: 
        $$
            \sup_{(\Sigma, b) \in \mathcal{H}_0^{\mathrm{gap}}(r,\beta_b, \beta_{\Sigma}, L_b, L_{\Sigma}, \underline{\lambda}_r)} \PPindt{(\Sigma, b)}{\varphi_{\alpha} = 1} \leq \alpha.
        $$ \label{thm:critadii}
    \end{enumerate}
\end{theorem}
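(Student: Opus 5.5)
The plan is to combine Theorem~\ref{thm:UBad} with Markov's inequality; nothing beyond the population bound is needed. The statistic $T_{n,h}$ defined in \eqref{eq:Statistic} is non-negative, since each $\hat{\Sigma}^{kh}$ is a sum of outer products and hence positive semi-definite, so all its eigenvalues are non-negative. Therefore, for any $(\Sigma,b)$ in the respective null class and any $\kappa_\alpha>0$,
\begin{align*}
    \PPindt{(\Sigma,b)}{\varphi_\alpha = 1} = \PPindt{(\Sigma,b)}{T_{n,h} > \kappa_\alpha} \leq \kappa_\alpha^{-1}\,\mathbb{E}_{(\Sigma,b)}\left[T_{n,h}\right].
\end{align*}

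The key step is to make the implicit constant in Theorem~\ref{thm:UBad} explicit and uniform over the null class. Tracing the proof of that theorem, the bound comes from \eqref{eq:TUB}, \eqref{Eq:EigenvalueProjection}, Propositions~\ref{prop:biash0ad} and \ref{prop:stocherrorh0ad}, and Lemma~\ref{cor:MCor3.2}. The constants in the two propositions depend only on $d$ and on the constant in Assumption~\ref{ass:nh}; they do not depend on the particular $(\Sigma,b)$. Lemma~\ref{cor:MCor3.2} holds with explicit constants $1$ and $2$. Hence there exist $c_1,c_2>0$, depending only on $d$ and on the constant in Assumption~\ref{ass:nh}, such that for all $(\Sigma,b)\in\mathcal{H}_0(r,\beta_b,\beta_\Sigma,L_b,L_\Sigma)$
\begin{align*}
    \mathbb{E}_{(\Sigma,b)}\left[T_{n,h}\right] \leq h^{\beta_\Sigma}\left(c_1 L_\Sigma + c_2 L_b^2 n^{-1} h^{\beta_b}\right).
\end{align*}
Under $\mathcal{H}_0^{\mathrm{gap}}$ the analogous bound $h^{2\beta_\Sigma}(c_1\underline{\lambda}_r^{-1}L_\Sigma^2 + c_2 L_b^2 n^{-1})$ holds. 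Both reductions use $\beta_b\geq\beta_\Sigma$ and $h\leq 1$, as in the proof of Theorem~\ref{thm:UBad}.

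Inserting the stated $\kappa_\alpha$ then gives $\PP_{(\Sigma,b)}(\varphi_\alpha=1)\leq\alpha$ for every element of the class, and taking the supremum proves both parts. The degenerate case $\kappa_\alpha=0$, which occurs when $L_\Sigma=L_b=0$, needs a separate remark. In that case $\Sigma$ is constant, so $\lambda_{r+1}(\Sigma_{\avg}^{kh})=0$, and the drift terms vanish. The population bound then forces $\mathbb{E}[T_{n,h}]=0$, so $T_{n,h}=0$ almost surely and the test never rejects.

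The main obstacle is the uniformity of the constants in Propositions~\ref{prop:biash0ad} and \ref{prop:stocherrorh0ad}. I expect to check in the appendix bounds \eqref{eq:EWT1Proj}--\eqref{eq:EWT9Proj} that the cross terms of the form $L_b n^{-1/2}h^{\beta_b+1}(\tr_{>r}(\Sigma_{\avg}^{kh}))^{1/2}$ are absorbed by AM--GM into the two leading terms. This absorption should use $\tr_{>r}(\Sigma_{\avg}^{kh})\leq (d-r)\lambda_{r+1}(\Sigma_{\avg}^{kh})$, which costs only a factor depending on $d$. Once that is confirmed, the Markov step is routine.
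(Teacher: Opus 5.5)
Your proposal is correct and follows the paper's proof. The paper applies Markov's inequality to the non-negative statistic $T_{n,h}$, invokes Theorem~\ref{thm:UBad} with the constants adjusted (uniform over the null class, depending only on $d$ and the constant in Assumption~\ref{ass:nh}), and then plugs in the stated $\kappa_\alpha$. Your separate treatment of $\kappa_\alpha=0$ is unnecessary: Definition~\ref{defi:null} requires $L_\Sigma>0$ (and $\underline{\lambda}_r>0$ in the gap case), so $\kappa_\alpha>0$ always.
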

\vspace{-0.7cm}
\begin{proof}
    To establish Part~\ref{thm:critadi}, consider $(\Sigma, b) \in \mathcal{H}_0\left(r, \beta_b, \beta_{\Sigma}, L_b, L_{\Sigma}\right)$. Markov's inequality yields
    \begin{align*}
    \PPind{(\Sigma, b)}{\varphi_{\alpha} =1} &= \PPind{(\Sigma, b)}{T_{n,h}>\kappa_{\alpha}}\leq \kappa_{\alpha}^{-1}\EWind{(\Sigma, b)}{T_{n,h}}.
    \intertext{Assuming $1 \geq \beta_b \geq \beta_{\Sigma} > 0$ and applying Theorem~\ref{thm:UBad}, it follows that} 
        \PPind{(\Sigma, b)}{\varphi_{\alpha} =1} &\leq \kappa_{\alpha}^{-1}\big(h^{\beta_{\Sigma}} \big(c L_{\Sigma}  + c^{\prime} L_b^2 n^{-1}h^{\beta_b}\big)\big), 
    \end{align*}
    where we adjusted the constants $c,\ c^{\prime} > 0$ to conclude Part~\ref{thm:critadi}.
    The claim is an immediate consequence of the choice of $\kappa_{\alpha}$ in the theorem.
    
    \indent The spectral gap case~\ref{thm:critadii} follows by the same arguments. 
\end{proof}

\begin{remark}
    The test $\varphi_{\alpha}$ in \eqref{eq:Test} controls the level $\alpha$ in a non-asymptotic sense, but is conservative because the numerical constants are derived from less precise upper bounds. For this reason, we state only the rates determined by the corresponding H\"older-regularity conditions and omit the constants, which can be found in the proofs. Moreover, the critical values grow proportionally to $d-r$ as the dimension increases. In genuinely high-dimensional settings, such scaling may become suboptimal, and additional structural assumptions on $\Sigma$ would be required to avoid ill‑posedness of the testing problem. 
    
    \indent Despite the conservative nature of the constants, the structural composition of the critical value becomes transparent. The terms $L_{\Sigma}h^{\beta_{\Sigma}}$ and $\underline{\lambda}_r^{-1}L_{\Sigma}^2h^{2\beta_{\Sigma}}$ result
    from Lemma~\ref{cor:MCor3.2} depending on the presence or absence of a spectral gap $\underline{\lambda}_r$, respectively. The inclusion of a (non-constant) drift term contributes additional components of order $n^{-1}h^{\beta_b}$ and $n^{-1}$, depending on whether a spectral gap exists. All remaining contributions are, under our assumptions, dominated by these principal terms and are therefore negligible in the overall scaling.
\end{remark}

\subsection{Power of the test} \label{sec:alternative}
In this section, we analyse the power of the test $\varphi_{\alpha}$ and show that, under suitable smoothness assumptions on $b$ and $\Sigma$, it is consistent as $n \to \infty$ for local alternatives whose $(r+1)$th eigenvalues exceed the chosen critical values. In other words, even if the $(r+1)$th eigenvalues of the spot covariance matrix $\Sigma$ tend to zero, the test still achieves asymptotic power one, provided they remain above some critical value. This establishes what is known as a \emph{separation rate} between null hypotheses and alternatives in non-parametric statistics, or equivalently a \emph{detection rate} in signal processing and learning. 

\indent Analogously to \cite{ReissWinkelmann2023}, in our setting it is not possible to define the set of local alternatives by imposing the condition $\int_{0}^{1} \lambda_{r+1}(\Sigma(t)) \inte t \geq v_n$ for some sequence $(v_n)_{n \in \mathbb{N}}$ representing the detection rate. Due to the Riemann-type sums underlying the test statistics $\varphi_\alpha$, such a global $\Lp{1}$-type requirement is too strong, as it is overly sensitive to highly oscillating and spike-like deviations that do not include a stable signal contribution in the discrete approximation underlying the statistic. We therefore follow them and adopt a weaker notion of deviation from zero. Instead of controlling the total integrated deviation globally, we assume that a rectangle of area $v$ can be placed between the graph of $\lambda_{r+1}(\Sigma(t))$ and the $t$-axis. This condition formalises the requirement that the function exhibits a sufficiently persistent deviation from zero on some interval, in the sense that both the magnitude and the length of the deviation jointly exceed a minimal detection level. The required condition can be interpreted as a localised, interval-based analogue of a weak $\Lp{1}$-norm over intervals. 

\indent Building on this notion of a localised deviation from zero, we now formally define the corresponding class of local alternatives in the next definition.

\begin{definition}
    For $r \in \ldbrack 0, d-1\rdbrack$, $\beta_b \in(0,1]$, $L_b \geq 0$, $\hbar \in (0,1)$ and $v,C > 0$ consider the set of alternatives 
    \begin{align*}
        &\mathcal{H}_1(r, \beta_b, L_b, \hbar, v, C) \\
        &:= \bigg\{ (\Sigma, b) \in C \big([0,1]; \mathbb{R}^{d \times d}_{\spd}\big) \times \mathcal{H}^{\beta_b}(L_b) \, \bigg| \, \sup_{I \subseteq [0,1], \, |I| \geq \hbar} |I| \min_{t \in I} \lambda_{r+1}\left(\Sigma(t)\right) \geq v, \tr\bigg(\int_{0}^{1} \Sigma(t) \inte t\bigg) \leq C \bigg\},
    \end{align*} 
    where $|I|$ denotes the length of the interval $I \subseteq [0,1]$. 
\end{definition}  

\begin{remark}
    The alternative $\mathcal{H}_1$ neither involves a $\beta_{\Sigma}$-smoothness constraint nor a spectral gap condition. 
\end{remark}

In the following, we make use of the decomposition $\Delta X_{ik} = \Delta \tilde{X}_{ik} + B_{ik}$, where $\Delta \tilde{X}_{ik}$ and $B_{ik}$ are defined as in Notation~\ref{defi:form}, for $i \in \ldbrack 1, nh \rdbrack$ and $k \in \ldbrack 0, h^{-1}-1 \rdbrack$. For notational convenience, we write $\mathbb{E}_{(\Sigma, b)}$ and $\mathbb{P}_{(\Sigma, b)}$ for expectation and probability under the alternative indexed by $(\Sigma, b)$. Furthermore, to allow for local alternatives, we introduce an $n$-dependence in both the level-$\alpha$ test and the block length by considering $\varphi_{\alpha,n}$ and $h_n$, respectively.

First, we establish that the test is consistent under the stated assumptions. 

\indent For the following theorem and corollary, we denote by $C_{d,r+1} > 1$ the constant from Corollary S.9 in \cite{ReissWinkelmann2023}, which depends on $d$ and $r+1$. 

\begin{theorem}\label{thm:h1}
    Consider for sample size $n$ the asymptotic test $\varphi_{\alpha,n}$ from Equation \eqref{eq:Test} with block sizes $h_n$ and critical values 
    $$\kappa_{\alpha,n} = c_{n}\big(n^{-1}h_n^{-2} + L_b n^{-1/2} h_n^{\beta_b - 1/2}\big)$$ 
    for some sequence $(c_{n})_{n \in \mathbb{N}}$ such that $c_n \rightarrow \infty$ for $n \rightarrow \infty$. Assume $h_n \rightarrow 0$ as $n \rightarrow \infty$ and that the number of observations per block satisfies $nh_n \geq 2(r+1)C_{d,r+1}$ with a fixed constant $C_{d,r+1}>1$ only depending on $d$ and $r+1$. Then, $\varphi_{\alpha,n}$ is asymptotically consistent over the local alternatives $\mathcal{H}_1(r, \beta_b, L_b, \hbar_n, v_n, C)$, provided $\hbar_n / h_n \rightarrow \infty$, and the rate $v_n$ is larger than a constant multiple of $\kappa_{\alpha,n}$: 
    $$
        \exists c^{\prime}>0: \lim_{n \rightarrow \infty} \inf \left\{\PPind{(\Sigma, b)}{\varphi_{\alpha,n} = 1} = 1 \left| (\Sigma,b) \in \mathcal{H}_1(r, \beta_b, L_b, \hbar_n, c^{\prime} \kappa_{\alpha,n}, C)\right.\right\}.
    $$
\end{theorem}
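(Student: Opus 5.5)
Under an alternative $(\Sigma,b)\in\mathcal{H}_1(r,\beta_b,L_b,\hbar_n,c'\kappa_{\alpha,n},C)$ we show that $T_{n,h_n}$ is, with probability tending to one, at least a constant multiple of $v_n$ minus a term of order $n^{-1}h_n^{-2}+L_b n^{-1/2}h_n^{\beta_b-1/2}$. Choosing $c'$ large then gives $T_{n,h_n}>\kappa_{\alpha,n}$, because $c_n\to\infty$ makes the error negligible relative to $\kappa_{\alpha,n}$.

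\emph{Step 1 (localisation).} Pick an interval $I$ with $|I|\geq\hbar_n$ and $|I|\min_{t\in I}\lambda_{r+1}(\Sigma(t))\geq v_n$. Since $\hbar_n/h_n\to\infty$, the blocks $I_k\subseteq I$ cover a length of at least $|I|-2h_n\geq|I|/2$ for large $n$. On each such block, concavity of $\lambda_{r+1}$ over subspaces (the variational characterisation, or \cite[Lemma 2.2]{ReissWinkelmann2023}) gives $\lambda_{r+1}(\Sigma^{kh}_{\avg})\geq \min_{t\in I}\lambda_{r+1}(\Sigma(t))$. Summing over these blocks yields $\sum_{k:I_k\subseteq I} h\lambda_{r+1}(\Sigma^{kh}_{\avg})\geq v_n/2$. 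Blocks outside $I$ contribute nonnegatively and can be dropped.

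\emph{Step 2 (drift-free part).} Write $\hat\Sigma^{kh}=\tilde\Sigma^{kh}+R^{kh}$, where $\tilde\Sigma^{kh}$ is the re-centred estimator built from the martingale increments $\Delta\tilde X_{ik}$ and $R^{kh}$ contains the drift contributions. Write $\tilde\Sigma^{kh}=h^{-1}\sum_i(\Delta\tilde X_{ik})^{\otimes2}-n\,\overline{\Delta\tilde X_k}^{\otimes2}h$. The first summand is a Wishart-type convex combination of the kind treated in \cite{ReissWinkelmann2023}. Their Corollary S.9, valid when $nh_n\geq 2(r+1)C_{d,r+1}$, bounds the expected $(r+1)$th eigenvalue from below by a constant times $\lambda_{r+1}(\Sigma^{kh}_{\avg})$. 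The rank-one subtraction lowers $\lambda_{r+1}$ at most to $\lambda_{r+2}$, so it is safer to control it in norm: its expectation has norm at most $h^{-1}n^{-1}\cdot h\tr(\Sigma^{kh}_{\avg})$. Summed with weight $h$ and using $\tr\int\Sigma\leq C$, this gives an $\mathcal{O}(n^{-1}h^{-1})\subseteq\mathcal{O}(n^{-1}h^{-2})$ contribution. The fluctuation of $\sum_k h\lambda_{r+1}(\cdot)$ around its mean is controlled by a variance bound in the spirit of \eqref{eq:EWT4Proj} and \eqref{eq:EWT7Proj}, without projections: it has order $(nh)^{-1/2}$ per block, and independence across blocks with Chebyshev gives $o_{\mathbb{P}}$ of the leading term. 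Alternatively one can absorb it into $n^{-1}h^{-2}$ after using $\lambda_{r+1}$ Lipschitz and Markov with the $c_n\to\infty$ slack.

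\emph{Step 3 (drift perturbation).} By Weyl, $\lambda_{r+1}(\hat\Sigma^{kh})\geq\lambda_{r+1}(\tilde\Sigma^{kh})-\Vert R^{kh}\Vert$. The term $R^{kh}$ consists of the centred drift square $h^{-1}\sum_i(B_{ik}-\overline{B_k})^{\otimes2}$, which is positive semidefinite and hence only helps (it can be dropped by Weyl monotonicity), and the cross terms $h^{-1}\sum_i(B_{ik}-\overline{B_k})\odot(\Delta\tilde X_{ik}-\overline{\Delta\tilde X_k})$. Since $B_{ik}-\overline{B_k}$ is a difference of integrals of $b$ within the block, Assumption~\ref{ass:hölderad} gives $\EWt{\Vert B_{ik}-\overline{B_k}\Vert^2}^{1/2}\lesssim L_b n^{-1}h^{\beta_b}$. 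Cauchy--Schwarz over $i$ then bounds the cross term's expected norm by $h^{-1}(nh)^{1/2}L_bn^{-1}h^{\beta_b}\cdot(h\tr\Sigma^{kh}_{\avg})^{1/2}$. Summing with weight $h$ and applying Cauchy--Schwarz over $k$ together with $\sum_k h\tr\Sigma^{kh}_{\avg}\leq C$ gives
\[
\sum_k h\,\EWt{\Vert R^{kh}_{\mathrm{cross}}\Vert}\lesssim L_b n^{-1/2}h^{\beta_b-1/2}C^{1/2}.
\]
This is exactly the second part of $\kappa_{\alpha,n}$. Markov's inequality then shows that it is $o_{\mathbb{P}}(\kappa_{\alpha,n})$, uniformly over the alternative class.

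Combining the steps, $T_{n,h_n}\geq c\,v_n-\mathcal{O}_{\mathbb{P}}(n^{-1}h_n^{-2}+L_bn^{-1/2}h_n^{\beta_b-1/2})$ uniformly. With $v_n=c'\kappa_{\alpha,n}$ and $c'$ chosen so that $c\,c'>2$, we get $\mathbb{P}(T_{n,h_n}\leq\kappa_{\alpha,n})\to0$ uniformly. The main obstacle is Step 2: establishing the lower bound for $\lambda_{r+1}$ of the re-centred Gaussian sum uniformly in $\Sigma$, since re-centring destroys the pure Wishart structure. I would handle it by an orthogonal (Helmert) transformation of the $nh$ Gaussian increments within a block, which turns the re-centred sum into a sum of $nh-1$ independent Gaussian outer products up to the inhomogeneity of $\Sigma$ within the block. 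Corollary S.9 then applies with $nh-1\geq(r+1)C_{d,r+1}$.
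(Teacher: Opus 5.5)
\textbf{Where the proposal matches the paper.} Your reduction is the paper's. Dropping the positive semi-definite drift square by Weyl is Lemma~\ref{lem:decomph1}. Bounding $n(\overline{\Delta\tilde X_k})^{\otimes 2}$ and the cross term in expectation, then applying Markov with the slack $c_n\to\infty$, is Lemma~\ref{lem:alternative2ndterm}. This is exactly what produces the two parts of $\kappa_{\alpha,n}$.

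\textbf{The gap is in Step~2.} You need a lower bound for $\sum_k h_n\lambda_{r+1}\big(h_n^{-1}\sum_i(\Delta\tilde X_{ik})^{\otimes 2}\big)$, and your fluctuation control does not deliver it. Both tools you propose control $\big\Vert h^{-1}\sum_i(\Delta\tilde X_{ik})^{\otimes 2}-\Sigma^{kh}_{\avg}\big\Vert$ through $\tr\Sigma^{kh}_{\avg}$. These are the variance bounds \eqref{eq:EWT4Proj} and \eqref{eq:EWT7Proj} without projections, and Lipschitz continuity of $\lambda_{r+1}$ plus Markov. The trace is driven by the top $r$ eigenvalues, which $\mathcal{H}_1$ only restricts through $\tr\int_0^1\Sigma\le C$. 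So these bounds are additive and do not scale with the signal $\lambda_{r+1}$.

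A concrete failure: take $L_b=0$, $r=1$, $\Sigma\equiv\mathrm{diag}(1,v_n,0,\ldots)$ and $h_n=n^{-1/5}$. Then $\kappa_{\alpha,n}=c_n n^{-3/5}$, and the signal can be as small as $c'\kappa_{\alpha,n}$. Your bounds give fluctuations of order $n^{-1/2}$ (Chebyshev over independent blocks) or $(nh_n)^{-1/2}=n^{-2/5}$ (Markov). Neither is $o_{\mathbb{P}}(\kappa_{\alpha,n})$ when $c_n$ grows slowly. In particular, absorbing the Markov term into $n^{-1}h_n^{-2}$ works only if $h_n\lesssim n^{-1/3}$. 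If $\Sigma$ is spiky inside blocks, $h^{-2}\sum_i\big(\tr\int_{I_{ik}}\Sigma\big)^2$ need not be small at all.

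\textbf{What is needed instead.} You need a multiplicative lower-tail (small-ball) bound for $\lambda_{r+1}$ of the Wishart-type block sums in terms of $\min_{t\in I}\lambda_{r+1}(\Sigma(t))$. This is then combined with a Chernoff argument over the $\gtrsim\hbar_n/h_n\to\infty$ independent blocks inside $I$. That is \cite[Theorem 3.5]{ReissWinkelmann2023}, which the paper imports as Lemma~\ref{lem:alternativeMR}. It is also the real reason the hypothesis $\hbar_n/h_n\to\infty$ is needed, beyond covering $I$ by blocks. Since you already move the rank-one re-centring term into the remainder, you can apply that result directly to the non-recentred sum. Your ``main obstacle'' is therefore misplaced. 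The Helmert transformation is unnecessary, and it would not give independent vectors anyway when $\Sigma$ varies within a block, which $\mathcal{H}_1$ permits.

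\textbf{Two smaller errors.}
\begin{enumerate}
\item In Step~1, $\lambda_{r+1}$ is not concave, and $\lambda_{r+1}(\Sigma^{kh}_{\avg})\ge\min_{t\in I}\lambda_{r+1}(\Sigma(t))$ is false: for $d=2$, $r=0$, averaging $e_1e_1^\top$ and $e_2e_2^\top$ gives $\lambda_1=1/2<1$. The bound does hold with a factor $(d-r)^{-1}$. With $\lambda:=\min_{t\in I}\lambda_{r+1}(\Sigma(t))$ one has $\Sigma(t)\succcurlyeq\lambda P_t$, where $P_t$ projects onto the top $r+1$ eigenvectors of $\Sigma(t)$. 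The averaged projection $\bar P$ has trace $r+1$ and eigenvalues in $[0,1]$, so $\lambda_{r+1}(\bar P)\ge(d-r)^{-1}$.
\item The re-centred identity has no factor $h$: $\tilde\Sigma^{kh}=h^{-1}\sum_i(\Delta\tilde X_{ik})^{\otimes 2}-n(\overline{\Delta\tilde X_k})^{\otimes 2}$. The subtracted term has expectation $(nh)^{-1}\Sigma^{kh}_{\avg}$. Your final order $\mathcal{O}(n^{-1}h^{-1})$ is nevertheless correct.
\end{enumerate}
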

\begin{proof}
    The proof is based on the following three-step strategy. First, Bonferroni's inequality is applied to derive a lower bound for the probability of interest. Second, convergence results for the resulting probabilities are established. Finally, the resulting convergence results are combined to conclude the proof of the theorem. The corresponding arguments are formalised in auxiliary lemmas stated and proved in Appendix~\ref{a:alternative}. In the proof below, we therefore only verify that the assumptions of these lemmas are satisfied in the present setting and subsequently combine the resulting upper bounds.
    
    Throughout the proof, we denote $\mathcal{H}_1(r, \beta_b, L_b, \hbar_n, c^{\prime} \kappa_{\alpha,n}, C)$ briefly by $\mathcal{H}_1$. Let $\check{\kappa}_{\alpha,n}$ and $\tilde{\kappa}_{\alpha,n}(\Sigma)$ be as introduced in Lemmas~\ref{lem:alternativeMR} and \ref{lem:alternative2ndterm} in the Appendix~\ref{a:alternative}. Since $(\Sigma,b) \in \mathcal{H}_1$, the quantity $\tilde{\kappa}_{\alpha,n}(\Sigma)$ introduced in Lemma~\ref{lem:alternative2ndterm} can be uniformly bounded over $(\Sigma, b) \in \mathcal{H}_1$. In particular, its dependence on $\Sigma$ is asymptotically negligible, so that $\tilde{\kappa}_{\alpha,n}(\Sigma) = \tilde{c} \kappa_{\alpha,n}$ for a sufficiently large constant $\tilde{c}>0$. Note that Lemma~\ref{lem:alternative2ndterm} applies, and by monotonicity of the power in the critical value, the power can only decrease when larger critical values are used. Moreover, Lemma~\ref{lem:alternativeMR} holds for some $\check{\kappa}_{\alpha,n}$, which allows us to fix $\check{\kappa}_{\alpha,n} = \check{c}\kappa_{\alpha,n}$ for some constants $\check{c} > 0$.
    Combining these with Lemma~\ref{lem:decomph1}, which provides the required decomposition via an application of Bonferroni's inequality, we obtain  
    \begin{align*}
        &\liminf_{n \rightarrow \infty} \ \inf_{(\Sigma,b) \in \mathcal{H}_1} \PPind{(\Sigma,b)}{\varphi_{\alpha,n} = 1} \\
        &\qquad \geq \liminf_{n \rightarrow \infty} \ \inf_{(\Sigma,b) \in \mathcal{H}_1} \bigg[ \mathbb{P}_{(\Sigma,b)}\bigg(\sum_{k=0}^{h_n^{-1}-1} \lambda_{r+1}\bigg(\sum_{i=1}^{nh_n} \big(\Delta \tilde{X}_{ik}\big)^{\otimes 2}\bigg) \geq c \kappa_{\alpha,n}\bigg)\\
        &\phantom{\geq} \qquad \qquad \qquad \qquad \qquad \qquad \qquad \qquad \qquad \qquad \qquad \qquad - \mathbb{P}_{(\Sigma,b)}\bigg(\sum_{k=0}^{h_n^{-1} - 1} h_n \big\Vert \mathcal{R}^{kh_n} \big\Vert \geq \check{c}\kappa_{\alpha,n}\bigg) \bigg] \\
        &\qquad\geq \liminf_{n \rightarrow \infty} \bigg[ \inf_{(\Sigma,b) \in \mathcal{H}_1} \mathbb{P}_{(\Sigma,b)}\bigg(\sum_{k=0}^{h_n^{-1}-1} \lambda_{r+1}\bigg(\sum_{i=1}^{nh_n} \big(\Delta \tilde{X}_{ik}\big)^{\otimes 2}\bigg) \geq c \kappa_{\alpha,n}\bigg) \\
        &\phantom{\geq}\qquad\qquad \qquad \qquad \qquad \qquad \qquad\qquad\qquad\qquad-\sup_{(\Sigma,b) \in \mathcal{H}_1}\mathbb{P}_{(\Sigma,b)}\bigg(\sum_{k=0}^{h_n^{-1} - 1} h_n \big\Vert \mathcal{R}^{kh_n} \big\Vert \geq \check{c}\kappa_{\alpha,n}\bigg) \bigg], 
    \end{align*}
    where $c := \tilde{c} + \check{c} > 0$. We deduce from Lemmas~\ref{lem:alternativeMR} and \ref{lem:alternative2ndterm}
    \begin{align*}
        \lim_{n \rightarrow \infty} \ \inf_{(\Sigma, b) \in \mathcal{H}_1} \PPind{(\Sigma, b)}{\varphi_{\alpha,n} = 1} = 1,
    \end{align*}
    which proves the theorem. 
\end{proof}

\begin{cor}\label{cor:h1}
    Consider the test $\varphi_{\alpha, n}$ defined in Equation \eqref{eq:Test} for sample size $n$, with block sizes $h_n$ and critical value $\kappa_{\alpha,n}$ from Theorem~\ref{thm:H0adapted}. Assume that $1 \geq \beta_{b} \geq \beta_{\Sigma} > 0$, $h_n \rightarrow 0$ as $n \rightarrow \infty$ and that the number of increments per block satisfies $n h_n \geq 2(r+1)C_{d,r+1}$. 
    \begin{enumerate}
        \item Without assuming a minimal spectral gap, if $L_b > 0$, choose 
        $$
            h_n \sim \left(L_{\Sigma}n \right)^{-1/(\beta_{\Sigma} + 2)} \vee \big(L_b^{-1} L_{\Sigma} n^{1/2}\big)^{-1/(\beta_{\Sigma}- \beta_b + 1/2)}.
        $$
        If $L_b = 0$, choose $h_n \sim \left(L_{\Sigma}n \right)^{-1/(\beta_{\Sigma} + 2)}$.
        \item Assume the minimal spectral gap $\underline{\lambda}_{r,n} > 0$, which may vary arbitrarily with $n$. If $L_b > 0$, choose 
        $$
            h_n \sim \left(\underline{\lambda}_{r,n}^{-1} L_{\Sigma}^2 n\right)^{-1/(2 \beta_{\Sigma} + 2)} \vee \big(\underline{\lambda}_{r,n}^{-1} L_{b}^{-1} L_{\Sigma}^2  n^{1/2}\big)^{-1/(2\beta_{\Sigma} - \beta_{b} + 1/2)}. 
        $$
        If $L_b = 0$, choose $h_n \sim \left(\underline{\lambda}_{r,n}^{-1} L_{\Sigma}^2 n\right)^{-1/(2 \beta_{\Sigma} + 2)}$. 
    \end{enumerate}
    Then, $\varphi_{\alpha,n}$ is asymptotically consistent over the local alternatives $\mathcal{H}_1(r, \beta_b, L_b, \hbar_n, v_n, C)$ provided $\hbar_n / h_n \rightarrow \infty$ and the rate $v_n$ is larger than a constant multiple of $\kappa_{\alpha,n}$ given in Theorem~\ref{thm:h1}: 
    $$
        \exists c^{\prime}>0: \lim_{n \rightarrow \infty} \inf\left\{\PPind{(\Sigma, b)}{\varphi_{\alpha,n} = 1} = 1 \left| (\Sigma,b) \in \mathcal{H}_1(r, \beta_b, L_b, \hbar_n, c^{\prime} \kappa_{\alpha,n},C)\right.\right\}.
    $$
\end{cor}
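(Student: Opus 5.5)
The corollary follows by combining Theorem~\ref{thm:H0adapted} (level control) with Theorem~\ref{thm:h1} (power), once the block size $h_n$ is tuned. I treat the two parts separately but along identical lines.

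\textbf{Step 1: level under the null.} The test with the critical value of Theorem~\ref{thm:H0adapted} has level $\alpha$. This is unchanged by letting $h_n$ depend on $n$, since that result is non-asymptotic.

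\textbf{Step 2: the power threshold.} Theorem~\ref{thm:h1} gives consistency once the critical value is of the order
$$
c_n\big(n^{-1}h_n^{-2}+L_b n^{-1/2}h_n^{\beta_b-1/2}\big)
$$
with $c_n\to\infty$. Consistency is monotone in the critical value: a larger critical value only requires a larger $v_n$. So I would use the proof of Theorem~\ref{thm:h1} with a critical value that is the maximum of the null value $\kappa_{\alpha,n}$ and this stochastic threshold, up to a slowly diverging factor. Its lemmas only need the critical value to dominate $n^{-1}h_n^{-2}+L_bn^{-1/2}h_n^{\beta_b-1/2}$ by a diverging factor.

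\textbf{Step 3: balancing in the case without spectral gap.} Equate the null critical value, of order $L_\Sigma h^{\beta_\Sigma}$, with each stochastic term. The drift piece $L_b^2n^{-1}h^{\beta_\Sigma+\beta_b}$ of the null value is dominated, since $n^{-1}\to0$.
\begin{itemize}
\item[(i)] $L_\Sigma h^{\beta_\Sigma}=n^{-1}h^{-2}$ gives $h=(L_\Sigma n)^{-1/(\beta_\Sigma+2)}$.
\item[(ii)] $L_\Sigma h^{\beta_\Sigma}=L_bn^{-1/2}h^{\beta_b-1/2}$ gives $h^{\beta_\Sigma-\beta_b+1/2}=L_bL_\Sigma^{-1}n^{-1/2}$, that is, $h=(L_b^{-1}L_\Sigma n^{1/2})^{-1/(\beta_\Sigma-\beta_b+1/2)}$.
\end{itemize}
Because $n^{-1}h^{-2}$ is decreasing in $h$ while $L_\Sigma h^{\beta_\Sigma}$ is increasing, taking the maximum $h_n$ of the two candidates makes $L_\Sigma h_n^{\beta_\Sigma}$ dominate $n^{-1}h_n^{-2}$ up to constants.

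The second term needs more care. One has $h^{\beta_b-1/2}/h^{\beta_\Sigma}=h^{-(\beta_\Sigma-\beta_b+1/2)}$, which is decreasing in $h$ when $\beta_\Sigma-\beta_b+1/2>0$. In that case the max choice again makes the null value dominate. If the exponent is nonpositive, the ratio is bounded for $h\le1$ and the drift term is harmless. I would check this sign split explicitly.

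If $L_b=0$, the second term vanishes and only candidate (i) remains.

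\textbf{Step 4: the spectral-gap case.} Replace $L_\Sigma h^{\beta_\Sigma}$ by $\underline{\lambda}_{r,n}^{-1}L_\Sigma^2h^{2\beta_\Sigma}$ and solve the same two equations. This yields exactly the exponents $1/(2\beta_\Sigma+2)$ and $1/(2\beta_\Sigma-\beta_b+1/2)$ stated in the corollary. Since $\underline{\lambda}_{r,n}$ is allowed to vary with $n$, I would note that Lemma~\ref{cor:MCor3.2} and Theorem~\ref{thm:H0adapted} hold for each fixed $n$, so level control stays uniform.

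\textbf{Step 5: checking the remaining hypotheses.} I need $h_n\to0$; both candidates tend to zero since their exponents are positive, and $h_n\to0$ is assumed anyway. I also need $nh_n\ge 2(r+1)C_{d,r+1}$, which is assumed. Finally, the factor $c_n$ can be absorbed by taking $v_n$ a slightly larger constant multiple, because $v_n\ge c'\kappa_{\alpha,n}$ with $\kappa_{\alpha,n}$ carrying the $\alpha^{-1}$ factor.

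\textbf{Main obstacle.} The delicate point is the comparison in Step 2. The corollary's critical value comes from the null theorem, without an explicit diverging $c_n$, so the divergence must come either from a choice of $h_n$ slightly larger than the balance point or from a constant absorbed in "$\sim$". I would first try to argue that the proof of Theorem~\ref{thm:h1} only needs the ratio of the critical value to the stochastic terms to be bounded below by a sufficiently large constant. That suffices after enlarging the constants $c_1,c_2$ via the "$\sim$" in the choice of $h_n$.
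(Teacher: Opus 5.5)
Your route is the paper's: choose $h_n$ so that the critical value of Theorem~\ref{thm:H0adapted} is of the order of $n^{-1}h_n^{-2}+L_bn^{-1/2}h_n^{\beta_b-1/2}$, then rerun Lemmas~\ref{lem:decomph1}--\ref{lem:alternative2ndterm}. Your balancing reproduces the stated exponents.

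The gap is in how you handle the diverging factor $c_n$. The fix you propose in your last paragraph does not work. Lemma~\ref{lem:alternative2ndterm} is a Markov bound. If $\tilde{\kappa}_{\alpha,n}$ is only $K$ times the expectation bound $\overline{\Sigma}n^{-1}h_n^{-2}+2L_b\overline{\Sigma}^{1/2}n^{-1/2}h_n^{\beta_b-1/2}$, you only get $\mathbb{P}(\sum_k h_n\Vert\mathcal{R}^{kh_n}\Vert\geq\tilde{\kappa}_{\alpha,n})\leq K^{-1}$. The limiting power is then at least $1-K^{-1}$, not $1$. Enlarging $c_1,c_2$ or the constants hidden in $h_n\sim\cdots$ changes $K$ but never makes it diverge. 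For the same reason, your claim in Step~5 that $c_n$ can be absorbed into a fixed $c'$ via the $\alpha^{-1}$ factor is false, since $c_n\to\infty$. The underlying misconception is that the test's own threshold must dominate the stochastic terms by a diverging factor. In Lemma~\ref{lem:decomph1} the threshold enters only through $\kappa_{\alpha,n}+\tilde{\kappa}_{\alpha,n}$. The diverging factor is needed only in the auxiliary $\tilde{\kappa}_{\alpha,n}$, and it must then be paid for by $v_n$.

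That is exactly how the corollary is formulated. The alternatives are $\mathcal{H}_1(r,\beta_b,L_b,\hbar_n,c'\kappa_{\alpha,n},C)$, where $\kappa_{\alpha,n}=c_n(n^{-1}h_n^{-2}+L_bn^{-1/2}h_n^{\beta_b-1/2})$ is the value from Theorem~\ref{thm:h1}, not the null value. Write $\kappa^{0}_{\alpha,n}$ for the critical value of Theorem~\ref{thm:H0adapted}. What you need from the balancing is the reverse of the domination you stress in Step~3, namely $\kappa^{0}_{\alpha,n}\lesssim n^{-1}h_n^{-2}+L_bn^{-1/2}h_n^{\beta_b-1/2}$.

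This does hold at the maximum of your candidates (i) and (ii). At that point one of the two balance equations holds up to constants, and the $L_b^2n^{-1}$-part of $\kappa^{0}_{\alpha,n}$ is $o(n^{-1}h_n^{-2})$. Hence $\kappa^{0}_{\alpha,n}\leq\kappa_{\alpha,n}$ for large $n$, so $\{T_{n,h_n}>\kappa_{\alpha,n}\}\subseteq\{T_{n,h_n}>\kappa^{0}_{\alpha,n}\}$, and Theorem~\ref{thm:h1} gives the claim directly. Your Step~2 maximum already does this, since it equals $\kappa_{\alpha,n}$ eventually; the ``main obstacle'' disappears. A rate $v_n\sim\kappa^{0}_{\alpha,n}$ without $c_n$ cannot be obtained from this argument.

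Finally, Step~5's assertion that both candidate block lengths tend to zero contradicts your own sign split. If $\beta_{\Sigma}-\beta_b+1/2\leq 0$ (e.g.\ $\beta_b=1$, $\beta_{\Sigma}<1/2$), candidate (ii) diverges or is undefined. In that case you must take candidate (i) alone, or its spectral-gap analogue, which is harmless because the drift term is then negligible.
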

\begin{proof}
    In the proof we denote $\mathcal{H}_1(r, \beta_b, L_b, \hbar_n, c^{\prime} \kappa_{\alpha,n}, C)$ briefly by $\mathcal{H}_1$. Under the assumption that $(\Sigma, b) \in \mathcal{H}_1$, the critical value $\kappa_{\alpha,n}$ from Theorem~\ref{thm:h1} is of the same order as the critical values obtained in Theorem~\ref{thm:H0adapted}. Therefore, by Lemmas~\ref{lem:decomph1}--\ref{lem:alternative2ndterm} we have
    $$
        \lim_{n \rightarrow \infty} \ \inf_{(\Sigma,b) \in \mathcal{H}_1} \PPind{(\Sigma,b)}{\varphi_{\alpha,n} = 1} = 1,
    $$
    which completes the proof.
\end{proof}
\newpage
\begin{remark} 
    Assume that $1 \geq \beta_{b} \geq \beta_{\Sigma} > 0$ and $L_{b} \geq 0,\ L_{\Sigma} > 0$. 
    \begin{enumerate}
    \item Without a minimal spectral gap and assume $L_b > 0$, the detection rate is given by 
    $$
        v_n = c \left(\left(L_{\Sigma}^{2} n^{-\beta_{\Sigma}}\right)^{1/(\beta_{\Sigma} + 2)} \wedge \big(L_b^{\beta_{\Sigma}} L_{\Sigma}^{-(\beta_{b} - 1/2)} n^{-\beta_{\Sigma}/2}\big)^{1/(\beta_{\Sigma} - \beta_{b} + 1/2)}\right)
    $$
    for a sufficiently large constant $c > 0$, and with $L_b > 0$. In the special case $L_b = 0$, the block length $h_n$ from Corollary~\ref{cor:h1} satisfies $h_n \sim (L_{\Sigma} n)^{-1/(\beta_{\Sigma} + 2)}$, which implies $v_n = c (L_{\Sigma}^2 n^{-\beta_{\Sigma}})^{1/(\beta_{\Sigma} + 2)}$, for a (possibly different) sufficiently large constant $c > 0$. 
    \item Suppose that the minimal spectral gap $\underline{\lambda}_{r,n} > 0$ exists, which may vary arbitrarily with $n$, and assume further that $L_b > 0$. Then, the detection rate $v_n$ is given by 
    $$
         v_n = c \left(\big(\underline{\lambda}_{r,n}^{-1} L_{\Sigma}^{2} n^{-\beta_{\Sigma}}\big)^{1/( \beta_{\Sigma} + 1)} \wedge \big(\big(\underline{\lambda}_{r,n}^{-1} L_{\Sigma}^{2}\big)^{-(\beta_{b} - 1/2)}  \left(L_{b}^{-2} n\right)^{-\beta_{\Sigma}}\big)^{1 /(2 \beta_{\Sigma} - \beta_{b} + 1/2)}\right),
    $$
    where $c > 0$ denotes a sufficiently large constant. In the case $L_b = 0$, the block length $h_n$ from Corollary~\ref{cor:h1} satisfies $h_n \sim (\underline{\lambda}_{r,n}^{-1} L_{\Sigma} n)^{-1/(2 \beta_{\Sigma} + 2)}$, which yields $v_n = c (\underline{\lambda}_{r,n}^{-1}L_{\Sigma}^2n^{-\beta_{\Sigma}})^{1/(\beta_{\Sigma} + 1)}$ for a (possibly different) sufficiently large constant $c>0$. 
    \end{enumerate}
\end{remark}   

\begin{example}
    To compare the detection rates obtained in Corollary~\ref{cor:h1} with those in \cite[Corollary 4.3]{reiss2026rank}, we restrict attention to the case of a constant drift term, i.e.\ $L_b = 0$. In line with Example 4.4 therein, we assume $\beta_{\Sigma} = 0.5$ and $L_{\Sigma} \sim 1$. Under these assumptions, we choose the block length $h \sim n^{-2/5} \vee (\underline{\lambda}_{r}^{-1} n)^{-1/3}$, which leads to the detection rate $v_n \sim n^{-1/5} \wedge (\underline{\lambda}_{r}^{-2/3}n^{-1/3})$. These rates coincide with those reported in their example. For further interpretation, we refer to the discussion therein. 
\end{example}

\section{Numerical simulation}\label{sec_nu}
In this section, we extend Example 3.10 in \cite{ReissWinkelmann2023} to analyse the impact of the drift term and the magnitude of the second eigenvalue of the spot covariance matrix on the power behaviour of rank tests. Our focus lies on comparing the test based on the re-centred covariance estimator with the test based on the second moment estimator. The simulations of the underlying stochastic differential equation are based on the Euler--Maruyama scheme with time stepping $\Delta t = (50n)^{-1}$. 

\begin{example} \label{ex:3.10MLLinDrift}
    We consider a two-dimensional It\^o semi-martingale  with a linear drift. Define 
    $$b(t) := \begin{pmatrix}
        -20 t \\ -100t
    \end{pmatrix}, \ v_1 (t) := \begin{pmatrix}\underline{\lambda}_1^{1/2}\\ (h^{2 \beta_{\Sigma}}/\underline{\lambda}_1)^{1/2}\sin(2 \pi t /h)\end{pmatrix} \text{, } v_2(t) := \begin{pmatrix} (h^{2 \beta_{\Sigma}}/\underline{\lambda}_1)^{1/2} \sin(2 \pi t/h)\\ - \underline{\lambda}_1^{1/2}\end{pmatrix}.$$ 
    We set $\Sigma(t) := v_1(t)v_1(t)^\top + \gamma v_2(t)v_2(t)^\top$, where $\gamma \in [0,1]$. Since $v_1(t)$ and $v_2(t)$ are orthogonal, they form an eigenbasis of $\Sigma(t)$ with eigenvalues $\lambda_1(\Sigma(t)) = \underline{\lambda}_1 + (h^{2 \beta_{\Sigma}}/\underline{\lambda}_1) \sin^2(2 \pi t/h)$ and $\lambda_2(\Sigma(t)) = \gamma \lambda_1(\Sigma(t))$. 
    
    \indent For $\gamma \in (0,1]$, we generate spot covariance matrices under the alternative $\mathcal{H}_1$ : $r=2$ corresponding to signal strengths $h^{-1} \int_{0}^{h} \lambda_2(\Sigma(t)) \inte t = \gamma(\underline{\lambda}_1 + h^{2 \beta_{\Sigma}} / (2 \underline{\lambda}_1))$. 
\end{example} 

\indent For $\beta_{\Sigma} = 0.5$ Figure~\ref{fig:LinDriftEx320_beta05} displays contour plots of the power for the test based on the second moment estimator $\hat{\Sigma}^{kh}_{2\mathrm{nd}}$, defined in \eqref{eq:secmomest}, i.e. 
\begin{align}\label{eq:testsec}
    \varphi_{\alpha}^{\textrm{2nd}} := \indikator\bigg(\sum_{k=0}^{h^{-1}-1} h \lambda_{r+1}\big(\hat{\Sigma}^{kh}_{\textrm{2nd}}\big) > \kappa_{\alpha}^{\textrm{2nd}} \bigg),
\end{align}
where $\kappa_{\alpha}^{\textrm{2nd}}$ denotes the critical value corresponding to $\alpha > 0$, and for the test based on the re-centred covariance estimator $\hat{\Sigma}^{kh}$, defined in \eqref{eq:Test}. 

For each pair $(\underline{\lambda}_1, \text{signal strength})$, we compute the empirical power, that is, the probability that the level-$\alpha$ test rejects the null hypothesis at significance level $\alpha = 0.1$. The power is reported as a function of the spectral gap $\underline{\lambda}_1$ (horizontal axis) and the signal strength, i.e.\ the time-averaged second eigenvalue, (vertical axis). The simulations are based on $n = 1{,}000$ observations and $1{,}000$ Monte Carlo replications with $h=0.02$, corresponding to a local sample size of $nh = 20$. 

\indent The critical values for both estimators are obtained from $1{,}000$ Monte Carlo replications for each value of $\underline{\lambda}_1$. As shown in Table~\ref{tab:crit}, the critical values $\kappa_{0.1}$ are uniformly smaller than $\kappa_{0.1}^{\mathrm{2nd}}$. The covariance-based test \eqref{eq:Test} displays a faster increase in power, achieving power close to one already for smaller signal strength, while the second moment-based test \eqref{eq:testsec} requires larger signals. Overall, the covariance-based test \eqref{eq:Test} dominates the second moment-based test \eqref{eq:testsec} over the entire parameter range considered with regards to power. In both cases, the power is increasing in the signal strength as well as in the spectral gap.  

\begin{figure}[t]
    \centering
    \includegraphics[width=\linewidth]{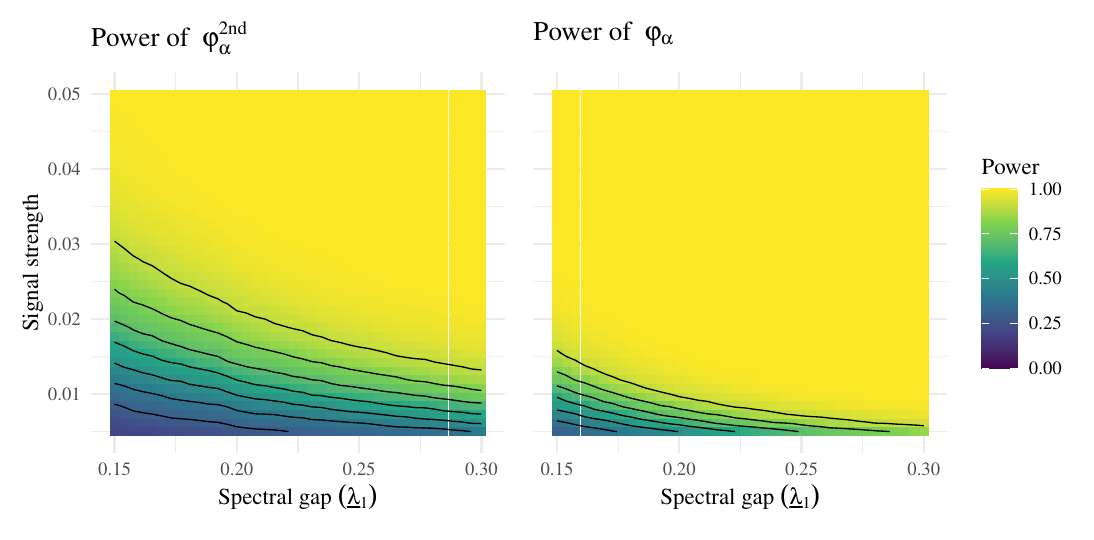}
    \caption{Power of the two rank tests as a function of signal strength and spectral gap $\underline{\lambda}_1$ in Example~\ref{ex:3.10MLLinDrift} with $\beta_{\Sigma} = 0.5$. The black curves represent contour lines of the power function corresponding to the levels $0.1, 0.2, \ldots, 0.9$. \emph{Left:} Rank test based on the second moment estimator given by \eqref{eq:testsec}. \emph{Right:} Rank test based on the covariance estimator given by \eqref{eq:Test}.}
    \label{fig:LinDriftEx320_beta05}
\end{figure}

\begin{table}[h]
    \centering
    \caption{Representative subset of critical values for $\kappa_{0.1}^{\textrm{2nd}}$ and $\kappa_{0.1}$, computed for Example~\ref{ex:3.10MLLinDrift} with $\beta_{\Sigma} = 0.5$. The reported values are rounded to four decimal places and multiplied by $1{,}000$.}
    \label{tab:crit}
    \begin{tabular}{ccccc}
    \toprule
    Spectral gap $(\underline{\lambda}_1)$ & \hspace{1.5cm} & $1{,}000 \cdot \kappa_{0.1}^{\textrm{2nd}}$ & \hspace{1.5cm} & $1{,}000 \cdot \kappa_{0.1}$ \\
    \midrule
    0.15 & & 1.9916 & & 1.1798 \\
    0.20 & & 1.9772 & & 1.9091 \\
    0.25 & & 1.9161 & & 0.7342 \\
    0.30 & & 1.8465 & & 0.6152 \\
    \bottomrule
    \end{tabular}
\end{table}

\begin{figure}[t] \label{fig:varyingN}
    \centering
    \includegraphics[width=\linewidth]{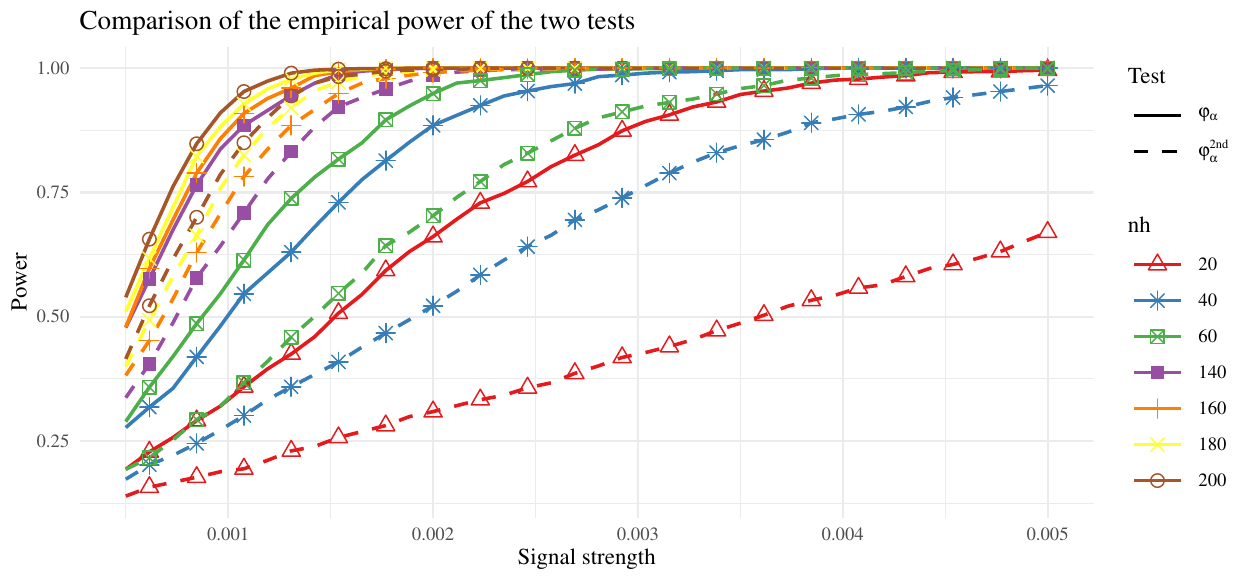}
    \caption{Rejection frequencies of the second moment-based test \eqref{eq:testsec} and covariance-based test \eqref{eq:Test} under $\mathcal{H}_1$ at significance level $\alpha = 0.1$, for block length $h = 0.02$ and varying sample sizes $n$, based on Example~\ref{ex:3.10MLLinDrift} with $1{,}000$ Monte Carlo repetitions.}
\end{figure}

\indent To justify the choice of $1{,}000$ observations in Figure~\ref{fig:LinDriftEx320_beta05}, we conduct an additional power analysis comparing the second moment-based test (\ref{eq:testsec}, dashed lines) and covariance-based test (\ref{eq:Test}, solid lines) at significance level $\alpha = 0.1$. Figure~\ref{fig:varyingN} reports the rejection frequencies under a rank-two alternative for seven different local sample sizes ranging from $20$ to $200$ observations. The number of blocks over the entire interval is fixed at $50$, corresponding to $h=0.02$, and hence are the varying sample sizes given by $n \in\{1{,}000; 2{,}000; 3{,}000; 7{,}000; 8{,}000; 9{,}000; 10{,}000 \}$. The spectral gap is set to $\underline{\lambda}_1 = 0.5$, and we assume $\beta_{\Sigma} = 0.5$, in line with Figure~\ref{fig:LinDriftEx320_beta05}. All results are based on $1{,}000$ Monte Carlo replications for the simulated Example~\ref{ex:3.10MLLinDrift}. The covariance-based test \eqref{eq:Test} exhibits a faster increase in power compared to the second moment-based test \eqref{eq:testsec}. Even as the local sample size increases, the second moment-based test \eqref{eq:testsec} attains comparable but consistently lower power, highlighting the uniformly higher power of the covariance-based test \eqref{eq:Test}.

\indent To quantify the magnitude of a deterministic drift relative to the diffusion component, we consider the following example.

\begin{example}\label{ex:3.10MLVaryingLinDrift}
    We reconsider the extended two-dimensional It\^o semi-martingale introduced in Example~\ref{ex:3.10MLLinDrift} by choosing a varying linear drift component 
    $$
        \tilde{b}(t) := a \begin{pmatrix}
            -2 t \\
        -10 t
        \end{pmatrix}, \, a \in [0,10].
    $$
\end{example}

\begin{figure}[t]
    \centering
    \includegraphics[width=\linewidth]{DriftRatio.pdf}
    \caption{Power of the two rank tests as a function of signal strength and drift-diffusion ratio $\rho$ in Example~\ref{ex:3.10MLVaryingLinDrift} with $\underline{\lambda}_1 = 0.5$ and $\beta_{\Sigma} = 0.5$. \emph{Left:} Rank test at level $\alpha = 0.1$ based on the second moment estimator defined in \eqref{eq:testsec}. \emph{Right:} Rank test at level $\alpha = 0.1$ based on the covariance estimator defined in \eqref{eq:Test}.}
    \label{fig:DifferentSizeLinDrift}
\end{figure}

\begin{table}[h!] \label{tab:critvardrift}
    \centering
    \caption{Representative subset of critical values for $\kappa_{0.1}^{\textrm{2nd}}$ and $\kappa_{0.1}$, computed for Example~\ref{ex:3.10MLVaryingLinDrift} with $\underline{\lambda}_1 = 0.5$ and $\beta_{\Sigma} = 0.5$. The reported values are rounded to five decimal places and multiplied by $1{,}000$.}
    \begin{tabular}{ccccc}
    \toprule
    Drift-diffusion ratio ($\rho$) & \hspace{1.5cm} & $1{,}000 \cdot \kappa_{0.1}^{\textrm{2nd}}$ & \hspace{1.5cm} &$1{,}000 \cdot \kappa_{0.1}$ \\
    \midrule
    0.00 & & 0.39092 & & 0.37178 \\
    0.05 & & 0.42186 & & 0.37178 \\
    0.10 & & 0.51556 & & 0.37180 \\
    0.15 & & 0.70025 & & 0.37181 \\
    0.20 & & 0.92107 & & 0.37182 \\
    0.25 & & 1.20078 & & 0.37184\\
    0.30 & & 1.53330 & & 0.37186 \\
    \bottomrule
    \end{tabular}
\end{table}

We set $\underline{\lambda}_{1} = 0.5$ and choose $\beta_{\Sigma} = 0.5$. The sample size is $n = 1{,}000$ and the block-length equals $h = 0.02$, yielding $nh = 20$ observations per block. All results are based on $1{,}000$ Monte Carlo replications. 

\indent To assess the relative importance of the deterministic drift compared to the diffusion, we introduce the drift-diffusion ratio. Specifically, we compute the average Euclidean norm of the drift and scale it by the instantaneous volatility level, measured by the square root of the trace of the spot covariance matrix. This normalization ensures that the ratio reflects only the relative strength of the drift and diffusion components and remains comparable across time. Concretely, we compute 
\begin{align}\label{eq:rho}
    \rho = \bigg(\sum_{i = 1}^{50n} \big\Vert \tilde{b}(t_i) \big\Vert \bigg)\bigg (\sum_{i=1}^{50n} (\tr(\Sigma(t_i)))^{1/2}\bigg)^{-1} 
\end{align}
with $t_i = i/(50n)$ for all $i \in \ldbrack 1, 50n \rdbrack$.  

\indent Figure~\ref{fig:DifferentSizeLinDrift} depicts the power as a function of the drift-diffusion ratio $\rho$ (horizontal axis) and the signal strength (vertical axis). 

\indent For small values of $\rho$, i.e.\ $\rho < 0.05$, both procedures exhibit comparable detection boundaries. The second moment-based test \eqref{eq:testsec} requires uniformly larger critical values, whereas the critical values of the covariance-based test \eqref{eq:Test} remain approximately constant (see Table~\ref{tab:critvardrift}). As the drift-diffusion ratio increases beyond $\rho = 0.05$, the second moment-based test \eqref{eq:testsec} requires substantially stronger signal strengths to achieve the same power level. In contrast, the covariance-based procedure remains stable across the entire range of $\rho$, which is consistent with its approximately constant critical values.

\subsection*{Acknowledgements}
I want to thank Markus Rei\ss\ for his helpful advice and his support of my work.

\subsection*{Funding}
    This research has been partially funded by Deutsche Forschungsgemeinschaft (DFG) - Project-ID 410208580 - IRTG2544 (``Stochastic Analysis in Interaction''). 

\begin{appendices}\label{app:proofs}

\section{Proofs of Subsection~\ref{sec:mainh0}} \label{a:adapted}
To prove the statements under the null hypothesis, we assume that $b$ is an adapted drift process satisfying $\EWt{\int_{0}^1 \Vert b(t)\Vert \inte t} < \infty$ and Assumption~\ref{ass:hölderad}. Therefore, we state some useful Lemmas. 

\begin{lemma}\label{lem:SumBiBiCond}
    Let $k \in \ldbrack 0, h^{-1}-1 \rdbrack$. Under Assumption~\ref{ass:hölderad}, we have 
    $$\mathbb{E}\bigg[\bigg\Vert\sum_{i=1}^{nh} \Big(B_{ik} - B_{ik}^{\mathrm{cond}}\Big)\bigg\Vert^{2}\bigg] \leq L_b^2 n^{-1} h^{2\beta_b+1}.$$
\end{lemma}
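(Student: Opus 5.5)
The plan is to write $B_{ik}-B^{\mathrm{cond}}_{ik}=\int_{I_{ik}}\big(b(t)-\EWcondt{b(t)}{\mathcal{F}_{i-1,k}}\big)\inte t=:D_{ik}$ and to exploit two facts: the $D_{ik}$ form a martingale difference sequence, and each $D_{ik}$ is small because of the Hölder condition \eqref{eq:HölderAd}. The argument has three steps.

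\emph{Step 1: the cross terms vanish.} $D_{ik}$ is $\mathcal{F}_{i,k}$-measurable, since $b$ is adapted and $I_{ik}$ ends at $kh+i/n$. Moreover $\EWcondt{D_{ik}}{\mathcal{F}_{i-1,k}}=0$ by the tower property, using Fubini with $\EWt{\int\Vert b\Vert}<\infty$ to interchange integral and conditional expectation. Hence for $i<j$ we have $\EWt{D_{ik}^\top D_{jk}}=\EWt{D_{ik}^\top \EWcondt{D_{jk}}{\mathcal{F}_{j-1,k}}}=0$, so that
$$\mathbb{E}\bigg[\bigg\Vert\sum_{i=1}^{nh}D_{ik}\bigg\Vert^2\bigg]=\sum_{i=1}^{nh}\EWt{\Vert D_{ik}\Vert^2}.$$
For these products to be integrable I would use the finite fourth moments implied by \eqref{eq:HölderAd}, or else argue via truncation.

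\emph{Step 2: bound each summand.} Let $s:=kh+(i-1)/n$. Since $b(s)$ is $\mathcal{F}_{s}$-measurable, $b(t)-\EWcondt{b(t)}{\mathcal{F}_s}=(b(t)-b(s))-\EWcondt{b(t)-b(s)}{\mathcal{F}_s}$. Conditional expectation is an $\Lp{2}$-contraction, and the centred quantity is an orthogonal projection residual, so
$$\EWt{\Vert b(t)-\EWcondt{b(t)}{\mathcal{F}_s}\Vert^2}\le \EWt{\Vert b(t)-b(s)\Vert^2}\le \EWt{\Vert b(t)-b(s)\Vert^4}^{1/2}\le L_b^2|t-s|^{2\beta_b}.$$
Applying Cauchy--Schwarz to the time integral over $I_{ik}$, which has length $n^{-1}$, gives
$$\EWt{\Vert D_{ik}\Vert^2}\le n^{-1}\int_{I_{ik}}L_b^2(t-s)^{2\beta_b}\inte t\le L_b^2 n^{-2}n^{-2\beta_b}.$$

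\emph{Step 3: sum and compare with the claimed rate.} Summing over the $nh$ indices yields $L_b^2 h\, n^{-1-2\beta_b}$. Assumption~\ref{ass:nh} gives $n^{-1}\lesssim h$, so $n^{-2\beta_b}\lesssim h^{2\beta_b}$, and the sum is at most of order $L_b^2n^{-1}h^{2\beta_b+1}$. If the constant must be exactly $1$, as the statement suggests, Assumption~\ref{ass:nh} does not suffice. In that case I would instead compare with $b(kh)$, the block start, and use $|t-kh|\le h$; this gives $\EWt{\Vert D_{ik}\Vert^2}\le n^{-2}L_b^2h^{2\beta_b}$ directly, since $b(kh)$ is $\mathcal{F}_{s}$-measurable for $s\ge kh$. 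Summing then yields exactly $L_b^2n^{-1}h^{2\beta_b+1}$ without any assumption. That is the route I would take.

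The main obstacle is the rigorous justification of the orthogonality in Step 1: establishing measurability and integrability so that the conditional expectation commutes with the Bochner integral. The Hölder bounds in Steps 2 and 3 are routine.
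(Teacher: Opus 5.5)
Your final route is correct and is essentially the paper's proof. It uses orthogonality of the martingale differences $D_{ik}$ to remove the cross terms, replaces the conditional mean by the $\mathcal{F}_{i-1,k}$-measurable $b(kh)$ via the $\Lp{2}$-projection property, and finishes with Jensen in time and $|t-kh|\le h$; the paper phrases the middle step as $\EWt{\VariCond{\int_{I_{ik}} b(s)\inte s}{\mathcal{F}_{i-1,k}}}\le \Vari{\int_{I_{ik}}(b(s)-b(kh))\inte s}$. As a minor aside, your sharper first route also gives constant $1$ (in fact $(2\beta_b+1)^{-1}$) without Assumption~\ref{ass:nh}, because the standing convention $nh\in\mathbb{N}$ forces $n^{-1}\le h$ exactly.
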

\begin{proof}
    Throughout the proof, for $X=(X_1,\ldots,X_n)^\top \in \Lp{2}(\Omega; \mathbb{R}^{n})$ defined on the probability space $(\Omega, \mathcal{F}, \mathbb{P})$, we use the notation 
    $$
        \Vari{X} := \EWt{\Vert X - \EWt{X} \Vert^2} = \sum_{i=1}^{n} \EWt{(X_i - \EW{X_i})^2} = \sum_{i=1}^{n} \Vari{X_i}
    $$
    (see for instance \cite[Remark A.28]{trabs2021statistik}). Furthermore, for a sub-$\sigma$-algebra $\mathcal G \subseteq \mathcal F$, we write
    $$
        \VariCond{X}{\mathcal{G}} := \EWcond{\Vert X - \EWcond{X}{\mathcal{G}}\Vert^2} {\mathcal{G}} = \sum_{i=1}^n \EWcond{(X_i - \EWcond{X_i}{\mathcal{G}})^2}{\mathcal{G}} = \sum_{i=1}^{n}\VariCond{X_i}{\mathcal{G}}.
    $$
    From now on, let $k \in \ldbrack 0, h^{-1}-1\rdbrack$. By the tower property of conditional expectation, the inner products corresponding to different indices are zero, so that only the squared terms remain. Therefore, we have, by applying Fubini's theorem and the tower property,  
    \begin{align*}
        \mathbb{E}\bigg[\bigg\Vert\sum_{i=1}^{nh} \Big(B_{ik} - B_{ik}^{\mathrm{cond}}\Big)\bigg\Vert^{2}\bigg] 
        &= \sum_{i=1}^{nh} \mathbb{E}\bigg[\left\Vert \int_{I_{ik}} \left(b(s) - \EWcond{b(s)}{\mathcal{F}_{i-1,k}}\right) \inte s \right\Vert^2\bigg] \\
        &= \sum_{i=1}^{nh} \EW{\VariCond{\int_{I_{ik}} b(s) \inte s}{\mathcal{F}_{i-1,k}}} \\
        &\leq \sum_{i=1}^{nh} \Vari{\int_{I_{ik}} (b(s) - b(kh)) \inte s} \\
        &\leq \sum_{i=1}^{nh} \mathbb{E}\bigg[\bigg\Vert \int_{I_{ik}} (b(s) - b(kh)) \inte s \bigg\Vert^2\bigg].
     \end{align*}
    Jensen's inequality and Fubini's theorem yield
    \begin{align*}
        \mathbb{E}\bigg[\bigg\Vert\sum_{i=1}^{nh} \Big(B_{ik} - B_{ik}^{\mathrm{cond}}\Big)\bigg\Vert^{2}\bigg] 
        \leq n^{-1} \sum_{i=1}^{nh} \int_{I_{ik}}  \EW{\left\Vert b(s) - b(kh) \right\Vert^2} \inte s \leq L_b^{2} n^{-1} h^{2 \beta_b + 1},
    \end{align*}
    where Assumption~\ref{ass:hölderad} has been used in the last step. 
\end{proof}

\begin{lemma} \label{lem:BcondAvBcondsquared}
    Let $i \in \ldbrack 1, nh \rdbrack$, $k \in \ldbrack 0, h^{-1}-1 \rdbrack$. Then we have 
    $$
        \mathbb{E}\bigg[\Big\Vert B_{ik}^{\mathrm{cond}} - \overline{ B_{k}^{\mathrm{cond}}}\Big\Vert^2\bigg] \leq 4 L_b^2 n^{-2} h^{2\beta_b}. 
    $$
\end{lemma}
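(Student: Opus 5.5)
Since $B_{ik}^{\mathrm{cond}} - \overline{B_k^{\mathrm{cond}}} = (nh)^{-1}\sum_{j=1}^{nh}\big(B_{ik}^{\mathrm{cond}} - B_{jk}^{\mathrm{cond}}\big)$, convexity of $\Vert\cdot\Vert^2$ (Jensen for the uniform average over $j$) gives
\begin{align*}
    \mathbb{E}\Big[\big\Vert B_{ik}^{\mathrm{cond}} - \overline{B_k^{\mathrm{cond}}}\big\Vert^2\Big] \leq (nh)^{-1}\sum_{j=1}^{nh}\mathbb{E}\Big[\big\Vert B_{ik}^{\mathrm{cond}} - B_{jk}^{\mathrm{cond}}\big\Vert^2\Big].
\end{align*}
It therefore suffices to bound each pairwise difference by $4L_b^2n^{-2}h^{2\beta_b}$.

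To do this, we introduce the block-start value $b(kh)$ as a common reference point. Since $|I_{ik}| = n^{-1}$, we can write
$$
    B_{ik}^{\mathrm{cond}} - n^{-1}b(kh) = \int_{I_{ik}}\mathbb{E}\big[b(t)-b(kh)\,\big|\,\mathcal{F}_{i-1,k}\big]\,\mathrm{d}t.
$$
This uses that $b(kh)$ is $\mathcal{F}_{kh}\subseteq\mathcal{F}_{i-1,k}$-measurable, by adaptedness of $b$. The elementary inequality $\Vert x-y\Vert^2\le 2\Vert x\Vert^2+2\Vert y\Vert^2$ then splits the pairwise difference into two such terms, one for index $i$ and one for index $j$.

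Each term is bounded in three steps:
\begin{enumerate}
    \item Apply Jensen's inequality to the time integral: $\Vert\int_{I_{ik}}f\,\mathrm{d}t\Vert^2\le n^{-1}\int_{I_{ik}}\Vert f\Vert^2\,\mathrm{d}t$.
    \item Apply conditional Jensen and the tower property: $\mathbb{E}\Vert\mathbb{E}[Y\mid\mathcal{F}_{i-1,k}]\Vert^2\le\mathbb{E}\Vert Y\Vert^2$.
    \item Use Assumption~\ref{ass:hölderad}, where the $L^2$-norm is dominated by the $L^4$-norm, together with $|t-kh|\le h$ for $t\in I_k$, to get $\mathbb{E}\Vert b(t)-b(kh)\Vert^2\le L_b^2h^{2\beta_b}$.
\end{enumerate}
Combined, these give $\mathbb{E}\Vert B_{ik}^{\mathrm{cond}}-n^{-1}b(kh)\Vert^2\le n^{-2}L_b^2h^{2\beta_b}$. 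Hence each pairwise difference is at most $2\cdot n^{-2}L_b^2h^{2\beta_b}+2\cdot n^{-2}L_b^2h^{2\beta_b} = 4L_b^2n^{-2}h^{2\beta_b}$, and averaging over $j$ preserves this bound, which is the claim.

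The only real subtlety is the choice of reference point. We use the deterministic-in-time anchor $b(kh)$ rather than comparing $B_{ik}^{\mathrm{cond}}$ and $B_{jk}^{\mathrm{cond}}$ directly, because the two are conditioned on different $\sigma$-algebras and cannot be differenced under a single conditional expectation. Measurability of $b(kh)$ with respect to every $\mathcal{F}_{i-1,k}$ with $i\ge1$ is what makes the anchoring step valid.
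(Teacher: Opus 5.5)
Your proof is correct and takes essentially the same route as the paper. You anchor at $n^{-1}b(kh)$, which cancels in the difference; the paper applies Lemma~\ref{lem:NormDiffXiMean} to $X_i=\mathbb{E}[\int_{I_{ik}}(b(s)-b(kh))\inte s\mid\mathcal{F}_{i-1,k}]$, while you use Jensen over the average followed by $\Vert x-y\Vert^2\le 2\Vert x\Vert^2+2\Vert y\Vert^2$, and both orderings give the same intermediate bound $2\mathbb{E}\Vert X_i\Vert^2+2(nh)^{-1}\sum_j\mathbb{E}\Vert X_j\Vert^2$, after which the same conditional Jensen, Jensen-in-time and $L^4$-H\"older steps yield $n^{-2}L_b^2h^{2\beta_b}$ per term.
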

\begin{proof}
    From Lemma~\ref{lem:NormDiffXiMean} with $X_i = \EWcondt{\int_{I_{ik}} (b(s) - b(kh)) \inte s }{\mathcal{F}_{i-1,k}}$, we get 
    \begin{align*}
        \mathbb{E}\bigg[\Big\Vert B_{ik}^{\mathrm{cond}} - \overline{ B_{k}^{\mathrm{cond}}}\Big\Vert^2\bigg]
        &\leq 2 \mathbb{E}\bigg[\left\Vert \mathbb{E}\bigg[\int_{I_{ik}} \left(b(s) - b(kh)\right)  \inte s \bigg| \mathcal{F}_{i-1,k}\bigg]\right\Vert^2\bigg] \\
        &\phantom{\leq}+ 2(nh)^{-1} \sum_{j=1}^{nh}\mathbb{E}\bigg[\bigg\Vert \mathbb{E}\bigg[\int_{I_{jk}} \left(b(s)-b(kh)\right) \inte s \bigg| \mathcal{F}_{j-1,k}\bigg]\bigg\Vert^2\bigg].
    \intertext{Applying $\Vert \EWcondt{X}{\mathcal{H}} \Vert^2 \leq \EWcondt{\Vert X \Vert^2}{\mathcal{H}}$ (cf.\ {\cite[Corollary B.5]{OksendalSDE}}) and Jensen's inequality yields}
        \EW{\Big\Vert B_{ik}^{\mathrm{cond}} - \overline{ B_{k}^{\mathrm{cond}}}\Big\Vert^2}
        &\leq 2(n^2h)^{-1} \sum_{j=1}^{n h} \bigg(\EW{ \mathbb{E}\bigg[\int_{I_{ik}} \left\Vert b(s) - b(kh)\right\Vert^2\inte s \bigg| \mathcal{F}_{i-1,k}}\bigg]  \\
        &\qquad\qquad\qquad\qquad\qquad\phantom{\leq}+ \mathbb{E}\bigg[ \mathbb{E}\bigg[\int_{I_{jk}} \left\Vert b(s) - b(kh)\right\Vert^2\inte s \bigg| \mathcal{F}_{j-1,k}\bigg]\bigg]\bigg) \\
        &\leq 4 L_b^2 n^{-2} h^{2\beta_b},
    \end{align*}
    where Fubini's theorem, the tower property and the H\"older-regularity condition (compare Assumption~\ref{ass:hölderad}) have been used in the last step. 
\end{proof}

\subsection{Proof of Proposition~\ref{prop:biash0ad}}\label{app:biash0ad}
The proof proceeds by first decomposing $h \Vert \EWt{\hat{\Sigma}_{>r}^{kh}} \Vert$ into three terms, which are analysed separately. One of these terms is further decomposed into three subterms to facilitate the derivation of an appropriate upper bound. To this end, we exploit the H\"older-regularity of the drift term $b$, Jensen’s inequality, and Fubini’s theorem, as well as standard properties of centred multivariate Gaussian increments and the trace. Finally, the upper bounds obtained for the individual terms are combined to derive the desired estimate. \\ 
\indent Throughout the proof, let $i \in \ldbrack 1, nh \rdbrack$ and $k \in \ldbrack 0, h^{-1}-1 \rdbrack$. We start by decomposing $\hat{\Sigma}^{kh}$ as 
    \begin{align*}
        \hat{\Sigma}^{kh}
        &= h^{-1} \bigg[\sum_{i=1}^{nh}\left(\Delta \check{X}_{ik} - \overline{\Delta \check{X}_{k}}\right)^{\otimes 2} + \sum_{i=1}^{nh}\left(B_{ik}^{\mathrm{cond}} - \overline{B_{k}^{\mathrm{cond}}}\right)^{\otimes 2} \\
        &\phantom{=}\qquad\qquad\qquad+ \sum_{i=1}^{nh}\left(\Delta \check{X}_{ik} -  \overline{\Delta\check{X}_{k}}\right)\odot\left(B_{ik}^{\mathrm{cond}} - \overline{B_{k}^{\mathrm{cond}}}\right)\bigg],
    \end{align*}
    and hence, by applying the orthogonal projection $\Proj{P}_{>r}$ onto the space $V_{>r}$ generated by the $d-r$ smallest eigenvalues of $\Sigma_{\avg}^{kh}$ to this decomposition, we obtain 
    \begin{align}
         h \big\Vert\EWest{\hat{\Sigma}_{>r}^{kh}}\big\Vert 
         &\leq \left\Vert \EW{\mathcal{T}_{1, >r}} \right\Vert + \left\Vert \EW{\mathcal{T}_{2,>r}} \right\Vert + 2\left\Vert \EW{\mathcal{T}_{3,>r}}\right\Vert, \label{eq:biasdecddproof}
    \end{align}
    where 
    \begin{align*}
        \mathcal{T}_{1, >r} &:= \Projrgg{\sum_{i=1}^{nh}\left(\Delta \check{X}_{ik} - \overline{\Delta \check{X}_{k}}\right)^{\otimes 2}}, \\
        \mathcal{T}_{2,>r} &:= \Projrgg{\sum_{i=1}^{nh}\left(B_{ik}^{\mathrm{cond}} - \overline{B_{k}^{\mathrm{cond}}}\right)^{\otimes 2}}, \\
        \mathcal{T}_{3,>r} &:= \Projrgg{\sum_{i=1}^{nh} \left(B_{ik}^{\mathrm{cond}} - \overline{B_{k}^{\mathrm{cond}}}\right)\left(\Delta \check{X}_{ik} - \overline{\Delta \check{X}_{k}}\right)^\top}.
    \end{align*}
    We first verify the upper bounds stated in \eqref{eq:EWT1Proj}--\eqref{eq:EWT3Proj}. 
    
\noindent \underline{Proof of \eqref{eq:EWT1Proj}.} Using the triangle inequality, we first obtain 
    \begin{align}
        &\bigg\Vert \mathbb{E}\bigg[\Projrgg{\sum_{i=1}^{nh} \left(\Delta \check{X}_{ik} - \overline{\Delta \check{X}_k}\right)^{\otimes 2}}\bigg] \bigg\Vert \leq \left\Vert \EW{\mathcal{T}_{1.1,>r}} \right\Vert + \left\Vert \EW{\mathcal{T}_{1.2,>r}} \right\Vert + 2\left\Vert \EW{\mathcal{T}_{1.3,>r}} \right\Vert, \label{ineq:h0ad1}
    \end{align}
    where 
    \begin{align*}
        \mathcal{T}_{1.1,>r} &:= \Projrgg{\sum_{i=1}^{nh}\left(\Delta \tilde{X}_{ik} - \overline{\Delta \tilde{X}_{k}}\right)^{\otimes 2}},\\
        \mathcal{T}_{1.2, >r} &:= \Projrgg{\sum_{i=1}^{nh} \left(B_{ik} - B_{ik}^{\mathrm{cond}} - \left(\overline{B_k} - \overline{B_{k}^{\mathrm{cond}}}\right)\right)^{\otimes 2}}, \\
        \mathcal{T}_{1.3,>r} &:= \Projrgg{\sum_{i=1}^{nh} \left(\Delta \tilde{X}_{ik} - \overline{\Delta \tilde{X}_k}\right) \left(B_{ik} - B_{ik}^{\mathrm{cond}} - \left(\overline{B_k} - \overline{B_{k}^{\mathrm{cond}}}\right)\right)^{\top}}.
    \end{align*} 
    In the following, we will consider the terms separately. 
    
    \indent We first compute $\Vert \EWt{\mathcal{T}_{1.1, >r}}\Vert$. Since the increments $\Delta \tilde{X}_{ik}$ are independent, it follows that $\EWt{h^{-1} \sum_{i=1}^{nh}(\Delta \tilde{X}_{ik} - \overline{\Delta \tilde{X}_{k}})^{\otimes 2}} = \EWt{h^{-1} \sum_{i=1}^{nh} (\Delta \tilde{X}_{ik})^{\otimes 2} - n (\overline{\Delta \tilde{X}_{k}})^{\otimes 2}} = (1-(nh)^{-1}) \Sigma_{\avg}^{kh}$. Because the projection $\Proj{P}_{>r}$ is deterministic we obtain
    \begin{align}
        \left\Vert \EW{\mathcal{T}_{1.1, >r}} \right\Vert = h\big(1- (nh)^{-1}\big)\big\Vert \Sigma_{\avg, >r}^{kh} \big\Vert. \label{ineq:h0ad2}
    \end{align}
    \indent For $\Vert\EWt{\mathcal{T}_{1.2, >r}}\Vert$, we have $B_{ik} - \overline{B_{k}} = \int_{I_{ik}} b(s) \inte s - (nh)^{-1} \int_{I_{k}} b(t) \inte t = \int_{I_{ik}} (b(s) - h^{-1} \int_{I_{k}} b(t) \inte t) \inte s$ $= h^{-1} \int_{I_{k}} (\int_{I_{ik}} (b(s) - b(t)) \inte s) \inte t$. Applying Jensen's inequality, Fubini's theorem and the H\"older-regularity condition (compare Assumption~\ref{ass:hölderad}) yields 
    \begin{align} \label{ineq:BbarB}
        \EW{\left\Vert B_{ik} - \overline{B_{k}} \right\Vert^2} \leq (nh)^{-1} \int_{I_k}\left(\int_{I_{ik}} \EW{\Vert b(s) - b(t) \Vert^2} \inte s \right) \inte t \leq L_b^2 n^{-2} h^{2\beta_b}.
    \end{align} Combining this with $\Vert \Proj{P}_{>r} \Vert \leq 1$, Lemma~\ref{lem:BcondAvBcondsquared} and the H\"older-regularity condition, it follows that 
    \begin{align}
        \left\Vert\EWt{\mathcal{T}_{1.2, >r}}\right\Vert &\leq\bigg\Vert \mathbb{E}\bigg[\sum_{i=1}^{nh} \left(B_{ik} - B_{ik}^{\mathrm{cond}} - \left(\overline{B_k} - \overline{B_{k}^{\mathrm{cond}}}\right)\right)^{\otimes 2}\bigg] \bigg\Vert \notag \\
        &\leq 2\sum_{i=1}^{nh} \mathbb{E}\bigg[\left\Vert B_{ik} - \overline{B_k} \right\Vert^2 + \left\Vert B_{ik}^{\mathrm{cond}} - \overline{B_{k}^{\mathrm{cond}}} \right\Vert^2\bigg] \notag \\
        &\leq 10 L_b^2 n^{-1} h^{2 \beta_b + 1}. \label{ineq:h0ad3}
    \end{align}
    
    \indent For the third term we get the following, using the Cauchy--Schwarz inequality, 
    \begin{align}
        \left\Vert\EW{\mathcal{T}_{1.3,>r}}\right\Vert
        &\leq \sum_{i=1}^{nh} \bigg( \bigg(\mathbb{E}\bigg[\bigg\Vert\Projrgg{\Delta \tilde{X}_{ik} - \overline{\Delta \tilde{X}_k}}\bigg\Vert^2\bigg]\bigg)^{1/2}  \notag \\
        &\phantom{\leq}\ \qquad\qquad\qquad\qquad\qquad\cdot \left(\EW{\left\Vert B_{ik} - B_{ik}^{\mathrm{cond}} - \left(\overline{B_k} - \overline{B_k^{\mathrm{cond}}}\right)\right\Vert^2}\right)^{1/2}\bigg) \notag \\
        &\leq \sum_{i=1}^{nh} \bigg(\bigg(2\mathbb{E}\bigg[\bigg\Vert\Projrgg{\Delta \tilde{X}_{ik}}\bigg\Vert^2 + \left\Vert\Projr{\overline{\Delta \tilde{X}_k}}\right\Vert^2\bigg]\bigg)^{1/2} \notag \\
        &\phantom{=}\ \qquad\qquad\qquad\qquad\qquad\thickspace\thickspace \cdot  \left(\EW{\left\Vert B_{ik} - B_{ik}^{\mathrm{cond}} - \left(\overline{B_k} - \overline{B_k^{\mathrm{cond}}}\right)\right\Vert^2}\right)^{1/2}\bigg). \label{ineq:h0ad4}
    \end{align}
    Further, we have $\EWt{\Vert B_{ik} - B_{ik}^{\mathrm{cond}} - (\overline{B_k} - \overline{B_{k}^{\mathrm{cond}}})\Vert^2} \leq 2 \EWt{\Vert B_{ik} - \overline{B_k} \Vert^2 + \Vert B_{ik}^{\mathrm{cond}} - \overline{B_{k}^{\mathrm{cond}}} \Vert^2}$. Combining this with Lemma~\ref{lem:BcondAvBcondsquared}, \eqref{ineq:BbarB}, $\Projrt{\Delta \tilde{X}_{ik}} \sim \mathcal{N}(0,\int_{I_{ik}} \Sigma_{>r}(t) \inte t)$ and \eqref{ineq:h0ad4} gives us
    \begin{align}
        &\left\Vert\EW{\mathcal{T}_{1.3,>r}}\right\Vert \notag \\
        &\qquad\leq\sqrt{20} L_b n^{-1}h^{\beta_b}\sum_{i=1}^{nh} \bigg(\bigg(\tr\bigg(\int_{I_{ik}} \Sigma_{>r}(t) \inte t \bigg)\bigg)^{1/2} + (nh)^{-1} \big(\tr_{>r}\big(h\Sigma_{\avg}^{kh}\big)\big)^{1/2}\bigg) \notag \\
        &\qquad\leq \sqrt{20} L_b n^{-1}h^{\beta_b} \big((nh)^{1/2} + 1\big) \big(\tr_{>r}\big(h \Sigma_{\avg}^{kh}\big)\big)^{1/2}. \label{ineq:h0ad5}
    \end{align}
    Plugging the upper bounds \eqref{ineq:h0ad2}, \eqref{ineq:h0ad3} and \eqref{ineq:h0ad5} in \eqref{ineq:h0ad1} and using Assumption~\ref{ass:nh} yields  
    \begin{align*}
        \left\Vert\EW{\mathcal{T}_{1,>r}} \right\Vert
        &\lesssim h\lambda_{r+1}\big(\Sigma_{\avg}^{kh}\big) + L_b^2 n^{-1}h^{2\beta_b+1} + L_bn^{-1/2}h^{\beta_b + 1}\big(\tr_{>r}\big(\Sigma_{\avg}^{kh}\big)\big)^{1/2}.
    \end{align*}
    
\noindent \underline{Proof of \eqref{eq:EWT2Proj}.} 
    For the orthogonal projection $\Proj{P}_{>r}$, we have $\Vert \Proj{P}_{>r} \Vert \leq 1$. This yields
    \begin{align*}
        \left\Vert\EW{\mathcal{T}_{2,>r}} \right\Vert &\leq \mathbb{E}\bigg[\bigg\Vert\sum_{i=1}^{nh} \Big( B_{ik}^{\mathrm{cond}} - \overline{B_k^{\mathrm{cond}}}\Big)^{\otimes 2}\bigg\Vert\bigg] \leq \sum_{i=1}^{nh} \EW{\left\Vert B_{ik}^{\mathrm{cond}} - \overline{B_k^{\mathrm{cond}}}\right\Vert^2} \leq 4 L_b^2 n^{-1} h^{2 \beta_b + 1}, 
    \end{align*}
    where Lemma~\ref{lem:BcondAvBcondsquared} has been used in the last step. 
    
\noindent \underline{Proof of \eqref{eq:EWT3Proj}.} 
    By using the fact that the orthogonal projection $\Proj{P}_{>r}$ is deterministic and satisfies $\Vert\Proj{P}_{>r}\Vert \leq 1$, the linearity of the expectation and the Cauchy--Schwarz inequality, we have
    \begin{align}
        &\bigg\Vert \Projrgg{\mathbb{E}\bigg[\sum_{i=1}^{nh} \left(B_{ik}^{\mathrm{cond}} - \overline{B_{k}^{\mathrm{cond}}}\right)\left(\Delta \check{X}_{ik} - \overline{\Delta \check{X}_{k}}\right)^\top\bigg]}\bigg\Vert \notag \\
        &\qquad\leq \sum_{i=1}^{nh} \bigg(\bigg(\EW{\left\Vert B_{ik}^{\mathrm{cond}} - \overline{B_{k}^{\mathrm{cond}}}\right\Vert^2}\bigg)^{1/2} \bigg(\mathbb{E}\bigg[\left\Vert\Projr{\Delta \check{X}_{ik} -  \overline{\Delta\check{X}_{k}}}\right\Vert^2\bigg]\bigg)^{1/2}\bigg). \label{ineq:h0ad6}
    \intertext{Lemma~\ref{lem:BcondAvBcondsquared} now gives an upper bound for the first factor. Therefore, we will investigate the second factor. For any $i \in \ldbrack 1, nh \rdbrack$, we get }
        &\mathbb{E}\bigg[\left\Vert\Projr{\Delta \check{X}_{ik} -  \overline{\Delta\check{X}_{k}}}\right\Vert^2\bigg] \notag \\
        &\qquad\leq 3 \mathbb{E}\bigg[\left\Vert \Projr{\Delta \tilde{X}_{ik} - \overline{\Delta \tilde{X}_k}}\right\Vert^2 + \left\Vert B_{ik} - \overline{B_{k}} \right\Vert^2 + \left \Vert B_{ik}^{\mathrm{cond}} - \overline{B_k^{\mathrm{cond}}} \right \Vert^2\bigg]. \label{ineq:h0ad7}
    \end{align}
    Since $ \Delta \tilde{X}_{ik} - \overline{\Delta \tilde{X}_{k}} \sim \mathcal{N}(0, (1 - 2 (nh)^{-1})\int_{I_{ik}} \Sigma(t) \inte t + (nh)^{-2} \int_{I_{k}} \Sigma(t) \inte t )$, it follows that $\EWt{\Vert \Delta \tilde{X}_{ik} - \overline{\Delta \tilde{X}_{k}} \Vert^2} \leq \tr((1 - 2 (nh)^{-1})\int_{I_{ik}} \Sigma(t) \inte t + (nh)^{-2} \int_{I_{k}} \Sigma(t) \inte t)$. Furthermore, the projection $\Proj{P}_{>r}$ is deterministic. Combining these with \eqref{ineq:h0ad7}, it follows that 
    \begin{align}
        &\mathbb{E}\bigg[\left\Vert\Projr{\Delta \check{X}_{ik} -  \overline{\Delta\check{X}_{k}}}\right\Vert^2\bigg] \notag \\
        &\qquad\leq 3 \big(1-2(nh)^{-1}\big) \tr\left(\int_{I_{ik}} \Sigma_{>r}(t) \inte t\right) + 3 (nh)^{-2} \tr\left(\int_{I_{k}} \Sigma_{>r}(t) \inte t\right) \notag\\
        &\qquad\phantom{=}+ 3 \EW{\left\Vert B_{ik} - \overline{B_{k}} \right\Vert^2 + \left\Vert B_{ik}^{\mathrm{cond}} - \overline{B_k^{\mathrm{cond}}} \right\Vert^2}. \label{eq:h0hilfseq}
    \end{align}
    Using \eqref{ineq:BbarB} and Lemma~\ref{lem:BcondAvBcondsquared} we can rewrite \eqref{eq:h0hilfseq} as 
    \begin{align}
        &\mathbb{E}\bigg[\left\Vert\Projr{\Delta \check{X}_{ik} -  \overline{\Delta\check{X}_{k}}}\right\Vert^2\bigg] \notag \\
        &\qquad\leq 3 \big(1-2(nh)^{-1}\big)\tr\left(\int_{I_{ik}} \Sigma_{>r}(t) \inte t\right) + 3(nh)^{-2} \tr\left(\int_{I_{k}} \Sigma_{>r}(t) \inte t \right) + 15 L_b^2 n^{-2} h^{2 \beta_b}.\label{ineq:h0ad8}
    \intertext{Combining Lemma~\ref{lem:BcondAvBcondsquared}, \eqref{ineq:h0ad6} and \eqref{ineq:h0ad8} we can assert that}
        &\bigg\Vert \Projrgg{\mathbb{E}\bigg[\sum_{i=1}^{nh} \left(B_{ik}^{\mathrm{cond}} - \overline{B_{k}^{\mathrm{cond}}}\right)\left(\Delta \check{X}_{ik} - \overline{\Delta\check{X}_{k}}\right)^\top}\bigg] \bigg\Vert \notag\\
        &\qquad\leq 2 L_b n^{-1} h^{\beta_b} \sum_{i=1}^{nh}\bigg( 3 \big(1-2(nh)^{-1}\big)\tr\bigg(\int_{I_{ik}} \Sigma_{>r}(t) \inte t\bigg) \notag \\
        &\qquad\qquad\qquad\qquad\quad\phantom{=} + 3(nh)^{-2} \tr\bigg(\int_{I_{k}} \Sigma_{>r}(t) \inte t\bigg) + 15 L_b^2 n^{-2} h^{2 \beta_b}\bigg)^{1/2} \notag \\
        &\qquad\leq 2L_b n^{-1} h^{\beta_b} \sum_{i=1}^{nh}\bigg(\bigg(3 \big(1 - 2(nh)^{-1}\big) \tr\bigg(\int_{I_{ik}} \Sigma_{>r}(t) \inte t\bigg)\bigg)^{1/2} \notag\\
        &\qquad\qquad\qquad\qquad\quad\phantom{=}
         + \bigg(3(nh)^{-2} \tr\bigg(\int_{I_{k}} \Sigma_{>r}(t) \inte t\bigg)\bigg)^{1/2} + \big(15 L_b^2 n^{-2} h^{2 \beta_b}\big)^{1/2}\bigg), \notag
    \intertext{where the concavity of the square root function has been used in the last step. Applying the Cauchy--Schwarz inequality gives us}
        &\bigg\Vert \Projrgg{\mathbb{E}\bigg[\sum_{i=1}^{nh} \left(B_{ik}^{\mathrm{cond}} - \overline{B_{k}^{\mathrm{cond}}}\right)\left(\Delta \check{X}_{ik} - \overline{\Delta\check{X}_{k}}\right)^\top}\bigg] \bigg\Vert \notag\\
        &\qquad\leq 2L_b n^{-1} h^{\beta_b} \bigg((nh)^{1/2}\bigg(3\big(1 - 2(nh)^{-1}\big)\tr\bigg(\int_{I_{k}} \Sigma_{>r}(t) \inte t \bigg)\bigg)^{1/2}\notag\\
        &\qquad\qquad\qquad\qquad\qquad\qquad\qquad\quad\phantom{=} + \bigg(3 \tr\bigg(\int_{I_{k}} \Sigma_{>r}(t) \inte t\bigg)\bigg)^{1/2}  + \sqrt{15} L_bh^{\beta_b+1}\bigg) \notag \\
        &\qquad\lesssim L_b n^{-1/2}h^{\beta_b + 1/2} \tr_{>r}\big(h\Sigma_{\avg}^{kh}\big)^{1/2} + L_b^2 n^{-1}h^{2\beta_b+1}, \notag
    \end{align}
    where Assumption~\ref{ass:nh} has been used in the last step. 
    
\noindent \underline{Proof of the upper bound for the bias.} 
    Combining \eqref{eq:biasdecddproof} with \eqref{eq:EWT1Proj}--\eqref{eq:EWT3Proj}, we get 
    \begin{align*}
        \big\Vert\EWest{\hat{\Sigma}_{>r}^{kh}}\big\Vert
        &\lesssim \lambda_{r+1}\big(\Sigma_{\avg}^{kh}\big) + L_b n^{-1/2}h^{\beta_b} \big(\tr_{>r} \big(\Sigma^{kh}_{\avg}\big)\big)^{1/2} + L_b^2 n^{-1} h^{2\beta_b}. \notag
    \end{align*} 
    Taking $\tr_{>r}(\Sigma_{\avg}^{kh}) \leq (d-r)\lambda_{r+1}(\Sigma_{\avg}^{kh})$ yields 
    \begin{align*}
        \sum_{k=0}^{h^{-1}-1} h \big\Vert \EWest{\hat{\Sigma}_{>r}^{kh}}\big\Vert &\lesssim \bigg(\sum_{k=0}^{h^{-1}-1} h \lambda_{r+1}\big(\Sigma_{\avg}^{kh}\big)\bigg) + L_b^2 n^{-1}h^{2\beta_b}.
    \end{align*}
    
\subsection{Proof of Proposition~\ref{prop:stocherrorh0ad}}\label{app:stocherroroh0add}
    We begin by decomposing $\EWt{\Vert \hat{\Sigma}_{>r}^{kh} - \EWt{\hat{\Sigma}_{>r}^{kh}} \Vert^2}$ into six terms, for which appropriate upper bounds are subsequently derived separately. To obtain these upper bounds, we exploit properties of the Hilbert--Schmidt norm, the trace, and centred multivariate Gaussian increments, together with Jensen's inequality, Fubini's theorem, and the H\"older-regularity of the drift process $b$. The proof is completed by combining the upper bounds  obtained for the individual terms. \\
    \indent Applying the orthogonal projection $\Proj{P}_{>r}$ onto the space $V_{>r}$, generated by the $d-r$ smallest eigenvalues of $\Sigma_{\avg}^{kh}$, to the re-centred covariance estimator implies the following decomposition: 
    \begin{align*}
        &\EWest{\big\Vert \hat{\Sigma}_{>r}^{kh} - \EWest{\hat{\Sigma}_{>r}^{kh}} \big\Vert^2} \\
        &\qquad\leq 2\mathbb{E}\bigg[\bigg\Vert h^{-1}\sum_{i=1}^{nh} \Proj{P}_{>r}\bigg(\left(\Delta X_{ik} - n^{-1}b(kh)\right)^{\otimes 2} - \mathbb{E}\bigg[h^{-1}\sum_{j=1}^{nh} \left(\Delta X_{jk} - n^{-1}b(kh)\right)^{\otimes2}\bigg]\bigg)\bigg|_{V_>r}\bigg\Vert^2\bigg]\\
        &\qquad\, \phantom{=}+ 2\mathbb{E}\bigg[\left\Vert n\Projr{\left(\overline{\Delta X_{k}} - n^{-1}b(kh)\right)^{\otimes2} - \EW{n\left(\overline{\Delta X_{k}} - n^{-1}b(kh)\right)^{\otimes2}}}\right\Vert^2\bigg].
    \end{align*}
    We will consider the two terms separately. Using the triangle inequality, we get
    \begin{align*}
        &\mathbb{E}\bigg[\bigg\Vert h^{-1}\sum_{i=1}^{nh} \Proj{P}_{>r}\bigg(\left(\Delta X_{ik} - n^{-1}b(kh)\right)^{\otimes2} - \mathbb{E}\bigg[h^{-1}\sum_{j=1}^{nh} \left(\Delta X_{jk} - n^{-1}b(kh)\right)^{\otimes2}\bigg]\bigg)\bigg|_{V_>r}\bigg\Vert^2\bigg] \\
        &\qquad\leq 3 \left(\EW{\left\Vert \mathcal{T}_{4,>r} \right\Vert^2} + \EW{\left\Vert \mathcal{T}_{5,>r}\right\Vert^2} + 2 \EW{\left\Vert \mathcal{T}_{6, >r}\right\Vert^2}\right),
    \end{align*}
    where $\mathcal{T}_{4,>r}$, $\mathcal{T}_{5,>r}$ and $\mathcal{T}_{6,>r}$ are defined in \eqref{eq:T4Proj}--\eqref{eq:T6Proj}, respectively. 
    
    For the second term, we obtain, by applying the triangle inequality,
    \begin{align*}
        &\mathbb{E}\bigg[\bigg\Vert n\Projr{\left(\overline{\Delta X_{k}} - n^{-1}b(kh)\right)^{\otimes2} - \EW{n\left(\overline{\Delta X_{k}} - n^{-1}b(kh)\right)^{\otimes2}}}\bigg\Vert^2\bigg] \\
        &\qquad\leq 3n^{-2} h^{-4} \left(\EW{\left\Vert \mathcal{T}_{7,>r}\right\Vert^2} + \EW{\left\Vert \mathcal{T}_{8, >r} \right\Vert^2} + 2\EW{\left\Vert \mathcal{T}_{9, >r}\right\Vert^2}\right),
    \end{align*}
    with $\mathcal{T}_{7,>r}$, $\mathcal{T}_{8,>r}$ and $\mathcal{T}_{9,>r}$ defined in \eqref{eq:T7Proj}--\eqref{eq:T9Proj}, respectively. 
    
    \noindent \underline{Proof of \eqref{eq:EWT4Proj}.} We apply $\Vert A \Vert \leq \HS{A}$ for any $A \in \mathbb{R}^{d \times d}$ and use the linearity of the Hilbert--Schmidt norm. Then Lemma~\ref{lem:HSEukl} with $\Projrt{\Delta \tilde{X}_{ik}} \sim \mathcal{N}(0, \int_{I_{ik}}\Sigma_{>r}(t) \inte t)$ yields
    \begin{align*}
        &\mathbb{E}\bigg[\bigg\Vert h^{-1}\sum_{i=1}^{nh} \bigg(\Projro{\big(\Delta \tilde{X}_{ik}\big)^{\otimes 2}} - \EW{\Projro{\big(\Delta \tilde{X}_{ik}\big)^{\otimes2}}}\bigg)\bigg\Vert^2\bigg] \\
        &\qquad\leq h^{-2} \bigg(\mathbb{E}\bigg[\HSgg{\sum_{i=1}^{nh} \bigg(\Projro{\big(\Delta \tilde{X}_{ik}\big)}\bigg)^{\otimes 2}}^2\bigg] - \HSgg{\sum_{i=1}^{nh}\mathbb{E}\bigg[\bigg(\Projro{\big(\Delta \tilde{X}_{ik}\big)}\bigg)^{\otimes2}}\bigg]^2\bigg) \\ 
        &\qquad\leq 2h^{-2} \sum_{i=1}^{nh} \left(\tr\left(\int_{I_{ik}} \Sigma_{>r}(t) \inte t \right)\right)^2 \\
        &\qquad\leq 2 \big(\tr_{>r}\big(\Sigma_{\avg}^{kh}\big)\big)^2, 
    \end{align*}
    where $\sum_{i=1}^n a_i^2 \leq (\sum_{i=1}^n a_i)^2$ for $a_i \geq 0$, $i \in \ldbrack 1, n\rdbrack$, has been used in the last step. 
    
    \noindent \underline{Proof of \eqref{eq:EWT5Proj}.} Because of $\Vert \Proj{P}_{>r} \Vert \leq 1$ and the triangle inequality, we get 
    \begin{align*}
        \EW{\Vert \mathcal{T}_{5, >r} \Vert^2} &\leq h^{-2} \mathbb{E}\bigg[\bigg(\sum_{i=1}^{nh} \bigg\Vert \int_{I_{ik}} \left(b(t) - b(kh)\right) \inte t \bigg\Vert^2\bigg)^2\bigg] \\
        &\leq n^{-2}h^{-1} \int_{I_k} \EW{\left\Vert b(t) - b(kh) \right\Vert^4} \inte t,
    \intertext{where we used Jensen's inequality and Fubini's theorem in the last step. Applying the H\"older-regularity condition (compare Assumption~\ref{ass:hölderad}) gives us  }
        \EW{\Vert \mathcal{T}_{5, >r} \Vert^2} &\leq L_b^4 n^{-2} h^{4 \beta_b}. 
    \end{align*}
    \noindent \underline{Proof of \eqref{eq:EWT6Proj}.} From Corollary~\ref{cor:RVwithNormalMultiplikation} with $V_i = \Projrt{\int_{I_{ik}} \left(b(t) - b(kh)\right) \inte t}$ and $W_i = \Projrt{\Delta \tilde{X}_{ik}} \sim \mathcal{N}(0, \int_{I_{ik}} \Sigma_{>r}(t) \inte t)$ and using $\HSt{A} \leq \tr(A)$ for any $A \in \mathbb{R}^{d \times d}_{\spd}$ as well as $\Vert \Proj{P}_{>r} \Vert \leq 1$, we get 
    \begin{align} 
        &\EW{\left\Vert\mathcal{T}_{6, >r}\right\Vert^2} \notag \\
        &\qquad\leq \sqrt{3}nh^{-1} \sum_{i=1}^{nh} \bigg(\mathbb{E}\bigg[\bigg \Vert \int_{I_{ik}} \left(b(t) - b(kh)\right) \inte t \bigg \Vert^4\bigg] \bigg(\tr\bigg(\int_{I_{ik}} \Sigma_{>r}(t) \inte t\bigg)\bigg)^2\bigg)^{1/2}. \label{eq:T6UB}
    \end{align}
    Applying Jensen's inequality, Fubini's theorem and the H\"older-regularity condition (compare Assumption~\ref{ass:hölderad}), we can bound $\EWt{\Vert \int_{I_{ik}} \left(b(t) - b(kh)\right) \inte t \Vert^4} \leq n^{-3} \EWt{\int_{I_{ik}} \Vert b(t) - b(kh) \Vert^4 \inte t} \leq L_b^4 n^{-4} h^{4 \beta_b}$. Combining this with \eqref{eq:T6UB} we obtain 
        $$\EW{\left\Vert\mathcal{T}_{6, >r}\right\Vert^2} 
        \leq \sqrt{3}L_b^2 n^{-1} h^{2 \beta_b} \tr_{>r}\big( \Sigma_{\avg}^{kh}\big), $$
    where we used the linearity of the trace. 
    
    \noindent \underline{Proof of \eqref{eq:EWT7Proj}.} Analogously to the proof of \eqref{eq:EWT4Proj}, we have 
    $$
        \EW{\Vert \mathcal{T}_{7, >r} \Vert^2} \leq \mathbb{E}\bigg[\HSgg{\left(\Projr{ nh \overline{\Delta \tilde{X}_{k}}}\right)^{\otimes 2}}^2\bigg] - \HSgg{\mathbb{E}\bigg[\bigg(\Projr{ nh \overline{\Delta \tilde{X}_{k}}}\bigg)^{\otimes2}}\bigg]^2
    $$
    and hence, by using Lemma~\ref{lem:HSEukl} with $\Projrt{nh \overline{\Delta \tilde{X}_k}} \sim \mathcal{N}(0,\int_{I_k} \Sigma_{>r}(t) \inte t)$, 
    $$
        \EW{\Vert \mathcal{T}_{7, >r} \Vert^2} \leq 2h^2 \big(\tr_{>r}\big(\Sigma_{\avg}^{kh}\big)\big)^2. 
    $$
    \noindent \underline{Proof of \eqref{eq:EWT8Proj}.} As in the proof of \eqref{eq:EWT5Proj}, we get, by applying Jensen's inequality, Fubini's theorem and the H\"older-regularity condition (compare Assumption~\ref{ass:hölderad}), 
    $$
        \EW{\left\Vert \mathcal{T}_{8, >r} \right\Vert} \leq h^3 \int_{I_k} \EW{\left\Vert b(t) - b(kh) \right\Vert^4} \inte t \leq L_b^4 h^{4(\beta_b +1)}. 
    $$
    \noindent \underline{Proof of \eqref{eq:EWT9Proj}.} We have  
    \begin{align*}
        \EW{\left \Vert \mathcal{T}_{9, >r} \right\Vert^2} &\leq \mathbb{E}\bigg[\bigg\Vert \Projrgg{\bigg(\int_{I_{k}} (b(t) - b(kh)) \inte t \bigg) \left(nh \overline{\Delta \tilde{X}_{k}} \right)^{\top}}\bigg\Vert^2\bigg]. 
    \intertext{By $\Vert \Proj{P}_{>r} \Vert \leq 1$ and the Cauchy--Schwarz inequality, we obtain }
        \mathbb{E}\bigg[\bigg \Vert \mathcal{T}_{9, >r} \bigg\Vert^2\bigg] &\leq \bigg(\mathbb{E}\bigg[\bigg\Vert \int_{I_k} \left(b(t) - b(kh)\right) \inte t \bigg\Vert^4\bigg] \mathbb{E}\bigg[\bigg\Vert \Projr{nh \overline{\Delta \tilde{X}_{k}}} \bigg \Vert^4\bigg] \bigg)^{1/2} \\
        &\leq \sqrt{3} L_b^2 h^{2(\beta_b + 1)} \tr_{>r}\big(h \Sigma^{kh}_{\avg}\big),
    \end{align*}
    where the last step follows from Jensen's inequality, Fubini's theorem, the Hölder regularity condition (cf.\ Assumption~\ref{ass:hölderad}), and the identity $\EWt{\Vert X \Vert^4} = \left(\tr\left(\Sigma\right)\right)^2 + 2\HS{\Sigma}^2$ valid for any 
    $d$-dimensional Gaussian random vector $X \sim \mathcal{N}(0,\Sigma)$ with covariance matrix $\Sigma \in \mathbb{R}^{d \times d}$, together with the bound $\HSt{\Sigma} \leq \tr(\Sigma)$ for all $\Sigma \in \mathbb{R}^{d \times d}_{\spd}$. 
    
    \noindent \underline{Proof of the upper bound for the stochastic error.} Combining the upper bounds on \eqref{eq:EWT4Proj}--\eqref{eq:EWT9Proj} we find 
    \begin{align*}
        &\EWest{\big\Vert \big(\hat{\Sigma}_{>r}^{kh} - \EWest{\hat{\Sigma}_{>r}^{kh}} \big)\big\Vert^2}^{1/2} \\
        &\qquad\lesssim \left(\big(\tr_{>r}\big(\Sigma_{\avg}^{kh}\big)\big)^2 + L_b^4 n^{-2}h^{4 \beta_b} + L_b^2 n^{-1} h^{2\beta_b} \tr_{>r}\big(\Sigma_{\avg}^{kh}\big)\right)^{1/2} \\
         &\qquad\phantom{=}+ \left(n^{-2}h^{-4} \left(h^2 \big(\tr_{>r} \big(\Sigma_{\avg}^{kh}\big)\big)^2 + L_b^4 h^{4 (\beta_b +1)} + L_b^2 h^{2 \beta_b + 3} \tr_{>r}\big(\Sigma_{\avg}^{kh}\big)\right)\right)^{1/2},
    \end{align*}
    and by applying the Binomial theorem, it follows that 
    \begin{align*}
       \EWest{\big\Vert \hat{\Sigma}_{>r}^{kh} - \EWest{\hat{\Sigma}_{>r}^{kh}}\big\Vert^2}^{1/2} 
        &\lesssim \big(1 + (nh)^{-1}\big) \tr_{>r}\big(\Sigma_{\avg}^{kh}\big) + L_b^2 n^{-1}h^{2 \beta_b}. 
    \end{align*}
    Using Assumption~\ref{ass:nh} and $\tr_{>r}(A) \leq (d-r) \lambda_{r+1}(A)$ for any matrix $A \in \mathbb{R}^{d \times d}$, yields
    \begin{align*}
        \EWest{\big\Vert \hat{\Sigma}_{>r}^{kh} - \EWest{\hat{\Sigma}_{>r}^{kh}}\big\Vert^2}^{1/2} 
        \lesssim \lambda_{r+1}\big(\Sigma_{\avg}^{kh}\big) + L_b^2 n^{-1}h^{2\beta_b}. 
    \end{align*}
    Therefore, we obtain 
    \begin{align*}
        \sum_{k=0}^{h^{-1}-1} h \EWest{\big\Vert \hat{\Sigma}_{>r}^{kh} - \EWest{\hat{\Sigma}_{>r}^{kh}}\big\Vert} &\leq \sum_{k=0}^{h^{-1}-1} h \left(\EWest{\big\Vert \hat{\Sigma}_{>r}^{kh} - \EWest{\hat{\Sigma}_{>r}^{kh}} \big\Vert^2}\right)^{1/2} \\
        &\lesssim \bigg(\sum_{k=0}^{h^{-1}-1} h \lambda_{r+1}\big(\Sigma_{\avg}^{kh}\big)\bigg) + L_b^2n^{-1}h^{2\beta_b}, 
    \end{align*}
    which completes the proof. 
\qed

\section{Auxiliary Results for the Proof of Theorem~\ref{thm:h1}} \label{a:alternative}
\begin{lemma} \label{lem:decomph1}
    Let $(\Sigma,b) \in \mathcal{H}_1(r, \beta_b, L_b, h_n, v, C)$, where $r \in \ldbrack 0, d-1\rdbrack$, $\beta_b \in(0,1]$, $L_b \geq 0$, $(h_n)_{n \in \mathbb{N}} \subset (0,1)$ and $v,C > 0$. Then we have
    \begin{align}
        \PPind{(\Sigma,b)}{\varphi_{\alpha, n} = 1} 
        &\geq 1 - \bigg[\mathbb{P}_{(\Sigma,b)}\bigg(\sum_{k=0}^{h_n^{-1}-1} \lambda_{r+1}\bigg(\sum_{i=1}^{nh_n} \big(\Delta \tilde{X}_{ik}\big)^{\otimes 2}\bigg) \leq \kappa_{\alpha, n} + \tilde{\kappa}_{\alpha, n}\bigg) \notag \\
        &\qquad \qquad \qquad \qquad \qquad \qquad \qquad + \mathbb{P}_{(\Sigma,b)}\bigg(\sum_{k=0}^{h_n^{-1}-1} h_n \big\Vert \mathcal{R}^{kh_n} \big\Vert \geq \tilde{\kappa}_{\alpha, n}\bigg)\bigg], \notag 
    \end{align}
    where $\kappa_{\alpha, n}, \tilde{\kappa}_{\alpha,n}>0$, and for any $k \in \ldbrack 0, h_n^{-1}-1 \rdbrack$ we define
    $$\mathcal{R}^{kh_n} := n\left(\overline{\Delta \tilde{X}_{k}}\right)^{\otimes 2} - h_n^{-1} \sum_{i=1}^{nh_n}\left(\Delta \tilde{X}_{ik} \odot \left(B_{ik}- \overline{B_k}\right)\right). $$
\end{lemma}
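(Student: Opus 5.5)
The plan is to write the unnormalised block estimator in terms of the Gaussian increments plus a drift perturbation, and then apply Weyl's inequality and a union bound. Fix a block $k$ and write $\Delta X_{ik}=\Delta\tilde X_{ik}+B_{ik}$, so that $\Delta X_{ik}-\overline{\Delta X_k}=(\Delta\tilde X_{ik}-\overline{\Delta\tilde X_k})+(B_{ik}-\overline{B_k})$. Expanding the square gives
\begin{align*}
h_n\hat\Sigma^{kh_n}=\sum_{i=1}^{nh_n}\big(\Delta\tilde X_{ik}\big)^{\otimes 2}-nh_n\big(\overline{\Delta\tilde X_k}\big)^{\otimes 2}+\sum_{i=1}^{nh_n}\big(\Delta\tilde X_{ik}-\overline{\Delta\tilde X_k}\big)\odot\big(B_{ik}-\overline{B_k}\big)+\sum_{i=1}^{nh_n}\big(B_{ik}-\overline{B_k}\big)^{\otimes 2}.
\end{align*}
Because $\sum_i(B_{ik}-\overline{B_k})=0$, the cross term containing $\overline{\Delta\tilde X_k}$ vanishes, and the mixed term becomes $\sum_i\Delta\tilde X_{ik}\odot(B_{ik}-\overline{B_k})$. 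The last sum is positive semi-definite. Up to the normalisation convention in the statement, the remaining perturbation is exactly $-h_n\mathcal{R}^{kh_n}$. (I will check the powers of $h_n$ and $n$ against the definition of $\mathcal{R}^{kh_n}$; the convention may require matching $T_{n,h}$ with the unnormalised sum, and I will follow it.)

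Next I use monotonicity of eigenvalues under the Loewner order, dropping the positive semi-definite drift square, together with Weyl's inequality $\lambda_{r+1}(A+E)\ge\lambda_{r+1}(A)-\Vert E\Vert$. This gives, for every $k$,
\[
h_n\lambda_{r+1}\big(\hat\Sigma^{kh_n}\big)\ \ge\ \lambda_{r+1}\Big(\sum_{i=1}^{nh_n}(\Delta\tilde X_{ik})^{\otimes2}\Big)-h_n\big\Vert\mathcal{R}^{kh_n}\big\Vert .
\]
Summing over $k$ yields a lower bound for $T_{n,h_n}$, namely the Gaussian statistic $S_n:=\sum_k\lambda_{r+1}(\sum_i(\Delta\tilde X_{ik})^{\otimes2})$ minus $R_n:=\sum_k h_n\Vert\mathcal{R}^{kh_n}\Vert$.

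Finally I argue with events. If $S_n>\kappa_{\alpha,n}+\tilde\kappa_{\alpha,n}$ and $R_n<\tilde\kappa_{\alpha,n}$, then $T_{n,h_n}>\kappa_{\alpha,n}$, so $\varphi_{\alpha,n}=1$. Passing to complements and applying the union bound (Bonferroni) gives $\mathbb{P}(\varphi_{\alpha,n}=0)\le\mathbb{P}(S_n\le\kappa_{\alpha,n}+\tilde\kappa_{\alpha,n})+\mathbb{P}(R_n\ge\tilde\kappa_{\alpha,n})$, which is the claim.

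The only real obstacle is the bookkeeping in the first step. I need the cancellation $\sum_i(B_{ik}-\overline{B_k})=0$ to eliminate the $\overline{\Delta\tilde X_k}$ cross term, and I need the scaling factors in $\mathcal{R}^{kh_n}$ to match the normalisation of $\hat\Sigma^{kh_n}$. Everything else is a direct application of Weyl's inequality and the union bound.
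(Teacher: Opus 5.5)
Your proposal is correct and follows the paper's proof essentially step for step. Both arguments use the same decomposition, in which $\sum_i(B_{ik}-\overline{B_k})=0$ removes the mixed term involving $\overline{\Delta\tilde X_k}$. Both then drop the positive semi-definite drift square by Loewner monotonicity, apply Weyl's inequality, and finish with the Bonferroni (union) bound.

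The scaling you flag checks out exactly: $h_n\hat\Sigma^{kh_n} = \sum_i(\Delta\tilde X_{ik})^{\otimes 2} - h_n\mathcal{R}^{kh_n} + \sum_i(B_{ik}-\overline{B_k})^{\otimes 2}$. So no caveat about the normalisation is needed.
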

\begin{proof}
    The proof relies primarily on Weyl's inequality and Bonferroni's inequality. For details, see Appendix~\ref{proof:decomph1}.
\end{proof}
We now derive an upper bound for the first probability appearing in the previous lemma. The constants $C_{d, r+1} > 1$ are chosen as in Corollary S.9 of \cite{ReissWinkelmann2023}.
\begin{lemma}\label{lem:alternativeMR}
    Consider for sample size $n$ the test 
    $$
        \tilde{\varphi}_{\alpha,n} := \indikator \bigg(\sum_{k=0}^{h_n^{-1}-1} \lambda_{r+1}\bigg( \sum_{i=1}^{nh_n} \big(\Delta \tilde{X}_{ik}\big)^{\otimes 2}\bigg) \geq \check{\kappa}_{\alpha,n} \bigg)
    $$ 
    with block sizes $h_n$ and some critical values $\check{\kappa}_{\alpha,n}$. Assume $h_n \rightarrow 0$ as $n \rightarrow \infty$ and that the number of increments per block satisfies $nh_n \geq 2(r+1)C_{d,r+1}$ with a fixed constant $C_{d,r+1} > 1$ only depending on $d$ and $r+1$. Then, $\tilde{\varphi}_{\alpha,n}$ is uniformly consistent over the local alternatives $\mathcal{H}_1(r, \beta_b, L_b, \hbar_n, v_n, C)$, provided $\hbar_n/h_n \rightarrow \infty$, and the rate $v_n$ is larger than a constant multiple of $\check{\kappa}_{\alpha,n}$: 
    $$
        \exists \delta > 0: \lim_{n \rightarrow \infty} \ \inf \left\{ \PPind{(\Sigma,b)}{\tilde{\varphi}_{\alpha,n} = 1} \left| \thinspace (\Sigma,b) \in \mathcal{H}_1(r, \beta_b, L_b, \hbar_n, \delta \check{\kappa}_{\alpha,n}, C) \right. \right\} = 1.
    $$
\end{lemma}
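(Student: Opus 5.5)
The key observation is that $\tilde{\varphi}_{\alpha,n}$ depends only on the drift-free increments $\Delta \tilde{X}_{ik} = \int_{I_{ik}} \sigma(t) \inte W(t)$. These are independent centred Gaussians with covariances $\int_{I_{ik}} \Sigma(t)\inte t$, so the drift $b$ does not enter the law of the statistic at all. The lemma is therefore the drift-free power result of \cite{ReissWinkelmann2023}, and I would reduce to it rather than reprove it.

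The plan is as follows. First, for $(\Sigma,b)\in\mathcal{H}_1(r,\beta_b,L_b,\hbar_n,v_n,C)$, I extract an interval $I$ with $|I|\ge \hbar_n$ and $|I|\min_{t\in I}\lambda_{r+1}(\Sigma(t))\ge v_n$. Since $\hbar_n/h_n\to\infty$, $I$ contains at least $|I|/h_n-2\gtrsim |I|/h_n$ complete blocks $I_k$. On each such block, the variational characterisation (Ky Fan/Courant--Fischer) gives
\[
\lambda_{r+1}\Big(\int_{I_{ik}}\Sigma(t)\inte t\Big)\ge n^{-1}\min_{t\in I}\lambda_{r+1}(\Sigma(t)).
\]
Second, I lower bound the random eigenvalue $\lambda_{r+1}(\sum_{i}(\Delta\tilde X_{ik})^{\otimes2})$ in probability. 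I write $\Delta\tilde X_{ik}=(\int_{I_{ik}}\Sigma)^{1/2}Z_{ik}$ with $Z_{ik}$ i.i.d.\ standard normal. Restricting to the $(r+1)$-dimensional top eigenspace of each summand would be awkward because the eigenspaces vary with $i$. Instead I invoke Corollary S.9 of \cite{ReissWinkelmann2023}, which is where the constant $C_{d,r+1}$ and the condition $nh_n\ge 2(r+1)C_{d,r+1}$ come from. It supplies a uniform lower bound of the form
\[
\mathbb{P}\Big(\lambda_{r+1}\big(\textstyle\sum_{i=1}^{nh}(\Delta\tilde X_{ik})^{\otimes2}\big)\ge c\,h\min_{t\in I_k}\lambda_{r+1}(\Sigma(t))\Big)\ge p>0
\]
for a block $k$ of $nh\ge 2(r+1)C_{d,r+1}$ independent Gaussian increments, with $c,p$ depending only on $d,r$.

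Third, the block statistics are independent across $k$. Counting the blocks in $I$ where this event holds (a binomial variable with at least $\gtrsim|I|/h_n\to\infty$ trials and success probability $\ge p$), Chebyshev or Hoeffding gives, with probability tending to one uniformly,
\[
\sum_k \lambda_{r+1}\Big(\sum_i(\Delta\tilde X_{ik})^{\otimes 2}\Big)\ge \tfrac{p}{2}\,c\,|I|\min_{t\in I}\lambda_{r+1}(\Sigma(t))\ge \tfrac{pc}{2}v_n.
\]
Choosing $\delta > 2/(pc)$ with $v_n=\delta\check\kappa_{\alpha,n}$ then exceeds $\check\kappa_{\alpha,n}$. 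Uniformity holds because all constants are free of $\Sigma$ and $b$. The trace bound $C$ is not even needed for this step.

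The main obstacle is the second step: the small-ball/lower-tail control of $\lambda_{r+1}$ for sums of Gaussian rank-one matrices with time-varying covariance. If Corollary S.9 is only stated for the averaged covariance, I would first lower bound $\sum_i(\int_{I_{ik}}\Sigma)^{1/2}Z_{ik}Z_{ik}^\top(\int_{I_{ik}}\Sigma)^{1/2}$ via the quadratic form over an arbitrary $(r+1)$-dimensional subspace, and then apply an anti-concentration bound for Gaussian quadratic forms, as in that corollary.
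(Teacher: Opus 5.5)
Your reduction is the same as the paper's. The paper's proof only observes that $\tilde{\varphi}_{\alpha,n}$ involves the drift-free increments $\Delta\tilde X_{ik}$, whose law does not depend on $b$, and then invokes Theorem~3.5 of \cite{ReissWinkelmann2023}.

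Where you differ is in how you unpack that theorem. The argument the paper cites bounds the Laplace transform of the block eigenvalues, giving a deviation inequality for a triangular array of Wishart-type matrices; this is where $nh_n\ge 2(r+1)C_{d,r+1}$ enters. It then runs a Chernoff argument on the sum over the $\gtrsim \hbar_n/h_n$ independent blocks inside $I$. You instead extract a constant-probability lower bound for each block and count successful blocks with Hoeffding. Such a per-block bound follows from a Laplace-transform bound by a one-block Chernoff step, so your appeal to Corollary~S.9 is reasonable. Both routes give failure probabilities that are exponentially small in the number of blocks in $I$. Yours is more modular, since any per-block small-ball estimate suffices, but the thresholding costs a factor of order $p$ in $\delta$. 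The Chernoff route uses the Laplace bound directly on the aggregated statistic.

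One correction to your first step. The inequality $\lambda_{r+1}\big(\int_{I_{ik}}\Sigma(t)\,\mathrm{d}t\big)\ge n^{-1}\min_{t\in I}\lambda_{r+1}(\Sigma(t))$ is false as stated. Take $d=2$, $r=0$ and $\Sigma(t)=u(t)u(t)^\top$, where the unit vector $u(t)$ turns through half a revolution on $I_{ik}$. Then the left side equals $(2n)^{-1}$ while the right side equals $n^{-1}$. What the variational characterisation actually gives is concavity of $\tr_{>r}$ (Ky Fan) together with $\lambda_{r+1}\ge \tr_{>r}/(d-r)$. This yields the bound with an extra factor $(d-r)^{-1}$, which is harmless because it is absorbed into $\delta$.
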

\begin{proof}
    For a proof, we refer to \cite[Theorem 3.5]{ReissWinkelmann2023}, where the alternative must be adjusted accordingly. Moreover, the estimator considered there corresponds in our setting to $h_n^{-1} \sum_{i=1}^{nh_n} (\Delta \tilde{X}_{ik})^{\otimes 2}$ with $k \in \ldbrack0, h_n^{-1}-1\rdbrack$. The proof relies on a Laplace transform result, which yields to a deviation inequality for a triangular scheme of Wishart matrices, and subsequently applies the classical Chernoff argument. 
\end{proof}
Next, we derive an upper bound for the second probability appearing in Lemma~\ref{lem:decomph1}. 
\begin{lemma}\label{lem:alternative2ndterm}
    Assume that $b$ satisfies Assumption~\ref{ass:hölderad} and suppose that the sequence
    $(h_n)_{n \in \mathbb{N}} \subset (0,1) $ tends to zero as $n \rightarrow \infty$. Let $(c_{n})_{n \in \mathbb{N}}$ be a sequence such that $c_{n} \rightarrow \infty$ for $n \rightarrow \infty$. Further, 
    let $(\Sigma,b) \in \mathcal{H}_1(r, \beta_b, L_b, h_{n}, v, C)$, $r \in \ldbrack 0, d-1\rdbrack$, $\beta_b \in(0,1]$, $L_b \geq 0$ and $v, C > 0$. Set $\overline{\Sigma} := \max_{k \in \ldbrack 0, h_n^{-1}-1 \rdbrack} \tr(\int_{I_k} \Sigma(t) \inte t)$ and 
    $$
        \tilde{\kappa}_{\alpha,n}(\Sigma) := c_{n}\big(\overline{\Sigma} n^{-1}h_{n}^{-2} + 2 L_b \overline{\Sigma}^{1/2}n^{-1/2} h_n^{\beta_b-1/2} \big).
    $$ 
    We get 
    $$ \sup_{(\Sigma, b) \in \mathcal{H}_1(r, \beta_b, L_b, h_n, v, C)}\mathbb{P}_{(\Sigma,b)}\bigg(\sum_{k=0}^{h_n^{-1}-1} h_n \big\Vert \mathcal{R}^{kh_n} \big\Vert \geq \tilde{\kappa}_{\alpha,n}(\Sigma)\bigg) \rightarrow 0$$
    for $n \rightarrow \infty$.
\end{lemma}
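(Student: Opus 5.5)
By Markov's inequality it suffices to show that
$$\sup_{(\Sigma,b)}\tilde{\kappa}_{\alpha,n}(\Sigma)^{-1}\,\mathbb{E}_{(\Sigma,b)}\Big[\sum_{k} h_n\Vert\mathcal{R}^{kh_n}\Vert\Big]\rightarrow 0 .$$
Since $c_n\to\infty$, the goal is to bound this expectation by a constant multiple of $\overline{\Sigma}n^{-1}h_n^{-2}+L_b\overline{\Sigma}^{1/2}n^{-1/2}h_n^{\beta_b-1/2}$, with a constant depending only on $d$. I would split $\mathcal{R}^{kh_n}$ by the triangle inequality into the quadratic part $n(\overline{\Delta\tilde X_k})^{\otimes2}$ and the cross part $h_n^{-1}\sum_i\Delta\tilde X_{ik}\odot(B_{ik}-\overline{B_k})$, and handle the two terms separately.

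\textbf{Quadratic part.} Here $\Vert n(\overline{\Delta\tilde X_k})^{\otimes2}\Vert=n\Vert\overline{\Delta\tilde X_k}\Vert^2$. Because $nh_n\overline{\Delta\tilde X_k}\sim\mathcal N(0,\int_{I_k}\Sigma(t)\inte t)$, its expectation equals $n(nh_n)^{-2}\tr(\int_{I_k}\Sigma)\le n^{-1}h_n^{-2}\overline{\Sigma}$. Summing $h_n\cdot(\cdot)$ over the $h_n^{-1}$ blocks gives at most $\overline{\Sigma}n^{-1}h_n^{-2}$. This is the first summand of $\tilde\kappa_{\alpha,n}(\Sigma)$ with $c_n$ removed. (One could sharpen this via $\sum_k\tr\int_{I_k}\Sigma\le C$, but it is not needed.)

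\textbf{Cross part.} We have $\Vert v\odot w\Vert\le 2\Vert v\Vert\Vert w\Vert$, so after Cauchy--Schwarz in expectation its contribution is bounded by
$$2\sum_i\mathbb{E}[\Vert\Delta\tilde X_{ik}\Vert^2]^{1/2}\,\mathbb{E}[\Vert B_{ik}-\overline{B_k}\Vert^2]^{1/2}.$$
By \eqref{ineq:BbarB} the second factor is at most $L_bn^{-1}h_n^{\beta_b}$, and the first factor is $(\tr\int_{I_{ik}}\Sigma)^{1/2}$. A further Cauchy--Schwarz over $i$ gives
$$\sum_i(\tr\textstyle\int_{I_{ik}}\Sigma)^{1/2}\le(nh_n)^{1/2}\overline{\Sigma}^{1/2}.$$
Hence each block contributes at most $h_n\cdot h_n^{-1}\cdot 2L_bn^{-1}h_n^{\beta_b}(nh_n)^{1/2}\overline{\Sigma}^{1/2}=2L_b\overline{\Sigma}^{1/2}n^{-1/2}h_n^{\beta_b+1/2}$. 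Summing over the $h_n^{-1}$ blocks yields $2L_b\overline{\Sigma}^{1/2}n^{-1/2}h_n^{\beta_b-1/2}$, which is exactly the second summand of $\tilde\kappa_{\alpha,n}(\Sigma)$ without $c_n$. The drift is only adapted and hence not independent of $W$. This causes no difficulty, because only Cauchy--Schwarz is used and no independence is required. This is the reason for bounding the expectation crudely rather than exploiting cancellations, as was done in Proposition~\ref{prop:stocherrorh0ad}.

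\textbf{Conclusion.} Combining the two parts gives $\mathbb{E}[\sum_k h_n\Vert\mathcal{R}^{kh_n}\Vert]\le c_n^{-1}\tilde\kappa_{\alpha,n}(\Sigma)$. Markov's inequality then bounds the probability by $c_n^{-1}$ uniformly over the alternative class, and $c_n^{-1}\to0$. The main point to check is this uniformity. The ratio is bounded by $c_n^{-1}$ for every $(\Sigma,b)$, since both pieces of the bound scale exactly like the corresponding pieces of $\tilde\kappa_{\alpha,n}(\Sigma)$, so uniformity holds automatically. The only degenerate case is $\overline{\Sigma}=0$, where the threshold vanishes. In that case $\Delta\tilde X_{ik}=0$ almost surely, so $\mathcal{R}^{kh_n}=0$, and the event $\{0\ge0\}$ must be treated either by a strict inequality or by noting that the trace bound $\tr\int_0^1\Sigma\le C$ together with $\lambda_{r+1}$ bounded below on an interval forces $\overline{\Sigma}>0$. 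I expect this bookkeeping, rather than any estimate, to be the only delicate step.
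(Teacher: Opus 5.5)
Your proposal is correct and follows the paper's proof essentially step by step. You split $\mathcal{R}^{kh_n}$ by the triangle inequality, use the Gaussian trace identity for $n\Vert\overline{\Delta\tilde X_k}\Vert^2$, and bound the cross term via Cauchy--Schwarz, $\mathbb{E}\Vert B_{ik}-\overline{B_k}\Vert^2\le L_b^2n^{-2}h_n^{2\beta_b}$ and Cauchy--Schwarz/Jensen over $i$; Markov's inequality then gives the uniform bound $c_n^{-1}$. Your remark that $\overline{\Sigma}>0$ on $\mathcal{H}_1$ (forced by $v>0$) is a small point of care the paper leaves implicit.
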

\begin{proof}
    The proof is based on an application of Markov's inequality. We first decompose the corresponding expectation. To derive suitable upper bounds for the terms arising from the decomposition of the corresponding expectation, we exploit the H\"older-regularity of the process term $b$, Jensen's inequality, and Fubini's theorem, together with standard properties of centred multivariate Gaussian increments and the trace. See Appendix~\ref{proof:alternative2ndterm} for a detailed proof. 
\end{proof}
\subsection{Proof of Lemma~\ref{lem:decomph1}} \label{proof:decomph1}
    Let $k \in \ldbrack 0, h_n^{-1}-1 \rdbrack$. We have the following decomposition
    $$
        \hat{\Sigma}^{kh_n} = h_n^{-1} \sum_{i=1}^{nh_n}\big(\Delta \tilde{X}_{ik}\big)^{\otimes 2} - \tilde{\mathcal{R}}^{kh_n},
    $$
    where 
    \begin{align*}
        \tilde{\mathcal{R}}^{kh_n} &:= n\left(\overline{\Delta \tilde{X}_{k}}\right)^{\otimes 2} - h_n^{-1} \sum_{i=1}^{nh_n} \left(B_{ik} - \overline{B_k}\right)^{\otimes 2} - h_n^{-1} \sum_{i=1}^{nh_n}\left(\Delta \tilde{X}_{ik} \odot \left(B_{ik}- \overline{B_k}\right)\right).
    \end{align*}
    Since $h_n^{-1} \sum_{i=1}^{nh_n} (B_{ik} - \overline{B_k})^{\otimes 2} \succcurlyeq 0$, it follows that
    $$
        \hat{\Sigma}^{kh_n} \succcurlyeq h_n^{-1} \sum_{i=1}^{nh_n}\big(\Delta \tilde{X}_{ik}\big)^{\otimes 2} - \mathcal{R}^{kh_n},
    $$
    where $\mathcal{R}^{kh_n} := \tilde{\mathcal{R}}^{kh_n} + h_n^{-1} \sum_{i=1}^{nh_n} (B_{ik} - \overline{B_k})^{\otimes 2}$. 
    Using Weyl's inequality \citep[see e.g.][Theorem 3.2.1]{BhatiaMatrixAnalysis}, we obtain 
    $$
        \lambda_{r+1}\big(\hat{\Sigma}^{kh_n}\big) \geq \lambda_{r+1} \bigg(h_n^{-1} \sum_{i=1}^{nh_n} \big(\Delta \tilde{X}_{ik}\big)^{\otimes 2} - \mathcal{R}^{kh_n}\bigg) \geq \lambda_{r+1}\bigg(h_n^{-1} \sum_{i=1}^{nh_n} \big(\Delta \tilde{X}_{ik}\big)^{\otimes 2}\bigg) - \left\Vert \mathcal{R}^{kh_n} \right\Vert.
    $$
    This yields, by applying Bonferroni's inequality,  
    \begin{align}
        &\PPind{(\Sigma,b)}{\varphi_{\alpha, n} = 0} = \mathbb{P}_{(\Sigma,b)}\bigg(\sum_{k=0}^{h_n^{-1}-1} h_n \lambda_{r+1}\big(\hat{\Sigma}^{kh_n}\big) \leq \kappa_{\alpha,n}\bigg) \notag \\
        &\qquad\leq \mathbb{P}_{(\Sigma,b)}\bigg(\sum_{k=0}^{h_n^{-1}-1} h_n \bigg(\lambda_{r+1}\bigg(h_n^{-1} \sum_{i=1}^{nh_n} \big(\Delta \tilde{X}_{ik}\big)^{\otimes 2}\bigg) - \left\Vert\mathcal{R}^{kh_n} \right\Vert\bigg) \leq \kappa_{\alpha,n}\bigg) \notag\\
        &\qquad\leq \mathbb{P}_{(\Sigma,b)}\bigg(\sum_{k=0}^{h_n^{-1}-1} h_n \lambda_{r+1}\bigg(h_n^{-1} \sum_{i=1}^{nh_n} \big(\Delta \tilde{X}_{ik}\big)^{\otimes 2}\bigg) \leq \kappa_{\alpha,n} + \tilde{\kappa}_{\alpha,n}\bigg) + \mathbb{P}_{(\Sigma,b)}\bigg(\sum_{k=0}^{h_n^{-1}-1} h_n \left\Vert \mathcal{R}^{kh_n} \right\Vert \geq \tilde{\kappa}_{\alpha,n}\bigg), \notag
    \end{align}
    where $\kappa_{\alpha,n}, \tilde{\kappa}_{\alpha,n} > 0$, and we conclude 
    \begin{align*}
        \PPind{(\Sigma,b)}{\varphi_{\alpha,n} = 1}
        &\geq 1 - \bigg[\mathbb{P}_{(\Sigma,b)}\bigg(\sum_{k=0}^{h_n^{-1}-1} h_n \lambda_{r+1}\bigg(h_n^{-1} \sum_{i=1}^{nh_n} \big(\Delta \tilde{X}_{ik}\big)^{\otimes 2}\bigg) \leq \kappa_{\alpha, n} + \tilde{\kappa}_{\alpha, n}\bigg) \\
        &\quad\qquad\qquad\qquad\qquad\qquad\qquad\qquad\qquad+ \mathbb{P}_{(\Sigma,b)}\bigg(\sum_{k=0}^{h_n^{-1}-1} h_n \left\Vert \mathcal{R}^{kh_n} \right\Vert \geq \tilde{\kappa}_{\alpha, n}\bigg)\bigg],
    \end{align*}
    which completes the proof. 
\qed

\subsection{Proof of Lemma~\ref{lem:alternative2ndterm}} \label{proof:alternative2ndterm}
    Let $(\Sigma,b) \in \mathcal{H}_1(r, \beta_b, L_b, \hbar_{n}, v, C) =: \mathcal{H}_1$, $r \in \ldbrack 0, d-1\rdbrack$, $\beta_b \in(0,1]$, $L_b \geq 0$, $(\hbar_n)_{n \in \mathbb{N}} \subset (0,1)$ and $v, C > 0$. Using the triangle inequality and the norm properties, we get 
    \begin{align}
        &\sup_{(\Sigma,b) \in \mathcal{H}_1}\mathbb{P}_{(\Sigma,b)}\bigg(\sum_{k=0}^{h_n^{-1}-1} h_n \left\Vert \mathcal{R}^{kh_n} \right\Vert \geq \tilde{\kappa}_{\alpha,n}(\Sigma)\bigg) \notag \\
        &\qquad\leq \sup_{(\Sigma,b) \in \mathcal{H}_1}\mathbb{P}_{(\Sigma,b)}\bigg(\sum_{k=0}^{h_n^{-1} - 1}\bigg[(nh_n)^{-1}  \bigg\Vert \sum_{i=1}^{nh_n} \Delta \tilde{X}_{ik} \bigg\Vert^2 + 2 \sum_{i=1}^{nh_n} \big\Vert \Delta \tilde{X}_{ik} \big\Vert  \left\Vert B_{ik} - \overline{B_k}\right\Vert \bigg]\geq \tilde{\kappa}_{\alpha,n}(\Sigma)\bigg). \label{eq:h1absch1}
    \end{align}
    \indent To apply Markov's inequality, we first compute the expectations separately. Let $i \in \ldbrack 1, nh_n\rdbrack$ and $k \in \ldbrack 0, h_n^{-1}-1 \rdbrack$. Since $\Delta \tilde{X}_{ik} \sim \mathcal{N}(0,\int_{I_{ik}}\Sigma(t) \inte t)$, and hence $\sum_{i=1}^{nh_n} \Delta \tilde{X}_{ik} \sim \mathcal{N}(0,\int_{I_{k}}\Sigma(t) \inte t)$, we have 
    \begin{align}
        (nh_n)^{-1} \sum_{k=0}^{h_n^{-1}-1} \mathbb{E}_{(\Sigma, b)}\bigg[\bigg\Vert \sum_{i=1}^{nh_n} \Delta \tilde{X}_{ik}\bigg\Vert^2\bigg] &= (nh_n)^{-1} \sum_{k=0}^{h_n^{-1}-1} \tr\bigg(\int_{I_{k}} \Sigma(t) \inte t \bigg) \notag \\
        &\leq n^{-1}h_n^{-2} \max_{k \in \ldbrack 0, h_n^{-1}-1\rdbrack}\tr\bigg( \int_{I_k} \Sigma(t) \inte t \bigg). \label{eq:h1absch2}
    \end{align}
    \indent Next, we observe, by applying Jensen's inequality, Fubini's theorem and the H\"older-regularity condition (compare Assumption~\ref{ass:hölderad}), 
    \begin{align}
        &\EWind{(\Sigma, b)}{ \left\Vert B_{ik} - \overline{B_k} \right\Vert^2} \notag \\
        &\qquad \leq (nh_n)^{-1} \int_{I_{ik}} \left(\int_{I_{k}} \mathbb{E}_{(\Sigma, b)}\left[\left\Vert b(s) - b(t) \right\Vert^2\right] \inte t \right) \inte s \leq 
        L_b^2 n^{-2} h_n^{2 \beta_b}. \label{eq:h1absch3}
    \end{align}
    \indent For the second summand, we obtain, using the linearity of the expectation and the Cauchy--Schwarz inequality, 
    \begin{align}
        &\mathbb{E}_{(\Sigma, b)}\bigg[ \sum_{k=0}^{h_n^{-1}-1} \sum_{i=1}^{nh_n} \big\Vert \Delta \tilde{X}_{ik} \big\Vert \left\Vert B_{ik} - \overline{B_{k}} \right\Vert\bigg] \notag \\
        &\qquad\leq \sum_{k=0}^{h_n^{-1}-1} \sum_{i=1}^{nh_n} \left(\EWind{(\Sigma, b)}{ \big\Vert \Delta \tilde{X}_{ik} \big\Vert^2}\right)^{1/2} \left(\EWind{(\Sigma, b)}{\left\Vert B_{ik} - \overline{B_{k}} \right\Vert^2}\right)^{1/2}. \label{eq:h1absch4}
    \end{align}
    Combining \eqref{eq:h1absch3} and \eqref{eq:h1absch4}, and using $\Delta \tilde{X}_{ik} \sim \mathcal{N}(0,\int_{I_{ik}} \Sigma(t) \inte t)$ yields
    \begin{align}
        &\mathbb{E}_{(\Sigma, b)}\bigg[2 \sum_{k=0}^{h_n^{-1}-1} \sum_{i=1}^{nh_n} \big\Vert \Delta \tilde{X}_{ik} \big\Vert \left\Vert B_{ik} - \overline{B_{k}} \right\Vert\bigg] \notag \\
        &\qquad\leq 2 L_b n^{-1} h_n^{\beta_b} \sum_{k=0}^{h_n^{-1}-1} \sum_{i=1}^{nh_n} \bigg( \tr\bigg( \int_{I_{ik}} \Sigma(t) \inte t \bigg)\bigg)^{1/2}. \notag 
    \end{align}
    Jensen's inequality now leads to
    \begin{align}
        &\mathbb{E}_{(\Sigma, b)}\bigg[2 \sum_{k=0}^{h_n^{-1}-1} \sum_{i=1}^{nh_n} \big\Vert \Delta \tilde{X}_{ik} \big\Vert \left\Vert B_{ik} - \overline{B_k} \right\Vert\bigg] \notag \\  
        &\qquad\leq 2 L_b n^{-1/2} h_n^{\beta_b + 1/2} \sum_{k=0}^{h_n^{-1}-1} \bigg(\tr\bigg(\int_{I_k} \Sigma(t) \inte t \bigg)\bigg)^{1/2} \notag \\
        &\qquad\leq 2 L_b n^{-1/2} h_n^{\beta_b - 1/2} \bigg(\max_{k \in \ldbrack 0, h_n^{-1}-1\rdbrack} \tr\bigg(\int_{I_k} \Sigma(t) \inte t \bigg)\bigg)^{1/2}. \label{eq:h1absch5}
    \end{align}
    \indent Defining $\overline{\Sigma} := \max_{k \in \ldbrack 0, h_n^{-1}-1 \rdbrack} \tr(\int_{I_k} \Sigma(t) \inte t)$, using Markov's inequality and combining \eqref{eq:h1absch1}, \eqref{eq:h1absch2} and \eqref{eq:h1absch5}, we get
    \begin{align}
        &\sup_{(\Sigma,b) \in \mathcal{H}_1} \mathbb{P}_{(\Sigma, b)}\bigg(\sum_{k=0}^{h_n^{-1}-1} h_n \big\Vert \mathcal{R}^{kh_n} \big\Vert \geq \tilde{\kappa}_{\alpha,n}(\Sigma)\bigg) \notag\\
        &\qquad\leq \sup_{(\Sigma,b) \in \mathcal{H}_1} (\tilde{\kappa}_{\alpha,n}(\Sigma))^{-1} \big( \overline{\Sigma} n^{-1}h_n^{-2} + 2 L_b \overline{\Sigma}^{1/2} n^{-1/2} h_n^{\beta_b - 1/2} \big). \notag
    \end{align}
    \indent Since we have $\tilde{\kappa}_{\alpha,n}(\Sigma) = c_{n}(\overline{\Sigma}  n^{-1}h_{n}^{-2} + 2 L_b \overline{\Sigma}^{1/2} n^{-1/2} h_n^{\beta_b - 1/2} )$ and $c_{n} \rightarrow \infty$ with $n \rightarrow \infty$, we conclude that 
    $$
        \sup_{(\Sigma, b) \in \mathcal{H}_1}\mathbb{P}_{(\Sigma, b)}\bigg(\sum_{k=0}^{h_n^{-1}-1} h_n \big\Vert \mathcal{R}^{kh_n} \big\Vert \geq \tilde{\kappa}_{\alpha,n}(\Sigma)\bigg) \rightarrow 0
    $$
    for $n \rightarrow \infty$.
\qed

\section{Useful inequalities}
\begin{lemma}\label{lem:NormDiffXiMean}
    Let $(\Omega, \mathcal{F}, \mathbb{P})$ be a probability space, and let $X_i: \Omega \rightarrow \mathbb{R}^{m}$, $i \in \ldbrack 1,n \rdbrack$, be random vectors such that $\EWt{\Vert X_i \Vert^2} < \infty$. Then we have 
    $$
        \mathbb{E}\bigg[\bigg\Vert X_i - n^{-1} \sum_{j=1}^n X_j \bigg\Vert^2\bigg] \leq 2 \bigg(\EW{\left\Vert X_i \right\Vert^2} + n^{-1} \sum_{j=1}^n \EW{\left\Vert X_j \right\Vert^2}\bigg).
    $$
\end{lemma}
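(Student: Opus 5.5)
Expand the square inside the norm and take expectations term by term. With $\bar X := n^{-1}\sum_{j=1}^n X_j$, the elementary inequality $\Vert a-c\Vert^2 \leq 2\Vert a\Vert^2 + 2\Vert c\Vert^2$ for $a,c\in\mathbb{R}^m$ gives, pointwise on $\Omega$,
\begin{align*}
    \Vert X_i - \bar X\Vert^2 \leq 2\Vert X_i\Vert^2 + 2\Vert \bar X\Vert^2 .
\end{align*}
This inequality follows from $0 \leq \Vert a + c\Vert^2$, equivalently from $2\langle a,c\rangle \leq \Vert a\Vert^2 + \Vert c\Vert^2$.

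It then remains to control $\EWt{\Vert \bar X\Vert^2}$. The map $x\mapsto \Vert x\Vert^2$ is convex and $\bar X$ is a convex combination of $X_1,\ldots,X_n$ with equal weights $n^{-1}$. Jensen's inequality for finite convex combinations (equivalently, Cauchy--Schwarz in the form $\Vert \sum_j X_j\Vert^2 \leq n\sum_j \Vert X_j\Vert^2$) therefore yields, pointwise,
\begin{align*}
    \Vert \bar X\Vert^2 \leq n^{-1}\sum_{j=1}^n \Vert X_j\Vert^2 .
\end{align*}

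Taking expectations in both displays and using linearity gives exactly the claimed bound. The moment assumption $\EWt{\Vert X_i\Vert^2}<\infty$ ensures that every expectation involved is finite. No independence or centring is needed, since both inequalities hold deterministically. There is no real obstacle here; the only point to keep straight is that the constant $2$ multiplies both terms, which the first display already delivers.
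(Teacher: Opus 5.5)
Your proposal is correct and follows essentially the paper's argument: the elementary bound $\Vert a-c\Vert^2 \leq 2\Vert a\Vert^2 + 2\Vert c\Vert^2$ followed by Jensen's inequality for the equal-weight average $\Vert \bar X\Vert^2 \leq n^{-1}\sum_{j=1}^n \Vert X_j\Vert^2$. The paper states it in scalar form $(a+b)^2 \leq 2(a^2+b^2)$ rather than in your vector form, which is an immaterial difference.
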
 
\begin{proof}
    From $(a+b)^2 \leq 2 (a^2 + b^2)$ for any $a,b \in \mathbb{R}$ and Jensen's inequality the claim follows. 
\end{proof}

\begin{lemma}\label{lem:HSEukl}
    Let $X_i \sim \mathcal{N}(0,\Sigma_i)$, $i \in\ldbrack 1, n \rdbrack$, be independent $d$-dimensional Gaussian random vectors. Define $X := (X_1^\top,\ldots, X_n^\top)^\top \in \mathbb{R}^{n \times d}$. Then we have 
    $$
        \EW{\big\Vert X^\top X\big\Vert_{\mathrm{HS}}^2} - \big\Vert\mathbb{E}\big[X^\top X\big]\big\Vert_{\mathrm{HS}}^2 \leq 2 \sum_{i=1}^n \left(\tr\left(\Sigma_i \right)\right)^2.
    $$
\end{lemma}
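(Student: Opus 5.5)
The plan is to expand the squared Hilbert--Schmidt norm as a double sum over blocks and use independence across $i$, so that only the diagonal terms $i=j$ contribute to the centred quantity. Write $X^\top X=\sum_{i=1}^n X_iX_i^\top$. Then
\begin{align*}
\EW{\HSt{X^\top X}^2}-\HSt{\EW{X^\top X}}^2=\sum_{i,j=1}^n\Big(\EW{\HSsct{X_iX_i^\top}{X_jX_j^\top}}-\HSsct{\Sigma_i}{\Sigma_j}\Big).
\end{align*}
For $i\neq j$, independence gives $\EWt{\HSsct{X_iX_i^\top}{X_jX_j^\top}}=\HSsct{\EWt{X_iX_i^\top}}{\EWt{X_jX_j^\top}}=\HSsct{\Sigma_i}{\Sigma_j}$. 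Hence all off-diagonal terms cancel exactly, and the left-hand side reduces to $\sum_{i=1}^n(\EWt{\Vert X_i\Vert^4}-\HSt{\Sigma_i}^2)$, using $\HSsct{vv^\top}{vv^\top}=\Vert v\Vert^4$.

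Next I compute the Gaussian fourth moment. Diagonalise $\Sigma_i=U\Lambda U^\top$, so that $\Vert X_i\Vert^2\overset{d}{=}\sum_l\lambda_l Z_l^2$ with independent standard normals $Z_l$. Then $\EWt{\Vert X_i\Vert^4}=\sum_{l,m}\lambda_l\lambda_m\EWt{Z_l^2Z_m^2}=(\tr\Sigma_i)^2+2\sum_l\lambda_l^2=(\tr\Sigma_i)^2+2\HSt{\Sigma_i}^2$. This is the same identity the paper already uses in the proof of \eqref{eq:EWT9Proj}. Each diagonal term therefore equals $(\tr\Sigma_i)^2+\HSt{\Sigma_i}^2$.

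The main obstacle is this last step. The diagonal term is $(\tr\Sigma_i)^2+\HSt{\Sigma_i}^2$, and bounding it requires $\HSt{\Sigma_i}\le\tr(\Sigma_i)$ for $\Sigma_i\in\mathbb{R}^{d\times d}_{\spd}$. That inequality follows from $\sum_l\lambda_l^2\le(\sum_l\lambda_l)^2$ for nonnegative eigenvalues. With it, each diagonal term is at most $2(\tr\Sigma_i)^2$, and summing over $i$ gives the claimed bound $2\sum_{i=1}^n(\tr\Sigma_i)^2$.

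Apart from that, the argument is an exact computation. The only care needed is to state the independence factorisation for the cross terms cleanly. Integrability is not an issue, since Gaussian vectors have finite fourth moments.
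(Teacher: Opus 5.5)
Your proof is correct and follows the paper's argument. Both reduce the centred quantity to $\sum_{i=1}^n\big(\EWt{\Vert X_i\Vert^4}-\HSt{\Sigma_i}^2\big)$, evaluate the Gaussian fourth moment as $(\tr\Sigma_i)^2+2\HSt{\Sigma_i}^2$, and finish with $\HSt{\Sigma_i}\le\tr(\Sigma_i)$. The only difference is that you spell out the independence-based cancellation of the cross terms $i\neq j$, which the paper uses without comment.
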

\begin{proof}
    For any $i \in \ldbrack 1, n\rdbrack$, the fourth-order central moments of the multivariate normal distribution yield
    \begin{align*}
        \EW{\big\Vert X_i X_i^\top\big\Vert_{\mathrm{HS}}^2} = \EW{\Vert X_i \Vert^4} = \left(\tr\left(\Sigma_i\right)\right)^2 + 2\HS{\Sigma_i}^2.
    \end{align*}
    Therefore, we obtain
    \begin{align*}
        \EW{\big\Vert X^\top X\big\Vert_{\mathrm{HS}}^2} - \big\Vert\mathbb{E}\big[X^\top X\big]\big\Vert_{\mathrm{HS}}^2
        = \sum_{i=1}^{n} \left(\HS{\Sigma_{i}}^2 + \left(\tr\left(\Sigma_{i}\right)\right)^2\right)
        \leq 2 \sum_{i=1}^{n} (\tr(\Sigma_i))^2, 
    \end{align*}
    where in the last step is the fact used that $\HSt{A}^2 \leq (\tr(A))^2$ for any $A \in \mathbb{R}_{\spd}^{d \times d}$. 
\end{proof}

\begin{lemma}\label{lem:hilfslemma1}
    Let $(\Omega, \mathcal{F}, \mathbb{P})$ be a probability space, and let $V_i, W_i : \Omega \rightarrow \mathbb{R}^{d}$, $i \in \ldbrack 1,n \rdbrack$, be random vectors such that $\EWt{\Vert V_i \Vert^4}, \EWt{\Vert W_i \Vert^4} < \infty$. Then we have
    $$
        \mathbb{E}\bigg[\bigg\Vert n^{-1} \sum_{i=1}^n \left(V_iW_i^{\top} - \EW{V_iW_i^{\top}}\right)\bigg\Vert^2\bigg] \leq n^{-1} \sum_{i=1}^n \left(\EW{\left\Vert V_i \right\Vert^4}\right)^{1/2} \left(\EW{\left\Vert W_i \right\Vert^4}\right)^{1/2}.
    $$
\end{lemma}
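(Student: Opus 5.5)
Write $Z_i := V_iW_i^\top - \EW{V_iW_i^\top}$, so that the left-hand side equals $n^{-2}\,\EW{\Vert \sum_{i=1}^n Z_i \Vert^2}$. The natural plan is to dominate the spectral norm by the Hilbert--Schmidt norm, $\Vert A\Vert \le \HSt{A}$, and use that $\HSt{\cdot}$ comes from the inner product $\HSsct{\cdot}{\cdot}$. Expanding the square gives
\begin{align*}
    \mathbb{E}\bigg[\HSgg{\sum_{i=1}^n Z_i}^2\bigg] = \sum_{i=1}^n \EW{\HSt{Z_i}^2} + \sum_{i\neq j} \EW{\HSsct{Z_i}{Z_j}}.
\end{align*}

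The cross terms have to vanish, and this is where structure is needed. The lemma as stated assumes nothing about dependence, so a literal proof is not available without an extra hypothesis: with $V_i=V$ and $W_i=W$ for all $i$ the left side does not decay like the right side in general. I would therefore add the hypothesis that is actually available where the lemma is applied (bound \eqref{eq:EWT6Proj}, via Corollary~\ref{cor:RVwithNormalMultiplikation}), namely that the $Z_i$ are pairwise uncorrelated in $\HSsct{\cdot}{\cdot}$. This holds for independent pairs $(V_i,W_i)$. It also holds when the $Z_i$ form a martingale difference sequence with respect to some filtration: for $i<j$ one conditions on the past before step $j$, and $\EW{\HSsct{Z_i}{Z_j}} = \EW{\HSsct{Z_i}{\EWcondt{Z_j}{\cdot}}}=0$. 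In the application, $W_i$ is the Gaussian increment, which is independent of the past, and one conditions on $V_i$. Under such a hypothesis only the diagonal remains.

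For the diagonal, $\EW{\HSt{Z_i}^2} = \EW{\HSt{V_iW_i^\top}^2} - \HSt{\EW{V_iW_i^\top}}^2 \le \EW{\HSt{V_iW_i^\top}^2}$, since a variance is bounded by the second moment. Because $\HSt{vw^\top}=\Vert v\Vert\,\Vert w\Vert$, this equals $\EW{\Vert V_i\Vert^2\Vert W_i\Vert^2}$. Cauchy--Schwarz then gives at most $\EW{\Vert V_i\Vert^4}^{1/2}\EW{\Vert W_i\Vert^4}^{1/2}$, and summing over $i$ produces $n^{-2}\sum_{i}(\cdots)$.

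This yields the stated right-hand side with $n^{-2}$ in place of $n^{-1}$, which is stronger since $n^{-2}\le n^{-1}$. Without the uncorrelatedness hypothesis, the fallback is Jensen, $\Vert n^{-1}\sum_i Z_i\Vert^2 \le n^{-1}\sum_i \Vert Z_i\Vert^2$, combined with the same diagonal estimate. This would recover exactly the stated $n^{-1}$ factor with no dependence assumptions at all, which makes it the cleanest route and probably the intended proof. The one remaining point is the centring step, $\EW{\Vert Z_i\Vert^2}\le \EW{\Vert V_iW_i^\top\Vert^2}$. For the spectral norm this is not an exact variance identity, so I would again pass through $\HSt{\cdot}$, where it is exact. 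This centring step is the only mild obstacle in the argument.
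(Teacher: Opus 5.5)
Your fallback argument is the paper's proof: Jensen gives $\Vert n^{-1}\sum_i Z_i\Vert^2\le n^{-1}\sum_i\Vert Z_i\Vert^2$, the centring is dropped, and Cauchy--Schwarz finishes. Your passage through $\HSt{\cdot}$, using $\Vert Z_i\Vert\le\HSt{Z_i}$ and $\HSt{V_iW_i^\top}=\Vert V_i\Vert\,\Vert W_i\Vert=\Vert V_iW_i^\top\Vert$, correctly justifies the centring step, which the paper states without comment and which is not automatic for the spectral norm.

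You should delete your opening claim that the lemma cannot be proved without a dependence hypothesis. The right-hand side $n^{-1}\sum_{i=1}^n(\cdots)$ is an average of $n$ terms and does not decay. So in your example $V_i=V$, $W_i=W$ both sides are of order one and the inequality holds. The uncorrelatedness detour is therefore unnecessary. Its martingale justification for the application is also doubtful, since $V_i$ depends on the adapted drift over the same interval $I_{ik}$ as the increment $W_i$, so conditioning on $V_i$ does not keep $W_i$ centred Gaussian.
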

\begin{proof}
    Jensen's inequality implies 
    \begin{align*}
        \mathbb{E}\bigg[\bigg\Vert n^{-1} \sum_{i=1}^n \left(V_iW_i^{\top} - \EW{V_iW_i^{\top}}\right)\bigg\Vert^2\bigg] &\leq n^{-1} \sum_{i=1}^n \EW{\left\Vert V_iW_i^{\top} - \EW{V_iW_i^{\top}}\right\Vert^2} \\
        &\leq n^{-1} \sum_{i=1}^{n} \mathbb{E}\left[\left\Vert V_i W_i^{\top} \right\Vert^2\right] \\
        &\leq n^{-1} \sum_{i=1}^n \left(\EW{\left \Vert V_i \right\Vert^4} \EW{\left\Vert W_i \right\Vert^4}\right)^{1/2},
    \end{align*}
    where the Cauchy--Schwarz inequality has been used in the last step. 
\end{proof}
\begin{cor}\label{cor:RVwithNormalMultiplikation}
    Let $(\Omega, \mathcal{F}, \mathbb{P})$ be a probability space, and let $V_i, W_i : \Omega \rightarrow \mathbb{R}^{d}$, $i \in \ldbrack 1,n \rdbrack$, be random vectors such that $\EWt{\Vert V_i \Vert^4} < \infty$ and $W_1,\ldots,W_n$ are independent $d$-dimensional Gaussian vectors with $W_i \sim \mathcal{N}(0,\Sigma_i)$ with $\Sigma_i \in \mathbb{R}^{d \times d}$. Then we have 
    \begin{align*}
        &\mathbb{E}\bigg[\bigg\Vert n^{-1} \sum_{i=1}^n \left(V_iW_i^{\top} - \EW{V_iW_i^{\top}}\right)\bigg\Vert^2\bigg] \leq n^{-1} \sum_{i=1}^n \left(\mathbb{E}\left[\left \Vert V_i \right\Vert^4\right]  \left(\big(\tr\left(\Sigma_i\right)\right)^2+ 2 \HS{\Sigma_i}^2\big)\right)^{1/2}.
    \end{align*}
\end{cor}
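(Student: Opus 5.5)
The Corollary follows from Lemma~\ref{lem:hilfslemma1} by specialising the fourth moment of the Gaussian factor $W_i$. I will not redo the Jensen and Cauchy--Schwarz steps. The right-hand side of Lemma~\ref{lem:hilfslemma1} contains, for each $i$, the product $\bigl(\EWt{\Vert V_i\Vert^4}\bigr)^{1/2}\bigl(\EWt{\Vert W_i\Vert^4}\bigr)^{1/2}$. So it suffices to compute $\EWt{\Vert W_i\Vert^4}$ for $W_i\sim\mathcal{N}(0,\Sigma_i)$ and substitute.

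For this fourth moment I would use the identity already used in the proof of Lemma~\ref{lem:HSEukl}, namely $\EWt{\Vert W_i\Vert^4}=(\tr(\Sigma_i))^2+2\HSt{\Sigma_i}^2$. To verify it, diagonalise $\Sigma_i=U\Lambda U^\top$ and write $\Vert W_i\Vert^2=\sum_j\lambda_j Z_j^2$ with $Z_j$ i.i.d.\ standard normal. Since $\EWt{Z_j^4}=3$, taking the expectation of the square gives $\sum_{j,l}\lambda_j\lambda_l+2\sum_j\lambda_j^2$. This equals $(\tr\Sigma_i)^2+2\HSt{\Sigma_i}^2$.

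Plugging this in and combining the two square roots into one yields exactly the stated bound. Independence of the $W_i$ and their Gaussianity enter only through this moment computation. Lemma~\ref{lem:hilfslemma1} needs no independence, so there is no real obstacle. The one point to check is that $\EWt{\Vert W_i\Vert^4}<\infty$, which holds automatically for Gaussian vectors, so the hypotheses of Lemma~\ref{lem:hilfslemma1} are met.
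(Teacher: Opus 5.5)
Your proposal is correct and is the same argument as the paper's. The paper likewise substitutes the Gaussian identity $\EWt{\Vert W_i\Vert^4}=(\tr(\Sigma_i))^2+2\HSt{\Sigma_i}^2$ into Lemma~\ref{lem:hilfslemma1} and combines the square roots; your diagonalisation check of that identity is a valid addition the paper omits. One wording correction: independence of the $W_i$ is not used anywhere, not even in the moment computation, since only the marginal law of each $W_i$ enters.
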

\begin{proof}
    As $X \sim \mathcal{N}(0,\Sigma)$ we have $\EWt{\Vert X \Vert^4} = (\tr(\Sigma))^2 + 2\HSt{\Sigma}^2$. This implies the claim by Lemma~\ref{lem:hilfslemma1}.
\end{proof}

\end{appendices}

\bibliographystyle{apalike2}
\bibliography{HF-RankTest}

\end{document}